\newcommand{\ve}{\varepsilon}
\newcommand\cL{{\mathcal L}}
\newcommand\bP{{\mathbb P}}
\newcommand\cC{{\mathcal C}}
\newtheoremstyle{special}%
{}%
{}%
{}%
{}%
{\scshape}%
{.}%
{.5em}%
{}
\newtheorem{theorem}{Theorem}
\newtheorem{lemma}[theorem]{Lemma}
\newtheorem{proposition}[theorem]{Proposition}
\newtheorem{remark}[theorem]{Remark}
\def\Id{\text{\rm Id}}
\DeclareMathOperator{\esssup}{esssup}
\DeclareMathOperator{\essinf}{essinf}
\begin{document}
	
	\title{Regularity of the variance in quenched CLT for random intermittent dynamical systems}
	\author{Davor Dragi\v cevi\' c
		\thanks{Faculty of Mathematics, University of Rijeka, Radmile Matej\v ci\' c 2, 51000 Rijeka, Croatia.\\ Email: ddragicevic@math.uniri.hr}
		\and
		Juho Lepp{\"a}nen \thanks{Department of Mathematics, Tokai University, Kanagawa 259-1292, Japan\\ Email: leppanen.juho.heikki.g@tokai.ac.jp}
	}

	\maketitle
	
	\begin{abstract}
		We study random dynamical systems composed of Liverani--Saussol--Vaienti maps with varying parameters, without any mixing assumptions on the base space of random dynamics. We establish a quenched central limit theorem and identify conditions under which the associated limit variance varies continuously and differentiably with respect to perturbations of the random dynamics. Our arguments rely on recent results on statistical stability and linear response for random intermittent maps established in Dragičević et al. (J. Lond. Math. Soc. 111 (2025), e70150).
	\end{abstract}

	\vspace{3mm}
	
	\noindent {\it Keywords}: linear response; random dynamical systems; intermittent maps; variance.
	\vspace{2mm}
	
	\noindent{\it 2020 MSC}: 37D25, 37H05
	
	\section{Introduction}
	
	In their seminal paper~\cite{LSV}, Liverani, Saussol and Vaienti introduced a now famous class of intermittent maps (called LSV maps) on the unit interval. 
	Trajectories of these nonuniformly expanding maps behave chaotically on a large part of
	the state space, but when they visit small neighborhoods of the neutral fixed point at the origin, it
	takes a long time before they return to the chaotic regime. We stress that the study of statistical properties of such maps turned out to be a challenging problem that resulted in the development of new techniques. For example, it turned out that such maps exhibit only polynomial decay of correlations, while uniformly expanding (or hyperbolic) maps possess exponential decay of correlations.  We refer to~\cite{alvesbook} for a detailed survey.
	
	In recent years, there has been a growing interest in the study of statistical properties of nonstationary (or nonautonomous) dynamical systems formed by sequential or random compositions of LSV maps with varying parameters. Notable contributions to their study include estimates on memory loss and decay of correlations (see, for example,~\cite{AHNTV, BBD, BBR, KL,KL2, leppanen2017functional}), various limit laws (see~\cite{ANT, BB, bose2021random, HL,  NPT, NTV,  Su, Su2} and references therein) and linear response~\cite{bahsoun2020linear, DGTS}.
	
	In this paper, we first establish the quenched central limit theorem (CLT) for random compositions of LSV maps 
	with varying parameters. Previous results on the quenched CLT in this setting include~\cite{NPT, NTV} for 
	i.i.d. compositions 
	and ~\cite{Su2} for ergodic compositions. Our theorem (see Theorem~\ref{qclt}), which applies to ergodic compositions, also shows
	an ``unexpected'' CLT in the spirit of  Gou\"{e}zel \cite{G}: for a class of special observables that vanish 
	at the neutral fixed point, the CLT holds in a parameter range where it is not expected for a 
	generic regular observable.
	We deduce the quenched CLT from an abstract result
	due to Kifer~\cite[Theorem 2.3.]{K} which, to our knowledge, has not previously been applied in this context. This result can be seen as a natural development in the recent line of research devoted to quenched limit theorems for broad classes of random dynamical systems that avoid any mixing assumptions for the base system (see~\cite{DFGTV, DFGTV2, DH, HK}).
	
	Our main objective is to study the regularity of the limit variance in Theorem~\ref{qclt} under perturbations of the random intermittent dynamics. More precisely, Theorem~\ref{contvariance} (resp. Theorem~\ref{diffvar}) gives conditions under which the variance varies continuously (resp. differentiably) under suitable perturbations. 
	Although both results consider perturbations of the same type, Theorem~\ref{contvariance} applies within the parameter range where correlations are summable, similar to the CLT in Theorem~\ref{qclt}, whereas Theorem~\ref{diffvar} requires a more restrictive parameter range.
	Consequently, Theorem~\ref{contvariance} is not implied by Theorem~\ref{diffvar}.
	
	We note that there exist several results in the literature devoted to the regularity of variance in the CLT for various classes of deterministic dynamics~\cite{BMR, BCV, GKLM, Selley}, among which the recent work of S\'elley~\cite{Selley} dealing with LSV maps is closest to the setting of our paper. In the context of random dynamics, the regularity of the variance in the quenched CLT was studied for random uniformly hyperbolic dynamics~\cite{DS}, certain examples of random partially hyperbolic systems~\cite{CN}, and for broad classes of nonuniformly 
	expanding random systems~\cite{DH2}, which do not include random LSV maps (see below for more details).
	
	Consequently, Theorem~\ref{diffvar} can be seen as a random counterpart of the main result of~\cite{Selley}, although we rely on completely different techniques. More precisely, the arguments in~\cite{Selley} rely on inducing, while our arguments are based on the recent statistical stability and linear response results for random intermittent maps established in~\cite{DGTS}, together with the arguments developed in the proof of~\cite[Theorem 12]{DH2}. However, we stress that there are nontrivial differences between the proof of~\cite[Theorem 12]{DH2} and Theorem~\ref{diffvar}, as~\cite[Theorem 12]{DH2} requires (and its proof uses) that the assumptions of~\cite[Theorem 2]{DH2} hold with $\mathcal B_w=C^0$, $\mathcal B_s=C^1$ and $\mathcal B_{ss}=C^3$. In our setting, \cite[(4)]{DH2} holds with $\mathcal B_w=L^1(m)$ (see~\eqref{dec} for $h=1$). The same comment applies for other conditions in~\cite[Theorem 2]{DH2}.

	Thus, it is necessary to combine the approach from~\cite{DH2} with the cone techniques used in the study of LSV maps initiated in~\cite{LSV}.
	
	We emphasize that the proof of Theorem~\ref{diffvar} actually requires a certain improvement of the linear response result of~\cite{DGTS} (see Theorem~\ref{sharpl}
	and Remark~\ref{sharplc}) and that contrary to~\cite{Selley} it yields an explicit formula for the derivative of the variance (see Remark~\ref{formula}). On the other hand, Theorem~\ref{diffvar} does require (in comparison with Theorems~\ref{qclt} and~\ref{contvariance}) an additional constraint for the range of parameter values of the LSV maps. Unfortunately, we currently do not see a possible way to eliminate this constraint. We refer the 
	reader to Remark \ref{formula} for further discussion on this issue.

	\paragraph{\textbf{Convention.}} Throughout this work $C$ and $D$ will denote positive constants whose precise values may change from one appearance to the next. Dependence on parameters such as $\alpha$ will be indicated with subscripts, e.g. $C=C_\alpha$, as needed.
	\section{Quenched central limit theorem for random LSV maps}\label{sec:clt}
	
	Throughout this paper, $(\Omega, \mathcal F, \mathbb P)$ is an arbitrary probability space equipped with an invertible $\mathbb P$-preserving transformation $\sigma \colon \Omega \to \Omega$. We assume that $\mathbb P$ is ergodic. Moreover, $m$ will denote the Lebesgue measure on the unit interval $[0, 1]$. We recall that for each
	$\gamma \in (0, 1)$ the associated Liverani-Saussol-Vaienti (LSV) map (introduced in~\cite{LSV}) is given by 
	\[
	T_\gamma(x)=\begin{cases}
		x(1+2^\gamma x^\gamma) & 0\le x<\frac 1 2 \\
		2x-1 & \frac 1 2 \le x\le 1.
	\end{cases}
	\]
	Moreover, for $\gamma \in (0, 1)$ and an observable $\phi$ we set
	\[
	N_\gamma\phi:= g_\gamma'\phi\circ g_\gamma,
	\]
	where $g_\gamma$ denotes the inverse of the restriction of $T_\gamma$ to $[0, \frac 1 2]$. Finally,  we will also denote $v_\gamma(x):=\partial_\gamma T_\gamma(x)=2^\gamma x^{1+\gamma}\log(2x)$ and \[
	X_\gamma:=v_\gamma\circ g_\gamma=2^\gamma g_\gamma^{1+\gamma}\log(2g_\gamma).
	\]
	%We note that we do not impose any mixing assumptions for $(\Omega, \mathcal F, \mathbb P, \sigma)$.
	
	For each $\omega \in \Omega$, let $T_\omega  \colon [0, 1] \to [0, 1]$ be an LSV map with the parameter $\beta (\omega) \in (0, 1)$.  As usual, we set 
	\[
	T_\omega^n:=T_{\sigma^{n-1}\omega}\circ \ldots \circ T_{\sigma \omega}\circ T_\omega, \quad \omega \in \Omega,  \ n\in \mathbb N.
	\]
	In the same manner, denoting by $\mathcal L_\omega \colon L^1(m)\to L^1(m)$ the transfer operator associated to $T_\omega$ and $m$, we set 
	\[
	\mathcal L_\omega^n:=\mathcal L_{\sigma^{n-1}\omega}\circ \ldots \circ \mathcal L_{\sigma \omega}\circ \mathcal L_\omega, \quad \omega \in \Omega,  \ n\in \mathbb N.
	\]
	We also set $\mathcal L_\omega^0 :=\Id$ for $\omega \in \Omega$.
	In the sequel, we will assume that $\omega \mapsto \beta (\omega)$ is measurable and that 
	\[
	\alpha:=\esssup_{\omega \in \Omega}\beta(\omega)< 1.
	\]
	It is proved in~\cite[Proposition 9]{DGTS} that the cocycle of maps $(T_\omega)_{\omega \in \Omega}$ admits a unique random a.c.i.m $\mu$ on $\Omega \times [0, 1]$, which can be identified with a family of probability measures $(\mu_\omega)_{\omega \in \Omega}$ on $[0, 1]$ such that
	\[
	T_\omega^*\mu_\omega=\mu_{\sigma \omega} \quad \text{for $\mathbb P$-a.e. $\omega \in \Omega$.}
	\]
	Moreover, $d\mu_\omega=h_\omega \, dm$ with $h_\omega \in \mathcal C_*\cap \mathcal C_2$ for some $a>1$, where $\mathcal C_*=\mathcal C_*(a)$ and $\mathcal C_2=\mathcal C_2(b_1, b_2)$ are cones as in~\cite[Section 2.2]{DGTS}, that is $\mathcal C_*$ consist of  $\phi \in C^0(0, 1]\cap L^1(m)$ such that $\phi \ge 0$, $\phi$ decreasing, $X^{\alpha+1}\phi$ increasing (where $X$ denotes the identity map), and 
	\[
	\int_0^x \phi (t)\, dt\le ax^{1-\alpha}\int_0^1 \phi (t)\, dt \quad x\in (0, 1].
	\]
	Moreover, $\mathcal C_2$ consists of all $\phi \in C^2(0, 1]$ so that
	\[
	\phi (x)\ge 0, \quad |\phi'(x)| \le \frac{b_1}{x}\phi(x) \quad \text{and} \quad |\phi''(x)| \le \frac{b_2}{x^2}\phi(x), \quad x\in (0, 1].
	\]
	We stress that the parameters $a$, $b_1$ and $b_2$ depend only on $\alpha$. Throughout the paper, we will often use the following simple result.
	
	If $h\in \mathcal C_*$ and $\varphi \in C^1[0, 1]$, then by 
	\cite[Lemma 3.3]{LS} (see also \cite[Remark 2.5]{NTV} and \cite[Lemma 3.4]{NPT}), 
	\begin{align}\label{eq:decomp_c1}
		\varphi h = g_1 - g_2 
	\end{align}
	for $g_1,g_2 \in \cC_*$ with $\Vert g_i \Vert_{L^1(m)} \le C_\alpha \Vert \varphi \Vert_{C^1} \Vert h \Vert_{L^1(m)}$ for $i = 1,2$. Moreover, if $h \in C(0,1]$, then $g_i \in C^1(0,1]$.
	
	\begin{proposition}
		There exists $C_\alpha>0$ such that 
		\begin{equation}\label{dec}
			\|\mathcal L_\omega^n (\varphi h)\|_{L^1(m)} \le C_\alpha n^{-1/\alpha+1}\|\varphi \|_{C^1} \cdot \|h\|_{L^1(m)},
		\end{equation}
		for $\mathbb P$-a.e. $\omega \in \Omega$,  $n\in \mathbb N$, and $\varphi \in C^1[0, 1]$, $h\in \mathcal C_*$ with
		$\int_0^1 \varphi h\, dm=0$.
	\end{proposition}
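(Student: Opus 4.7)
}
The plan is to reduce the bound to a memory-loss estimate between two probability densities lying in the invariant cone $\mathcal C_*$. First, I would apply the decomposition~\eqref{eq:decomp_c1} to write
\[
\varphi h = g_1 - g_2, \qquad g_i \in \mathcal C_*,\qquad \|g_i\|_{L^1(m)} \le C_\alpha \|\varphi\|_{C^1}\|h\|_{L^1(m)}.
\]
The zero-mean hypothesis on $\varphi h$ forces $\int_0^1 g_1\,dm = \int_0^1 g_2\,dm =: c$, with $|c| \le C_\alpha \|\varphi\|_{C^1}\|h\|_{L^1(m)}$. If $c = 0$, then $g_1 \equiv g_2 \equiv 0$, because elements of $\mathcal C_*$ are nonnegative with total integral $c$, and the claim is trivial. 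Otherwise, set $\phi_i := g_i/c$, so that $\phi_1,\phi_2 \in \mathcal C_*$ are probability densities and
\[
\mathcal L_\omega^n(\varphi h) = c \,\big(\mathcal L_\omega^n \phi_1 - \mathcal L_\omega^n\phi_2\big).
\]

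The second step is to invoke a quenched memory-loss estimate of the form
\[
\big\|\mathcal L_\omega^n\phi_1 - \mathcal L_\omega^n\phi_2\big\|_{L^1(m)} \le C_\alpha\, n^{-1/\alpha+1}
\]
valid for $\mathbb P$-a.e.\ $\omega$ and for any pair of probability densities $\phi_1,\phi_2 \in \mathcal C_*$. This is exactly the kind of polynomial loss-of-memory statement supplied by the Hilbert-metric contraction analysis on the cone $\mathcal C_*$ developed in the LSV setting, and in our random framework it is available under the standing assumption $\esssup_\omega \alpha(\omega) < 1$ via~\cite{DGTS} (with predecessors in the sequential setting such as~\cite{AHNTV, BBD}). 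Combining this with the bound $|c| \le C_\alpha \|\varphi\|_{C^1}\|h\|_{L^1(m)}$ from Step~1 gives the desired inequality.

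\paragraph{\textbf{Main obstacle.}}
The only nontrivial point is the uniformity in $\omega$ of the memory-loss rate and of the constant $C_\alpha$. Because the cones $\mathcal C_*=\mathcal C_*(a)$ and $\mathcal C_2=\mathcal C_2(b_1,b_2)$ depend only on $\alpha=\esssup_\omega\alpha(\omega)<1$, the cone-contraction estimates are deterministic, and the rate $n^{-1/\alpha+1}$ with a constant depending only on $\alpha$ carries over to the cocycle without any mixing assumption on $(\Omega,\mathbb P,\sigma)$. Granted this, the proposition follows immediately from the decomposition argument above.
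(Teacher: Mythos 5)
Your proof follows exactly the paper's route: decompose $\varphi h$ via \eqref{eq:decomp_c1} into a difference of cone elements, use the zero-mean hypothesis to normalize them to probability densities in $\mathcal C_*$, and invoke a quenched memory-loss estimate with rate $n^{-1/\alpha+1}$ for such pairs of densities. The paper's own proof is a one-liner citing \cite[Theorem 1.1]{KL} together with \eqref{eq:decomp_c1}; your version is the same argument written out, with the memory-loss input attributed somewhat more loosely to the cone-contraction theory via \cite{DGTS} rather than pinned to \cite[Theorem 1.1]{KL}, but this is the same underlying result.
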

	
	\begin{proof} 
		The result follows by combining 
		\eqref{DECC}
		with \eqref{eq:decomp_c1}.
	\end{proof}
	
	\begin{proposition} Suppose that $\psi \in C^1(0,1]$ satisfies 
		$\int_0^1 \psi \, d m = 0$ and, for $x \in (0,1]$,
		$$
		|\psi(x)| \le C_{0} x^{-\gamma} \quad 
		\text{and} \quad 
		|\psi'(x)| \le C_{1} x^{-\gamma - 1},
		$$
		where $0 < \gamma \le \alpha$ and $C_0,C_1 > 0$.
		Then, there exists $C=C_{a, \alpha, \gamma}>0$
		such that, for $\bP$-a.e. $\omega \in \Omega$, 
		\begin{align}\label{eq:fast_ml_1}
			\Vert \cL_\omega^n (  \psi ) \Vert_{L^1(m)} \le C(C_{0} + C_{ 1})
			n^{-1/\alpha + \gamma/ \alpha}
		\end{align}
		holds for $n \in \mathbb{N}$. In particular, if $\varphi \in C^1[0, 1]$ satisfies
		$\int_0^1 \varphi h_\omega \, dm = 0$ for $\bP$-a.e. $\omega \in \Omega$, 
		and 
		$$
		|\varphi(x)| \le C_\varphi x^{\gamma} \quad \forall x \in [0,1], 
		$$
		%for $\bP$-a.e. $\omega \in \Omega$ 
		where $\gamma \ge 0$, 
		then there exists a constant 
		$C=C_{a, \alpha, \gamma} > 0$ 
		%depending only on $a, \alpha, \beta$ 
		such that, for $\bP$-a.e. $\omega \in \Omega$, 
		\begin{align}\label{eq:fast_ml_2}
			\Vert \cL_\omega^n ( \varphi h_\omega  ) \Vert_{L^1(m)} \le C ( C_\varphi + \Vert \varphi \Vert_{C^1}  ) n^{ 1 - (1 +  \min\{ \gamma, \alpha \} )/\alpha}
		\end{align}
		holds for $n \in \mathbb{N}$.
	\end{proposition}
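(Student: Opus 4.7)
The plan is to prove \eqref{eq:fast_ml_1} via a cone decomposition of $\psi$ combined with a refinement of the memory loss estimate from \cite{KL}, and then to deduce \eqref{eq:fast_ml_2} by recognizing $\varphi h_\omega$ as an observable of the type handled in \eqref{eq:fast_ml_1}.

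For \eqref{eq:fast_ml_1}, I would first observe that $h_\beta(x):=x^{-\beta}$ belongs to $\cC_*$ whenever $\beta\le\alpha$: it is positive and decreasing, $x^{\alpha+1-\beta}$ is increasing since $\alpha+1-\beta>0$, and $\int_0^x t^{-\beta}\,dt=x^{1-\beta}/(1-\beta)\le a x^{1-\alpha}\int_0^1 t^{-\beta}\,dt$ for any $a\ge 1$. Choosing $M=M(\alpha,\beta,a)\cdot(C_0+C_1)$ large enough, the pointwise hypotheses on $\psi$ and $\psi'$ make it a direct verification of the three defining properties of $\cC_*$ that both $g_1:=\psi+Mh_\beta$ and $g_2:=Mh_\beta$ lie in $\cC_*$, with $\|g_i\|_{L^1(m)}\le C(C_0+C_1)$ and $\int g_1\,dm=\int g_2\,dm$ (from $\int\psi\,dm=0$). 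This reduces \eqref{eq:fast_ml_1} to bounding $\|\cL_\omega^n(g_1-g_2)\|_{L^1(m)}$. The rate $n^{1-1/\alpha}$ produced by a direct application of \eqref{dec} is weaker than the desired $n^{(\beta-1)/\alpha}$; the gain must come from the improved integrability $\int_0^x|\psi|\,dt\le C_0 x^{1-\beta}/(1-\beta)$, which is strictly stronger than the generic bound $\int_0^x|g_1-g_2|\,dt\lesssim x^{1-\alpha}$ available for arbitrary cone differences. I would thus revisit the coupling/return-time argument underlying \cite[Theorem 1.1]{KL} and track how this improved integrability propagates through the tail estimate, shifting the relevant exponent from $1/\alpha-1$ to $(1-\beta)/\alpha$. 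This is the main obstacle.

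To derive \eqref{eq:fast_ml_2}, I would apply \eqref{eq:fast_ml_1} to $\psi:=\varphi h_\omega$. The cone properties of $h_\omega\in\cC_*\cap\cC_2$ give $h_\omega(x)\le a x^{-\alpha}$ and $|h_\omega'(x)|\le (b_1/x)h_\omega(x)\le a b_1 x^{-\alpha-1}$; combined with $|\varphi(x)|\le C_\varphi x^\beta$ and $|\varphi'(x)|\le\|\varphi\|_{C^1}$, this yields $|\psi(x)|\le C_\alpha C_\varphi x^{\beta-\alpha}$ and $|\psi'(x)|\le C_\alpha(C_\varphi+\|\varphi\|_{C^1})\,x^{\beta-\alpha-1}$. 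When $\beta<\alpha$, setting $\beta':=\alpha-\beta\in(0,\alpha]$ fits the hypothesis of \eqref{eq:fast_ml_1} and produces the claimed rate $n^{-1/\alpha+\beta'/\alpha}=n^{1-(1+\beta)/\alpha}$. When $\beta\ge\alpha$ the function $\psi$ is uniformly bounded with $|\psi'|\lesssim x^{-1}$, so I would either pass to the boundary case $\beta'\downarrow 0$ in \eqref{eq:fast_ml_1}, or argue directly via \eqref{dec} with $h=h_\omega$, exploiting that $\varphi$ vanishes at $0$ to order at least $\alpha$, in order to obtain the asserted rate $n^{-1/\alpha}=n^{1-(1+\alpha)/\alpha}$.
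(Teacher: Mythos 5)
Your reduction of \eqref{eq:fast_ml_2} to \eqref{eq:fast_ml_1} via $\psi=\varphi h_\omega$, using the inequalities $h_\omega(x)\le ax^{-\alpha}$ and $|h_\omega'(x)|\le ab_1 x^{-\alpha-1}$ coming from $h_\omega\in\cC_*\cap\cC_2$, is essentially the same as the paper's and is correct in the range $\beta<\alpha$. The genuine gap is in your treatment of \eqref{eq:fast_ml_1}, and you acknowledge it yourself: you stop at ``the main obstacle'' without resolving it.

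The missing idea is that you should not try to push $g_1,g_2$ into the cone $\cC_*=\cC_*(a;\alpha)$ and then ``refine'' the coupling argument of \cite{KL}. Instead, decompose $\psi=\psi_1-\psi_2$ in the \emph{smaller} cone $\cC_*(a;\beta)$ whose integrability condition reads $\int_0^x\phi\le a x^{1-\beta}\int_0^1\phi$; the pointwise bounds $|\psi(x)|\le C_0 x^{-\beta}$, $|\psi'(x)|\le C_1 x^{-\beta-1}$ are exactly what makes $\psi_i=\psi+\lambda x^{-\beta}$, $\psi_2=\lambda x^{-\beta}$ (for $\lambda\gtrsim(C_0+C_1)\max\{2,1/\beta,1/(a-1)\}$) land in $\cC_*(a;\beta)$, not merely in $\cC_*(a;\alpha)$. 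Once this is done, no re-examination of the coupling/return-time argument is necessary: normalized densities in $\cC_*(a;\beta)$ are regular in the sense of \cite[Definition~3.5]{KL} (see \cite[Proposition~3.14]{KL}), and the $x^{1-\beta}$-integrability immediately yields the return-time tail bound $\tilde\nu_i(\tau_\omega\ge n)\le C_{a,\alpha,\beta}\,n^{-1/\alpha+\beta/\alpha}$. Then \cite[Theorem~3.8]{KL} --- which is formulated precisely to convert tail bounds of regular initial densities into memory-loss rates --- gives \eqref{eq:fast_ml_1} directly. In short, the machinery you would ``revisit'' already exists in \cite{KL} and is invoked by the paper; your version of the argument leaves exactly this step undone.

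Two smaller issues. First, in \eqref{eq:fast_ml_2} for $\beta\ge\alpha$ you offer two alternatives, and both are problematic as written: arguing ``directly via \eqref{dec}'' produces $n^{1-1/\alpha}$, not the claimed $n^{-1/\alpha}$, and ``passing to the boundary $\beta'\downarrow 0$'' in \eqref{eq:fast_ml_1} is not free, since the constant in \eqref{eq:fast_ml_1} degenerates as $\beta\to 0$ (the lower bound on $\lambda$ involves $1/\beta$). Second, when checking $g_1,g_2\in\cC_*(a;\alpha)$ you verify the weak inequality $\int_0^x g_i\le ax^{1-\alpha}\int_0^1 g_i$, but as noted above this is insufficient to propagate the improved rate; you need the $\beta$-cone membership.
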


	\begin{proof} Denote by $\cC_*(a; \gamma)$ the cone 
		obtained by replacing $\alpha$ with $\gamma$ in the definition of $\mathcal C_*$.
		It follows from \cite[Lemma 3]{DGTS} that\footnote{Note that
			although $F \in C^2(0,1]$ is assumed in \cite[Lemma 3]{DGTS}, 
			the proof only uses $F \in C^1(0,1]$.
		}
		$$
		\psi(x) = \psi(x) + \lambda x^{- \gamma} -  \lambda x^{- \gamma} = \psi_{1} - \psi_{ 2},
		$$
		where $\psi_{i} \in \cC_*(a; \gamma)$, provided that 
		$$
		\lambda \ge ( C_{0} + C_{1}  ) \max \biggl\{ 
		2, \frac{1}{\gamma}, \frac{1}{a - 1}
		\biggr\}.
		$$
		If $\int_0^1 \psi_{ 1} \, dm = 0$, then $\psi_1(x) = 0$ for $m$-a.e. $x \in [0,1]$ and 
		\eqref{eq:fast_ml_1} holds trivially. Otherwise, 
		let $\tilde{\nu}_{i}$ denote the probability measure with density $\tilde{\psi}_{ i} = \psi_{ i} / \int_0^1 \psi_{ i} \, dm \in \cC_*(a; \gamma)$.
		Then, $\tilde{\nu}_{i}$ is regular in the sense 
		of \cite[Definition 3.5]{KL}; see \cite[Proposition 3.14]{KL}. Moreover, the first entry time
		$$
		\tau_\omega(x) = \inf \{  n \ge 1 \: : \:  T_\omega^n(x)  \in [1/2,1] \}
		$$
		satisfies 
		$$
		\tilde{\nu}_{i}( \tau_\omega \ge n ) \le C_{a, \gamma, \alpha} n^{- 1 / \alpha + \gamma / \alpha}.
		$$
		In the terminology of \cite{KL}, this means that $\tilde{\nu}_{i}$ has tail bound 
		$C_{a,\gamma, \alpha} n^{- 1 / \alpha + \gamma / \alpha}$. Note that
		$$
		\Vert \cL_\omega^n (  \psi ) \Vert_{L^1(m)}
		=  \frac{\lambda}{ 1 -\gamma} \Vert \cL_\omega^n (   \tilde{\psi}_{ 1} -    \tilde{\psi}_{ 2}   ) \Vert_{L^1(m)}.
		$$
		Now, \eqref{eq:fast_ml_1} follows  from \cite[Theorem 3.8]{KL}, and 
		\eqref{eq:fast_ml_2} is a straightforward consequence 
		of \eqref{eq:fast_ml_1} using $h_\omega \in \cC_* \cap \cC_2$.
	\end{proof}

	We are now in a position to state and prove our quenched central limit theorem for random compositions of LSV maps.
	\begin{theorem}\label{qclt}
		Let $\Phi \colon \Omega \times [0, 1] \to \mathbb R$ be a measurable map such that $\varphi_\omega:=\Phi (\omega, \cdot)\in C^1[0, 1]$ for $\omega \in \Omega$ and that 
		\begin{equation}\label{integ}
			\omega \mapsto \|\varphi_\omega\|_{C^1} \in L^2(\Omega, \mathcal F, \mathbb P).
		\end{equation}
		Moreover, assume that one of the following two conditions holds:
		\begin{itemize}
			\item[(i)] $\alpha < 1/2$.
			\item[(ii)] $\alpha \ge 1/2$ and,
			for $x \in [0,1]$ and $\bP$-a.e. $\omega \in \Omega$, 
			\begin{equation}\label{special_observable}
				|\varphi_\omega(x)| \le C_\omega x^{\gamma} \quad 
				\text{and} \quad \int_0^1 \varphi_\omega h_\omega \, dm = 0
			\end{equation}
			hold with $\gamma > 2 \alpha - 1$, where $\omega \mapsto C_\omega \in L^2(\Omega, \mathcal{F}, \bP)$.
		\end{itemize}
		Then, for $\mathbb P$-a.e. $\omega \in \Omega$,
		\[
		\frac{\sum_{i=0}^{n-1}(\varphi_{\sigma^i \omega}\circ T_\omega^i-\int_0^1\varphi_{\sigma^i \omega}\, d\mu_{\sigma^i \omega})}{\sqrt n}\to_d \mathcal N(0, \Sigma^2) \quad \text{on $([0, 1], \mu_\omega)$,}
		\]
		where 
		\[
		\Sigma^2:=\int_\Omega \int_0^1 \varphi_{\omega, 0}^2h_\omega \, dm\, d\mathbb P(\omega)+2\sum_{n=1}^\infty\int_\Omega \int_0^1 (\varphi_{\omega, 0}h_\omega) \cdot (\varphi_{\sigma^n \omega, 0}\circ T_\omega^n)\, dm\, d\mathbb P(\omega),
		\]
		and $\varphi_{\omega, 0}:=\varphi_\omega-\int_0^1 \varphi_\omega h_\omega \, dm$. Here $\mathcal N(0, \Sigma^2)$ denotes the normal distribution with mean $0$ and variance $\Sigma^2$ provided that $\Sigma^2>0$, and the unit mass in $0$ in the case $\Sigma^2=0$.
	\end{theorem}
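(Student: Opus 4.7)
The plan is to deduce the theorem from Kifer's abstract quenched CLT~\cite[Theorem 2.3]{K}, applied to the centered cocycle
\[
\Phi_0(\omega,x) := \varphi_{\omega,0}(x) = \varphi_\omega(x) - \int_0^1 \varphi_\omega h_\omega\, dm.
\]
Kifer's framework requires essentially two inputs: (a) a square-integrability condition on a suitable norm of the observable, and (b) summability over $n$ of the annealed correlations
\[
c_n(\omega) := \int_0^1 \varphi_{\sigma^n\omega,0}\cdot \cL_\omega^n(\varphi_{\omega,0}\, h_\omega)\, dm,
\]
in the sense that $\sum_{n\ge 1}\int_\Omega |c_n(\omega)|\, d\bP(\omega) < \infty$, which also makes the Green--Kubo series for $\Sigma^2$ converge absolutely. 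Condition~(a) is immediate from \eqref{integ} since $|\varphi_{\omega,0}| \le 2\|\varphi_\omega\|_{C^1}$; in case (ii) the mean already vanishes by~\eqref{special_observable}, so $\varphi_{\omega,0}=\varphi_\omega$ and the pointwise bound $|\varphi_\omega(x)|\le C_\omega x^\beta$ persists with $C_\omega \in L^2(\bP)$.

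The heart of the proof is verifying the summability in~(b). One estimates
\[
|c_n(\omega)| \le \|\varphi_{\sigma^n\omega,0}\|_\infty \cdot \|\cL_\omega^n(\varphi_{\omega,0}\, h_\omega)\|_{L^1(m)},
\]
noting that $\int_0^1 \varphi_{\omega,0}\, h_\omega\, dm = 0$ and $h_\omega \in \cC_*$, so the decay estimates from the two preceding propositions apply. In case~(i), $\varphi_{\omega,0}\in C^1[0,1]$, so \eqref{dec} yields decay of order $n^{-1/\alpha+1}$; since $\alpha<1/2$, the exponent is strictly less than $-1$. In case~(ii), $\varphi_\omega$ satisfies the pointwise bound $|\varphi_\omega(x)|\le C_\omega x^\beta$, so \eqref{eq:fast_ml_2} gives decay of order $n^{1-(1+\min\{\beta,\alpha\})/\alpha}$; the hypothesis $\beta > 2\alpha-1$ together with $\alpha<1$ makes $\min\{\beta,\alpha\} > 2\alpha-1$, so the exponent is again strictly less than $-1$. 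In both cases Cauchy--Schwarz on $\Omega$, combined with the $\sigma$-invariance of $\bP$ and the $L^2$-integrability from~(a), yields $\sum_n\int_\Omega |c_n|\, d\bP < \infty$.

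With (a) and (b) established, \cite[Theorem 2.3]{K} simultaneously delivers quenched convergence in distribution of the normalized Birkhoff sums on $([0,1],\mu_\omega)$, identifies the limit variance as the Green--Kubo expression displayed in the statement, and covers the degenerate case $\Sigma^2=0$ via convergence to the unit mass at $0$. The main obstacle I anticipate is the bookkeeping needed to turn the quenched decay estimates from the two propositions into summable \emph{annealed} bounds with the correct use of $\sigma$-invariance and the $L^2$-integrability hypothesis; a secondary point is a measurability check for $\omega \mapsto \|\cL_\omega^n(\varphi_{\omega,0}\, h_\omega)\|_{L^1(m)}$ and similar quantities, which follows from the measurable construction of $h_\omega$ provided by~\cite[Proposition 9]{DGTS}.
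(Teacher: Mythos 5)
Your overall strategy is the same as the paper's: center the observable fiberwise, invoke Kifer's abstract quenched CLT \cite[Theorem 2.3]{K} directly on $\Omega$ (without inducing), and obtain the needed decay from \eqref{dec} in case (i) and from \eqref{eq:fast_ml_2} in case (ii). Your handling of the square-integrability input and the annealed correlation summability matches the paper's verification of \cite[(2.7)]{K} and \cite[Theorem 2.3 (i)]{K}, including the use of Cauchy--Schwarz and $\sigma$-invariance.

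However, there is a genuine gap: Kifer's theorem has a \emph{third} hypothesis beyond the two you list, namely condition (ii') of \cite[Theorem 2.3]{K}, and the paper devotes a dedicated computation to it. Concretely, one must show that
\[
\sum_{n=0}^\infty \mathbb E_\omega\,\bigl|L_{\sigma^{-n}\omega}^n \varphi_{\sigma^{-n}\omega,0}\bigr| \;\in\; L^2(\Omega,\mathcal F,\mathbb P),
\]
where $L_\omega(\psi)=\mathcal L_\omega(\psi h_\omega)/h_{\sigma\omega}$ is the normalized (Markov/Koopman-dual) transfer operator and $\mathbb E_\omega$ denotes integration against $\mu_\omega$. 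This is a square-integrability statement about the random series of fiberwise $L^1(\mu_\omega)$-norms of the backward conditional expectations $\mathbb E(\varphi_{\sigma^{-n}\omega,0}\circ T_{\sigma^{-n}\omega}^{-n}\mid \cdot)$; it is not a consequence of the annealed correlation summability you call (b), which only controls expectations over $\Omega$, not $L^2(\Omega)$-norms. The paper handles (ii') by rewriting $\mathbb E_\omega|L_{\sigma^{-n}\omega}^n\varphi_{\sigma^{-n}\omega,0}|=\|\mathcal L_{\sigma^{-n}\omega}^n(\varphi_{\sigma^{-n}\omega,0}h_{\sigma^{-n}\omega})\|_{L^1(m)}$, applying the same decay estimate \eqref{dec} (resp.\ \eqref{eq:fast_ml_2}), and then using $\sigma$-invariance and Minkowski in $L^2(\Omega)$. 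Since you explicitly state that only two inputs are needed, the proposal as written would not justify the invocation of \cite[Theorem 2.3]{K}; you need to add this (ii')-type verification before the conclusion follows.
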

	
	\begin{proof} First, we assume that (i) holds.
		We will verify the assumptions of~\cite[Theorem 2.3.]{K} with $Q=\Omega$ (i.e., when there is no inducing involved) and with the fiberwise centered observable.
		Note that $\varphi_{\omega, 0} \in C^1[0, 1]$ for $\omega \in \Omega$.
		Since 
		\[
		\left |\int_0^1\varphi_\omega  \, d\mu_\omega\right |=\left | \int_0^1\varphi_\omega h_\omega \, dm \right |\le \int_0^1|\varphi_\omega| h_\omega \, dm\le \|\varphi_\omega \|_{C^1}\int_0^1h_\omega \, dm=\|\varphi_\omega\|_{C^1},
		\]
		we have that $\|\varphi_{\omega, 0}\|_{C^1}\le 2\|\varphi_\omega \|_{C^1}$, and thus from~\eqref{integ} we get that 
		\begin{equation}\label{integ2}
			\omega \mapsto \|\varphi_{\omega, 0}\|_{C^1} \in L^2(\Omega, \mathcal F, \mathbb P).
		\end{equation}
		Observing that $(\mathbb E_{\mu_\omega}|\varphi_{\omega, 0}|^2)^{1/2}\le \|\varphi_{\omega, 0}\|_{C^1}$, we see that~\eqref{integ2} implies~\cite[(2.7)]{K}.
		
		Next, we aim to verify that~\cite[Theorem 2.3 (i)]{K} is true. Using~\eqref{dec} we have that  
		\begin{equation}\label{eq:cond_1}
			\begin{split}
				\sum_{n=1}^\infty \left |\mathbb E_{\mu_\omega}(\varphi_{\omega, 0} \cdot (\varphi_{\sigma^n \omega, 0}\circ T_\omega^n))\right | &=\sum_{n=1}^\infty \left |\int_0^1 \varphi_{\omega, 0} h_\omega (\varphi_{\sigma^n \omega, 0}\circ T_\omega^n)\, dm \right | \\
				&=\sum_{n=1}^\infty \left |\int_0^1 \varphi_{\sigma^n \omega, 0}  \mathcal L_\omega^n( \varphi_{\omega, 0} h_\omega) \, dm \right |  \\
				&\le \sum_{n=1}^\infty \|\mathcal L_\omega^n (\varphi_{\omega, 0} h_\omega)\|_{L^1(m)} \cdot \|\varphi_{\sigma^n \omega, 0}\|_{C^1} \\
				&\le C_\alpha\sum_{n=1}^\infty n^{-1/\alpha+1}\|\varphi_{\omega, 0} \|_{C^1} \cdot \|h_\omega\|_{L^1(m)}\cdot \|\varphi_{\sigma^n \omega, 0}\|_{C^1} \\
				&= C_\alpha\sum_{n=1}^\infty n^{-1/\alpha+1}\|\varphi_{\omega, 0}\|_{C^1}\cdot \|\varphi_{\sigma^n \omega, 0}\|_{C^1}.
			\end{split}
		\end{equation}
		Consequently, using H\"{o}lder's inequality and that $\sigma$ preserves $\mathbb P$,  we have that 
		\[
		\begin{split}
			\mathbb E_{\mathbb P}\sum_{n=1}^\infty \left |\mathbb E_{\mu_\omega}(\varphi_{\omega, 0} \cdot (\varphi_{\sigma^n \omega, 0}\circ T_\omega^n))\right | &\le C_\alpha\sum_{n=1}^\infty n^{-1/\alpha+1}\mathbb E_{\mathbb P}(\|\varphi_{\omega, 0}\|_{C^1}\cdot \|\varphi_{\sigma^n \omega, 0}\|_{C^1}) \\
			&\le C_\alpha\sum_{n=1}^\infty n^{-1/\alpha+1} (\mathbb E_{\mathbb P}(\|\varphi_{\omega, 0}\|_{C^1}^2))^{1/2} \cdot (\mathbb E_{\mathbb P}(\|\varphi_{\sigma^n \omega, 0}\|_{C^1}^2))^{1/2} \\
			&\le C_\alpha\mathbb E_{\mathbb P}(\|\varphi_\omega\|_{C^1}^2)\sum_{n=1}^\infty n^{-1/\alpha+1} ,
		\end{split}
		\]
		which is finite due to~\eqref{integ2} and $\alpha<1/2$.
		
		It remains to verify~\cite[Theorem 2.3 (ii')]{K}. More precisely, we need to show that
		\begin{equation}\label{integ3}
			\sum_{n=0}^\infty \mathbb E_{\mu_\omega}  |L_{\sigma^{-n}\omega}^n \varphi_{\sigma^{-n}\omega, 0}| \in L^2(\Omega, \mathcal F, \mathbb P),
		\end{equation}
		where 
		\[
		L_\omega(\psi):=\frac{\mathcal L_\omega (\psi h_\omega)}{h_{\sigma \omega}} \quad \text{and} \quad L_\omega^n:=L_{\sigma^{n-1}\omega} \circ \ldots \circ L_{\sigma \omega}\circ L_\omega,
		\]
		for $\omega \in \Omega$ and $n\in \mathbb N$. In addition, $L_\omega^0:=\Id$.  Note that 
		\[
		L_\omega^n (\psi)=\frac{\mathcal L_\omega^n(\psi h_\omega)}{h_{\sigma^n \omega}},
		\]
		for $\omega \in \Omega$ and $n\in \mathbb N$.
		Then, by relying on~\eqref{dec} again we see that
		\begin{equation}\label{eq:cond_2}
			\begin{split}
				\sum_{n=1}^\infty \mathbb E_{\mu_\omega}  |L_{\sigma^{-n}\omega}^n \varphi_{\sigma^{-n}\omega, 0}| &=\sum_{n=1}^\infty \|L_{\sigma^{-n}\omega}^n \varphi_{\sigma^{-n}\omega}\|_{L^1(\mu_\omega)} \\
				&=\sum_{n=1}^\infty \|\mathcal L_{\sigma^{-n}\omega}^n(\varphi_{\sigma^{-n}\omega, 0}h_{\sigma^{-n}\omega})\|_{L^1(m)} \\
				&\le C_\alpha\sum_{n=1}^\infty n^{-1/\alpha+1}\|\varphi_{\sigma^{-n}\omega, 0}h_{\sigma^{-n}\omega}\|_{L^1(m)} \\
				&\le C_\alpha\sum_{n=1}^\infty n^{-1/\alpha+1}\|\varphi_{\sigma^{-n}\omega, 0}\|_{C^1}.
			\end{split}
		\end{equation}
		Therefore, using the fact that $\sigma$ preserves $\mathbb P$ we have
		\[
		\left \|\sum_{n=1}^\infty \mathbb E_{\mu_\omega } |L_{\sigma^{-n}\omega}^n \varphi_{\sigma^{-n}\omega, 0}| \right \|_{L^2(\Omega, \mathcal F, \mathbb P)}\le C_\alpha \| \|\varphi_{\omega}\|_{C^1}\|_{L^2(\Omega, \mathcal F, \mathbb P)}\sum_{n=1}^\infty n^{-1/\alpha+1},
		\]
		which is finite due to~\eqref{integ2} and $\alpha <\frac 1 2$.  The proof of (i) is complete. \smallskip 
		
		The proof in the case of (ii) is similar to that of (i) and proceeds via an application of \cite[Theorem 2.3]{K} relying on \eqref{eq:fast_ml_2}. 
		To avoid repetition, we only give an outline of the proof. Note that $\varphi_{\omega, 0}=\varphi_\omega$.
		Assuming (ii), we obtain \cite[(2.7)]{K} using~\eqref{integ} since $(\mathbb E_{\mu_\omega}|\varphi_{\omega}|^2)^{1/2}\le \|\varphi_\omega\|_{C^1}$. Arguing as in \eqref{eq:cond_1}, using~\eqref{eq:fast_ml_2} and~\eqref{special_observable}, we obtain
		\[
		\begin{split}
			\sum_{n=1}^\infty \left |\mathbb E_{\mu_\omega}(\varphi_{\omega} \cdot (\varphi_{\sigma^n \omega}\circ T_\omega^n))\right | 
			&\le \sum_{n=1}^\infty \|\mathcal L_\omega^n (\varphi_{\omega} h_\omega)\|_{L^1(m)} \cdot \|\varphi_{\sigma^n \omega}\|_{C^1} \\
			&\le C \sum_{n=1}^\infty ( C_\omega + \Vert \varphi_\omega \Vert_{C^1}  ) n^{ 1 - (1 +  \min\{\alpha, \gamma \} )/\alpha}
			\cdot C_{\sigma^n \omega},
		\end{split}
		\]
		for $\bP$-a.e. $\omega \in \Omega$, where $C > 0$ 
		depends only on $a, \alpha, \gamma$. Consequently,
		\[
		\begin{split}
			&\mathbb E_{\mathbb P}\sum_{n=1}^\infty \left |\mathbb E_{\mu_\omega}(\varphi_{\omega} \cdot (\varphi_{\sigma^n \omega}\circ T_\omega^n))\right | \\
			&\le 
			C
			\sum_{n=1}^\infty 
			\mathbb E_{\mathbb P} [ ( C_\omega + \Vert \varphi_\omega \Vert_{C^1}  )   C_{\sigma^n \omega} ]
			 n^{ 1 - (1 +  \min\{\alpha, \gamma \} )/\alpha}  \\
			&\le 
			C\biggl\{  (  ( \mathbb E_{\mathbb P} ( C_{ \omega}^2 ))^{1/2}  
			+  ( \mathbb E_{\mathbb P} (  \Vert \varphi_\omega \Vert_{C^1}^2 ) )^{1/2}  ) ( \mathbb E_{\mathbb P} ( C_{ \omega}^2 ))^{1/2}  \biggr\} 
			\sum_{n=1}^\infty  n^{ 1 - (1 +  \min\{\alpha, \gamma \} )/\alpha} < \infty,
		\end{split}
		\]
		since $\alpha < (1 + \gamma) / 2$. Hence, \cite[Theorem 2.3 (i)]{K} holds.
		Finally, \cite[Theorem 2.3 (ii')]{K} follows by a computation similar to \eqref{eq:cond_2}, but using~\eqref{eq:fast_ml_2} instead of \eqref{dec} to obtain
		\[
		\begin{split}
			\sum_{n=1}^\infty \mathbb E_{\mu_\omega } |L_{\sigma^{-n}\omega}^n \varphi_{\sigma^{-n}\omega}| 
			&\le C  \sum_{n=1}^\infty
			( C_{ \sigma^{-n}  \omega }+ \|\varphi_{\sigma^{-n}\omega}\|_{C^1} ) 
			 n^{ 1 - (1 +  \min\{\alpha,\gamma\} )/\alpha}.
		\end{split}
		\]
	\end{proof}
	
	\begin{remark}\phantom{-}
		\begin{enumerate}
			\item 
			In the case where $(\Omega, \sigma)$ is a full shift, $\mathbb P$ is an ergodic invariant measure for $\sigma$, and $\varphi_\omega$ 
			are Lipschitz continuous with Lipschitz constants uniformly bounded in $\omega$,
			Theorem~\ref{qclt} (i)  was established in~\cite[Corollary 3.8]{Su2} (extending the previous results~\cite{NPT, NTV}).
			\item The proof of Theorem~\ref{qclt} yields a stronger conclusion, namely that the quenched law of iterated logarithm holds, provided that $\Sigma^2>0$ (see the statement of~\cite[Theorem 2.3.]{K}).
			\item To the best of our knowledge, Theorem~\ref{qclt} (ii) is new in the setting of random dynamics.
			For a single LSV map $T_\alpha$ with parameter 
			$1/2 < \alpha < 1$ and absolutely continuous invariant probability measure $\mu$, Gou\"{e}zel \cite{G}
			proved that $n^{-1/2} \sum_{j=0}^{n-1} \varphi \circ T_\alpha^n$ 
			converges weakly to a normal distribution on $([0,1], \mu)$, provided that $\varphi$ is H\"older continuous 
			with $\int \varphi \, d \mu = 0$ and 
			$|\varphi(x)| \le C x^\gamma$ holds for $\gamma > \alpha - 1/2$. A corresponding weak invariance principle was proved by Dedecker 
			and Merlev\`{e}de \cite{DM}.
			\item We present a simple class of examples of observables $\Phi$ satisfying~\eqref{special_observable}. To this end, take $C^1$ functions $u, g\colon [0, 1]\to \mathbb R$ such that 
			\[
			0\le u(x)\le Kx^\gamma \quad x\in [0, 1],
			\]
			where $\gamma>2\alpha-1$, and $u$ is not identically zero.
			Define now $\Phi \colon \Omega \times [0, 1]\to \mathbb R$ by
			\[
			\varphi_\omega=\Phi(\omega, \cdot):=u (g-c_\omega) \quad \omega \in \Omega,
			\]
			where 
			\[
			c_\omega:=\frac{\int_0^1 gu\, d\mu_\omega}{\int_0^1u\, d\mu_\omega}, \quad \omega \in \Omega.
			\]
			We note that $c_\omega$ is well-defined for $\mathbb P$-a.e. $\omega \in \Omega$. Indeed, we recall that it was established in the proof of~\cite[Proposition 9]{DGTS} that there is $\rho>0$ such that for $\mathbb P$-a.e. $\omega \in \Omega$, $h_\omega\ge \rho$ $m$-a.e. Hence, 
			\[
			\int_0^1 u\, d\mu_\omega \ge \rho \int_0^1u\, dm>0 \quad \text{for $\mathbb P$-a.e. $\omega \in \Omega$.}
			\]
			In addition, 
			\[
			|c_\omega|\le \frac{\left |\int_0^1 gu\, d\mu_\omega\right |}{\int_0^1 u\, d\mu_\omega}\le \frac{\|g\|_\infty \int_0^1 u\, d\mu_\omega}{\int_0^1 u\, d\mu_\omega}=\|g\|_\infty, \quad \text{for $\mathbb P$-a.e. $\omega \in \Omega$.}
			\]
			Therefore, 
			\[
			|\varphi_\omega(x)|=|u(x)| \cdot |g(x)-c_\omega|\le 2 Kx^\gamma \|g\|_\infty \quad \text{for $x\in [0, 1]$ and $\mathbb P$-a.e. $\omega \in \Omega$.}
			\]
			Thus, the first requirement in~\eqref{special_observable} holds with $C_\omega:=2K\|g\|_\infty$ for $\mathbb P$-a.e. $\omega \in \Omega$. Clearly, the second requirement in~\eqref{special_observable} is also fulfilled.
		\end{enumerate}
	\end{remark}

	\section{Continuity of the variance}\label{sec:varcont}
	We are interested in studying the regularity of the variance $\Sigma^2$ in Theorem~\ref{qclt} under suitable perturbations of random dynamics.
	
	Let us start by introducing the appropriate setup. We take $\varepsilon_0>0$ and measurable maps $\beta \colon \Omega \to (0, 1)$ and $\delta \colon \Omega \to (0, 1)$ such that 
	\[
	\alpha:=\esssup_{\omega}\beta (\omega)+\varepsilon_0 <\frac 1 2 \quad \text{and} \quad \essinf_{\omega}\beta(\omega)-\varepsilon_0>0.
	\]
	For $\omega \in \Omega$ and $\varepsilon \in (-\varepsilon_0, \varepsilon_0)$ let $T_{\omega, \varepsilon}$ be an LSV map with parameter $\beta (\omega)+\varepsilon \delta (\omega)$. We will write $T_{\omega}$ instead of $T_{\omega, 0}$. By $\mathcal L_{\omega, \varepsilon}$ we will denote the transfer operator associated with $T_{\omega, \varepsilon}$, again writing $\mathcal L_\omega$ instead of $\mathcal L_{\omega, 0}$.
	By $(\mu_{\omega, \varepsilon})_{\omega \in \Omega}$ we will denote the random a.c.i.m associated with the cocycle $(T_{\omega, \varepsilon})_{\omega \in \Omega}$. We will write $\mu_\omega$ instead of $\mu_{\omega, 0}$. By $h_{\omega, \varepsilon}\in \mathcal C_*\cap \mathcal C_2$, we denote the density of $\mu_{\omega, \varepsilon}$ (again writing $h_\omega$ instead of $h_{\omega, 0}$). 
	
	Let $F\colon \Omega \times [0, 1] \to \mathbb R$ be a measurable map such that $F(\omega, \cdot)\in C^1[0, 1]$ for $\omega \in \Omega$ and 
	\begin{equation}\label{observ}
		\esssup_{\omega \in \Omega}\|F(\omega, \cdot)\|_{C^1}<+\infty.
	\end{equation}
	In the sequel, we will denote $F_\omega:=F(\omega, \cdot)$. Set
	\[
	f_{\omega, \varepsilon}:=F_\omega-\int_0^1F_\omega \, d_{\mu_{\omega, \varepsilon}}, \quad \omega \in \Omega, \ \varepsilon \in (-\varepsilon_0, \varepsilon_0).
	\]
	Moreover, let 
	\begin{equation}\label{Sigmavar}
		\Sigma_\varepsilon^2:=\int_\Omega \int_0^1f_{\omega, \varepsilon}^2 \, d_{\mu_{\omega, \varepsilon}} \, d\mathbb P(\omega)+2\sum_{n=1}^\infty \int_\Omega \int_0^1 f_{\omega, \varepsilon}\cdot (f_{\sigma^n \omega, \varepsilon}\circ T_{\omega, \varepsilon}^n)\, d\mu_{\omega, \varepsilon}\, d\mathbb P(\omega).
	\end{equation}
	By Theorem~\ref{qclt}, $\Sigma_\varepsilon^2$ is finite for $\varepsilon \in (-\varepsilon_0, \varepsilon_0)$.
	Then, we have the following theorem.
	\begin{theorem}\label{contvariance}
		The map $\varepsilon \mapsto \Sigma_\varepsilon^2$ is continuous in $0$. 
	\end{theorem}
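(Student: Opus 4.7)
The plan is to write $\Sigma_\varepsilon^2$ as the sum of the variance term and the series of covariance terms, establish termwise convergence as $\varepsilon \to 0$, and use a uniform-in-$\varepsilon$ summable dominant to interchange the limit with the sum over $n$. The two key inputs are Proposition~\ref{statstab} (statistical stability of $h_{\omega,\varepsilon}$) and the fact that the decay estimate \eqref{dec} applies uniformly to the perturbed cocycle $(T_{\omega,\varepsilon})$: since $\beta(\omega)+\varepsilon\delta(\omega) \le \alpha$ for every $|\varepsilon|<\varepsilon_0$ and $\mathbb P$-a.e.\ $\omega$, the cones $\cC_*, \cC_2$ and the constant $C_\alpha$ in \eqref{dec} can be chosen independently of $\varepsilon$.

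First I would record the uniform dominant. Setting $M:=\esssup_{\omega}\|F_\omega\|_{C^1}<\infty$, one has $\|f_{\omega,\varepsilon}\|_{C^1}\le 2M$ and $\|f_{\omega,\varepsilon}\|_{\infty}\le 2M$ uniformly in $(\omega,\varepsilon)$, and by construction $\int_0^1 f_{\omega,\varepsilon} h_{\omega,\varepsilon}\, dm=0$. Repeating the computation in~\eqref{eq:cond_1} with $T_\omega$ replaced by $T_{\omega,\varepsilon}$ then gives
\[
\biggl|\int_0^1 f_{\omega,\varepsilon}\,(f_{\sigma^n\omega,\varepsilon}\circ T_{\omega,\varepsilon}^n)\,d\mu_{\omega,\varepsilon}\biggr| \le C_\alpha M^2\, n^{-1/\alpha+1},
\]
uniformly in $\omega$ and $\varepsilon$, which is $\mathbb P$-integrable and summable in $n$ since $\alpha<1/2$.

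Next I would prove termwise convergence. Since $T_\gamma$ depends continuously on the parameter $\gamma$ in $C^0([0,1])$, and composition preserves uniform convergence, for each fixed $n\ge 1$ and $\omega$ we have $T_{\omega,\varepsilon}^n\to T_{\omega,0}^n$ uniformly on $[0,1]$; combined with $\|F_{\sigma^n\omega}\|_{C^1}\le M$ this yields $F_{\sigma^n\omega}\circ T_{\omega,\varepsilon}^n\to F_{\sigma^n\omega}\circ T_{\omega,0}^n$ in $\|\cdot\|_\infty$. Proposition~\ref{statstab} gives $\|h_{\omega,\varepsilon}-h_\omega\|_{L^1(m)}\to 0$, hence also $\int_0^1 F_\omega\,d\mu_{\omega,\varepsilon}\to \int_0^1 F_\omega\,d\mu_\omega$, so $f_{\omega,\varepsilon}\to f_{\omega,0}$ uniformly. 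Splitting
\[
\int_0^1 f_{\omega,\varepsilon}(f_{\sigma^n\omega,\varepsilon}\circ T_{\omega,\varepsilon}^n)h_{\omega,\varepsilon}\,dm - \int_0^1 f_{\omega,0}(f_{\sigma^n\omega,0}\circ T_{\omega,0}^n)h_{\omega}\,dm
\]
into a piece where the difference of the $f$-factors is controlled in $\|\cdot\|_\infty$ (integrated against $h_{\omega,\varepsilon}$, whose total mass is $1$) and a piece where the $f$-factors are frozen at $\varepsilon=0$ and the integral is bounded by $4M^2\|h_{\omega,\varepsilon}-h_\omega\|_{L^1(m)}$ shows fiberwise convergence. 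Since the integrand is bounded by $4M^2$ uniformly in $\omega$, dominated convergence over $(\Omega,\mathbb P)$ gives convergence of the full $n$-th covariance. The variance term $\int_\Omega\int_0^1 f_{\omega,\varepsilon}^2\,d\mu_{\omega,\varepsilon}\,d\mathbb P$ is handled by the same (simpler) argument.

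Finally, the uniform bound from the first step allows a dominated-convergence argument in $n$ to conclude $\Sigma_\varepsilon^2\to\Sigma_0^2$. The only technical care needed is verifying that the cone framework of \cite{DGTS, KL} applies to the perturbed family with constants independent of $\varepsilon$; I expect this to be the main (though mild) obstacle, and it is handled by the uniform parameter bound $\alpha$ that was built into the setup. No refined rate on $\|h_{\omega,\varepsilon}-h_\omega\|_{L^1(m)}$ is required here, in contrast to the differentiability statement in Theorem~\ref{diffvar}.
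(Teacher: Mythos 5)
Your overall structure (summable uniform dominant via \eqref{dec} for the perturbed cocycle, termwise convergence in $\varepsilon$, interchange of sum and limit) is the same as the paper's, and the uniform dominant step is correct. But the termwise-convergence argument as you wrote it contains a genuine error: LSV maps have a jump discontinuity at $x=\tfrac12$, since $T_\gamma(\tfrac12^-)=1$ while $T_\gamma(\tfrac12)=0$. Composition with a discontinuous outer map does \emph{not} preserve uniform convergence, and indeed $T_{\omega,\varepsilon}^n\not\to T_{\omega,0}^n$ uniformly on $[0,1]$: as $\varepsilon$ varies, points near a preimage of $\tfrac12$ can slide across the discontinuity, so the composites disagree by $O(1)$ on a small but nonempty set, however small $|\varepsilon|$ is. The inference ``$F_{\sigma^n\omega}\circ T_{\omega,\varepsilon}^n\to F_{\sigma^n\omega}\circ T_{\omega,0}^n$ in $\|\cdot\|_\infty$'' therefore fails.

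The argument can be salvaged without $\|\cdot\|_\infty$ convergence: $T_{\omega,\varepsilon}^n(x)\to T_{\omega,0}^n(x)$ pointwise for every $x\notin\bigcup_{0\le j<n}(T_\omega^j)^{-1}(\{\tfrac12\})$ (a finite, hence $m$-null set), so the bracketed integrand converges to $0$ $m$-a.e.\ and is uniformly bounded by $4M^2$; then splitting $\int_0^1 g_\varepsilon h_{\omega,\varepsilon}\,dm = \int_0^1 g_\varepsilon h_\omega\,dm + \int_0^1 g_\varepsilon(h_{\omega,\varepsilon}-h_\omega)\,dm$ and applying dominated convergence to the first term (dominant $4M^2 h_\omega\in L^1(m)$) and Proposition~\ref{statstab} to the second gives fiberwise convergence, after which dominated convergence over $(\Omega,\mathbb P)$ finishes. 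This repaired route is genuinely different from, and more elementary than, the paper's proof: the paper never leaves the transfer-operator side, telescopes $\mathcal L_{\omega,\varepsilon}^n-\mathcal L_\omega^n$, invokes the explicit derivative $\partial_\gamma\mathcal L_\gamma(\phi)=-(X_\gamma N_\gamma\phi)'$, decomposes $f_{\omega,0}h_\omega$ into cone elements via \eqref{eq:decomp_c1}, and uses the $N_\gamma$-, $\mathcal L_\gamma$-invariance of $\cC_*\cap C^1(0,1]$ to obtain an explicit $O(|\varepsilon|)$ bound. What the paper's heavier machinery buys is a quantitative rate and uniformity in $\omega$, and it sets up exactly the operator-level estimates that are reused (and refined) in the proof of Theorem~\ref{diffvar}, where mere qualitative convergence would not suffice.
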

	\begin{proof}
		We will first show that
		\begin{equation}\label{claim1}
			\lim_{\varepsilon \to 0}\int_\Omega\int_0^1 f_{\omega, \varepsilon}^2\, d \mu_{\omega,  \varepsilon}\, d\mathbb P(\omega)=\int_\Omega\int_0^1 f_{\omega, 0}^2\, d\mu_\omega\, d\mathbb P(\omega).
		\end{equation}
		To this end, observe that 
		\[
		\begin{split}
			\int_\Omega\int_0^1 f_{\omega, \varepsilon}^2\, d\mu_{\omega,  \varepsilon}\, d\mathbb P(\omega)-\int_\Omega\int_0^1 f_{\omega, 0}^2\, d\mu_\omega\, d\mathbb P(\omega) &=\int_\Omega \int_0^1 (f_{\omega, \varepsilon}^2-f_{\omega, 0}^2)\, d\mu_{\omega, \varepsilon}\, d\mathbb P(\omega)\\
			&\phantom{=}+\int_\Omega \int_0^1 f_{\omega, 0}^2(h_{\omega, \varepsilon}-h_{\omega})\,dm\, d\mathbb P(\omega).
		\end{split}
		\]
		Observe that for $\mathbb P$-a.e. $\omega \in \Omega$,
		\begin{equation}\label{difference1}
			f_{\omega, \varepsilon}-f_{\omega, 0}=\int_0^1F_\omega (h_\omega-h_{\omega, \varepsilon})\, dm,
		\end{equation}
		and consequently
		\[
		|f_{\omega, \varepsilon}-f_{\omega, 0}|\le \|F_\omega\|_{C^0}\|h_{\omega, \varepsilon}-h_{\omega, 0}\|_{L^1(m)} \le C_{\alpha, F} |\varepsilon|,
		\]
		%for some $C_\alpha>0$,
		where in the second inequality we used Proposition~\ref{statstab} and~\eqref{observ}. On the other hand,
		\[
		\|f_{\omega, \varepsilon}+f_{\omega, 0}\|_{C^0}\le 4\|F_\omega \|_{C^0}.
		\]
		Consequently,
		\[
		\|f_{\omega, \varepsilon}^2-f_{\omega, 0}^2\|_{C^0}\le C_{\alpha, F}|\varepsilon|,
		\]
		for $\mathbb P$-a.e. $\omega \in \Omega$. This implies that 
		\[
		\begin{split}
			\left |\int_\Omega \int_0^1 (f_{\omega, \varepsilon}^2-f_{\omega, 0}^2)\, d\mu_{\omega, \varepsilon}\, d\mathbb P(\omega)\right | &\le \int_\Omega \|f_{\omega, \varepsilon}^2-f_{\omega, 0}^2\|_{C^0}\, d\mathbb P(\omega) \le C_{\alpha, F}|\varepsilon|, 
		\end{split}
		\]
		yielding 
		\[
		\lim_{\varepsilon \to 0}\int_\Omega \int_0^1 (f_{\omega, \varepsilon}^2-f_{\omega, 0}^2)\, d\mu_{\omega, \varepsilon}\, d\mathbb P(\omega)=0.
		\]
		Similarly, $|f_{\omega, 0}^2|\le 4\|F_\omega \|_{C^0}^2$ and thus using Proposition~\ref{statstab} and~\eqref{observ} we have
		\[
		\lim_{\varepsilon \to 0}\int_\Omega \int_0^1 f_{\omega, 0}^2(h_{\omega, \varepsilon}-h_{\omega})\,dm\, d\mathbb P(\omega)=0,
		\]
		since 
		\[
		\begin{split}
			\left |\int_\Omega \int_0^1 f_{\omega, 0}^2(h_{\omega, \varepsilon}-h_{\omega})\,dm\, d\mathbb P(\omega)\right | \le \int_\Omega \|f_{\omega, 0}^2\|_{C^0}\cdot \|h_{\omega, \varepsilon}-h_\omega \|_{L^1(m)}\, d\mathbb P(\omega)\le C_{\alpha, F} |\varepsilon|.
		\end{split}
		\]
		Therefore, we conclude that~\eqref{claim1} holds.

		Next, we observe  that 
		\[
		\begin{split}
			\left |\int_\Omega \int_0^1 f_{\omega, \varepsilon}\cdot (f_{\sigma^n \omega, \varepsilon}\circ T_{\omega, \varepsilon}^n)\, d\mu_{\omega, \varepsilon}\, d\mathbb P(\omega) \right |  &=\left | \int_\Omega\int_0^1 \mathcal L_{\omega, \varepsilon}^n(f_{\omega, \varepsilon}h_{\omega, \varepsilon})f_{\sigma^n \omega, \varepsilon}\, dm\, d\mathbb P(\omega) \right | \\
			&\le \int_\Omega \|\mathcal L_{\omega, \varepsilon}^n(f_{\omega, \varepsilon}h_{\omega, \varepsilon})\|_{L^1(m)}\cdot \|f_{\sigma^n \omega, \varepsilon}\|_{C^0}\, d\mathbb P(\omega) \\
			&\le C_\alpha n^{-1/\alpha+1}\left (\esssup\|F_\omega \|_{C^1} \right )^2,
		\end{split}
		\]
		%for $C_\alpha>0$
		where we used~\eqref{dec} and~\eqref{observ}. Since the series $\sum_{n=1}^\infty n^{-1/\alpha+1}$ is summable, in order to prove our theorem, it remains to show that for each $n\in \mathbb N$, the map 
		\[
		\varepsilon \mapsto  \int_\Omega \int_0^1 f_{\omega, \varepsilon}\cdot (f_{\sigma^n \omega, \varepsilon}\circ T_{\omega, \varepsilon}^n)\, d\mu_{\omega, \varepsilon}\, d\mathbb P(\omega)=\int_\Omega\int_0^1 \mathcal L_{\omega, \varepsilon}^n(f_{\omega, \varepsilon}h_{\omega, \varepsilon})f_{\sigma^n \omega, \varepsilon}\, dm\, d\mathbb P(\omega)
		\]
		is continuous in $0$. To this end, we first observe that 
		\[
		\begin{split}
			&\int_\Omega\int_0^1 \mathcal L_{\omega, \varepsilon}^n(f_{\omega, \varepsilon}h_{\omega, \varepsilon})f_{\sigma^n \omega, \varepsilon}\, dm\, d\mathbb P(\omega)-\int_\Omega\int_0^1 \mathcal L_{\omega}^n(f_{\omega, 0}h_{\omega})f_{\sigma^n \omega, 0}\, dm\, d\mathbb P(\omega)\\
			&=\int_\Omega \int_0^1 \mathcal L_\omega^n(f_{\omega, 0}h_\omega)(f_{\sigma^n \omega, \varepsilon}-f_{\sigma^n \omega, 0})\, dm\, d\mathbb P(\omega)+\int_\Omega \int_0^1 (\mathcal L_{\omega, \varepsilon}^n-\mathcal L_\omega^n)(f_{\omega, 0}h_\omega) f_{\sigma^n \omega, \varepsilon}\, dm\, d\mathbb P(\omega)\\
			&\phantom{=}+\int_\Omega \int_0^1 \mathcal L_{\omega, \varepsilon}^n(f_{\omega, \varepsilon}h_{\omega, \varepsilon}-f_{\omega, 0}h_\omega) f_{\sigma^n \omega, \varepsilon}\, dm\, d\mathbb P(\omega) \\
			&=\int_\Omega \int_0^1 (\mathcal L_{\omega, \varepsilon}^n-\mathcal L_\omega^n)(f_{\omega, 0}h_\omega) f_{\sigma^n \omega, \varepsilon}\, dm\, d\mathbb P(\omega)+\int_\Omega \int_0^1 \mathcal L_{\omega, \varepsilon}^n(f_{\omega, \varepsilon}h_{\omega, \varepsilon}-f_{\omega, 0}h_\omega) f_{\sigma^n \omega, \varepsilon}\, dm\, d\mathbb P(\omega),
		\end{split}
		\]
		since $f_{\sigma^n \omega, \varepsilon}-f_{\sigma^n \omega, 0}$ depends only on $\varepsilon$ and $\omega $ (see~\eqref{difference1}) and 
		\[
		\int_0^1 \mathcal L_{\omega}^n (f_{\omega, 0}h_\omega)\, dm=\int_0^1 f_{\omega, 0}h_\omega \, dm=0.
		\]
		%Now, note that 
		%\[
		%\begin{split}
		%&\left |\int_\Omega \int_0^1 \mathcal L_\omega^n(f_{\omega, 0}h_\omega)(f_{\sigma^n \omega, \varepsilon}-f_{\sigma^n \omega, 0})\, dm\, d\mathbb P(\omega) \right |\\
		%&\le \int_\Omega \|\mathcal L_\omega^n (f_{\omega, 0}h_\omega)\|_{L^1(m)}\cdot \|f_{\omega, \varepsilon}-f_{\omega, 0}\|_{L^\infty (m)}\, d\mathbb P(\omega) \\
		%&\le Cn^{-1/\alpha+1}\int_\Omega \|f_{\omega, 0}\|_{C^2}\cdot \|f_{\omega, \varepsilon}-f_{\omega, 0}\|_{C^2}\, d\mathbb P(\omega) \\
		%&\le Cn^{-1/\alpha+1} |\varepsilon| \left (\text{esssup}_{\omega \in \Omega}\|F_\omega \|_{C^2}\right )^2,
		%\end{split}
		%\]
		%for some $C>0$ independent on $\omega$, $\varepsilon$ and $n$. Hence, 
		%\[
		%\lim_{\varepsilon \to 0}\int_\Omega \int_0^1 \mathcal L_\omega^n(f_{\omega, 0}h_\omega)(f_{\sigma^n \omega, \varepsilon}-f_{\sigma^n \omega, 0})\, dm\, d\mathbb P(\omega)=0.
		%\]
		Next, 
		\[
		\begin{split}
			& \left |\int_\Omega \int_0^1 \mathcal L_{\omega, \varepsilon}^n(f_{\omega, \varepsilon}h_{\omega, \varepsilon}-f_{\omega, 0}h_\omega) f_{\sigma^n \omega, \varepsilon}\, dm\, d\mathbb P(\omega) \right | \\
			&\le \int_\Omega \|\mathcal L_{\omega, \varepsilon}^n(f_{\omega, \varepsilon}h_{\omega, \varepsilon}-f_{\omega, 0}h_\omega)\|_{L^1(m)}\cdot \|f_{\sigma^n \omega, \varepsilon}\|_{C^0}\, d\mathbb P(\omega) \\
			&\le \int_\Omega \|f_{\omega, \varepsilon}h_{\omega, \varepsilon}-f_{\omega, 0}h_\omega\|_{L^1(m)}\cdot \|f_{\sigma^n \omega, \varepsilon}\|_{C^0}\, d\mathbb P(\omega) \\
			&\le 2\esssup_{\omega \in \Omega}\|F_\omega\|_{C^0}\int_\Omega \|f_{\omega, \varepsilon}h_{\omega, \varepsilon}-f_{\omega, 0}h_\omega\|_{L^1(m)}\, d\mathbb P(\omega) .
		\end{split}
		\]
		Moreover, 
		\[
		\begin{split}
			\|f_{\omega, \varepsilon}h_{\omega, \varepsilon}-f_{\omega, 0}h_\omega\|_{L^1(m)} &\le \|f_{\omega, \varepsilon}\|_{C^0}\|h_{\omega, \varepsilon}-h_\omega\|_{L^1(m)}+|f_{\omega, \varepsilon}-f_{\omega, 0}| \\
			&\le C_{\alpha, F}|\varepsilon| ,
		\end{split}
		\]
		%for some $C_{\alpha, F}>0$ 
		where we used Proposition~\ref{statstab}. This together with~\eqref{observ} implies that 
		\[
		\lim_{\varepsilon \to 0}\int_\Omega \int_0^1 \mathcal L_{\omega, \varepsilon}^n(f_{\omega, \varepsilon}h_{\omega, \varepsilon}-f_{\omega, 0}h_\omega) f_{\sigma^n \omega, \varepsilon}\, dm\, d\mathbb P(\omega)=0.
		\]
		Finally, we have that 
		\[
		\begin{split}
			&\int_\Omega \int_0^1 (\mathcal L_{\omega, \varepsilon}^n-\mathcal L_\omega^n)(f_{\omega, 0}h_\omega) f_{\sigma^n \omega, \varepsilon}\, dm\, d\mathbb P(\omega) \\
			&=\sum_{j=1}^n \int_\Omega \int_0^1 \mathcal L_{\sigma^j \omega, \varepsilon}^{n-j}(\mathcal L_{\sigma^{j-1}\omega, \varepsilon}-\mathcal L_{\sigma^{j-1}\omega}) \mathcal L_\omega^{j-1}(f_{\omega, 0}h_\omega)f_{\sigma^n \omega, \varepsilon}\, dm\, d\mathbb P(\omega).
		\end{split}
		\]
		Therefore, 
		\[
		\begin{split}
			&\left |\int_\Omega \int_0^1 (\mathcal L_{\omega, \varepsilon}^n-\mathcal L_\omega^n)(f_{\omega, 0}h_\omega) f_{\sigma^n \omega, \varepsilon}\, dm\, d\mathbb P(\omega) \right | \\
			&\le \sum_{j=1}^n \int_\Omega \|\mathcal L_{\sigma^j \omega, \varepsilon}^{n-j}(\mathcal L_{\sigma^{j-1}\omega, \varepsilon}-\mathcal L_{\sigma^{j-1}\omega}) \mathcal L_\omega^{j-1}(f_{\omega, 0}h_\omega)\|_{L^1(m)}\cdot \|f_{\sigma^n \omega, \varepsilon}\|_{C^0}\, d\mathbb P(\omega) \\
			&\le \sum_{j=1}^n \int_\Omega \|(\mathcal L_{\sigma^{j-1}\omega, \varepsilon}-\mathcal L_{\sigma^{j-1}\omega}) \mathcal L_\omega^{j-1}(f_{\omega, 0}h_\omega)\|_{L^1(m)}\cdot \|f_{\sigma^n \omega, \varepsilon}\|_{C^0}\, d\mathbb P(\omega) \\
			&\le 2 \left (\esssup_{\omega \in \Omega}\|F_\omega\|_{C^0}\right )\sum_{j=1}^n \int_\Omega \|(\mathcal L_{\sigma^{j-1}\omega, \varepsilon}-\mathcal L_{\sigma^{j-1}\omega}) \mathcal L_\omega^{j-1}(f_{\omega, 0}h_\omega)\|_{L^1(m)}\, d\mathbb P(\omega).
		\end{split}
		\]
		In addition,  we have
		\[
		(\mathcal L_{\sigma^{j-1}\omega, \varepsilon}-\mathcal L_{\sigma^{j-1}\omega}) \mathcal L_\omega^{j-1}(f_{\omega, 0}h_\omega)=-\int_{\beta(\sigma^{j-1}\omega)}^{\beta(\sigma^{j-1}\omega)+\varepsilon \delta (\sigma^{j-1}\omega)} (X_\gamma N_\gamma (\mathcal L_\omega^{j-1}(f_{\omega, 0}h_\omega)))'\, d\gamma,
		\]
		since $\partial_\gamma \mathcal L_\gamma(\phi)=-(X_\gamma N_\gamma (\phi))'$ (see~\cite[p.865]{BT}). It follows from $h_\omega \in \mathcal C_*\cap \mathcal C_2$ and
		 \eqref{eq:decomp_c1} that, for $\mathbb P$-a-e. $\omega \in \Omega$ we have $f_{\omega, 0}h_\omega=\psi_1-\psi_2$ for some $\psi_i=\psi_i(\omega)\in \mathcal C_*\cap C^1(0,1]$.  Moreover, $\|\psi_i\|_{L^1}\le K$ for $i=1, 2$, where $K$ does not depend on $\omega$.
		Hence, 
		\[
		\mathcal L_\omega^{j-1}(f_{\omega, 0}h_\omega)=\mathcal L_\omega^{j-1}(\psi_1)-\mathcal L_\omega^{j-1}(\psi_2), \quad \mathcal L_\omega^{j-1}(\psi_i)\in \mathcal C_*\cap C^1(0,1], \ i=1, 2.
		\]
		Consequently
		\begin{align*}
			\Vert 
			(X_\gamma N_\gamma (\mathcal L_\omega^{j-1}(f_{\omega, 0}h_\omega)))'
			\Vert_{L^1(m)}
			\le \sum_{i=1,2} \Vert 
			(X_\gamma N_\gamma (\mathcal L_\omega^{j-1}( \psi_i )))'
			\Vert_{L^1(m)} \le \tilde{K},
		\end{align*}
		where the last inequality is a consequence of bounds on $X_\gamma$ and $X_\gamma'$ in~\cite[(2.3) and (2.4)]{BT}, the invariance of $\cC_* \cap C^1(0,1]$ with respect to $N_\gamma$
		and $\cL_\gamma$ (see \cite[Proposition 5]{DGTS}), and the 
		observation (see~\cite[Remark 1]{DGTS}) that
		\begin{align}\label{eq:cone_fun_diff}
			\phi \in \cC_* \cap C^1(0,1] \implies 
			|\phi'(x)| \le (\alpha + 1) x^{-1} \phi(x) \quad 
			\forall x \in (0,1].
		\end{align}
		This gives that 
		\[
		\int_\Omega \|(\mathcal L_{\sigma^{j-1}\omega, \varepsilon}-\mathcal L_{\sigma^{j-1}\omega}) \mathcal L_\omega^{j-1}(f_{\omega, 0}h_\omega)\|_{L^1(m)}\, d\mathbb P(\omega) \le C|\varepsilon|,
		\]
		for some $C>0$ independent on $\varepsilon$. Thus, 
		\[
		\lim_{\varepsilon \to 0}\int_\Omega \int_0^1 (\mathcal L_{\omega, \varepsilon}^n-\mathcal L_\omega^n)(f_{\omega, 0}h_\omega) f_{\sigma^n \omega, \varepsilon}\, dm\, d\mathbb P(\omega)=0.
		\]
		This completes the proof of the theorem.
	\end{proof}
	
	\section{Differentiability of the variance}

	Take $\beta \colon \Omega\to (0, 1)$, $\delta \colon \Omega \to [0, 1)$ be measurable maps and $\underline{\alpha}>0$ such that $0<\underline{\alpha}<\alpha<\frac 1 2$,
	\[
	0<\underline{\alpha}\le \essinf_{\omega \in \Omega}\beta(\omega)-\varepsilon_0 \quad \text{and} \quad \esssup_{\omega \in \Omega}\beta(\omega)+\varepsilon_0\le \alpha.
	\]
	Let $T_{\omega, \varepsilon}$ be the LSV map with parameter $\beta(\omega)+\varepsilon \delta (\omega)$ for every $\omega \in \Omega$ and $\varepsilon \in (-\varepsilon_0, \varepsilon_0)$. By $(\mu_{\omega, \varepsilon})_{\omega \in \Omega}$ we will denote the random a.c.i.m that corresponds to the cocycle $(T_{\omega, \varepsilon})_{\omega \in \Omega}$, where $h_{\omega, \varepsilon}$ is the density of $\mu_{\omega, \varepsilon}$. As in the previous section,  we will drop the subscript $\varepsilon$ when $\varepsilon=0$. 
	
	We will use the following result, which is a sharper version of~\cite[Theorem 14]{DGTS}. Its proof is given in Appendix C.
	\begin{theorem}\label{sharpl}
		There exists $C=C_{\alpha, \underline{\alpha}}>0$ such that for $\varepsilon$ sufficiently close to $0$,
		\begin{equation}\label{stronger}
			\|\varepsilon^{-1}(h_{\omega, \varepsilon}-h_\omega)-\hat h_\omega \|_{L^1(m)}\le C|\varepsilon|^{1-2\alpha},
		\end{equation}
		for $\mathbb P$-a.e. $\omega \in \Omega$,
		where
		\begin{equation}\label{series}
			\hat h_\omega:=-\sum_{i=0}^\infty \delta (\sigma^{-(i+1)}\omega) \mathcal L_{\sigma^{-i}\omega}^i \left(X_{\beta(\sigma^{-(i+1)}\omega)} N_{\beta(\sigma^{-(i+1)}\omega)} (h_{\sigma^{-(i+1)}\omega}) \right)' \in L^1(m).
		\end{equation}
		Moreover, 
		\begin{equation}\label{series2}
			\esssup_{\omega \in \Omega}\|\hat h_\omega \|_{L^1(m)}<+\infty.
		\end{equation}
	\end{theorem}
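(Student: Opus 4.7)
The plan is to derive the series representation~\eqref{series} by telescoping the equivariance identity $h_{\omega, \varepsilon} = \mathcal L^n_{\sigma^{-n}\omega, \varepsilon} h_{\sigma^{-n}\omega, \varepsilon}$, and then to obtain the quantitative rate~\eqref{stronger} by an optimal truncation. The improvement over \cite[Theorem 14]{DGTS} stems from replacing uniform (non-decaying) bounds on $\mathcal L^i$ with the polynomial decay estimate~\eqref{dec}.

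First I would write
\[
\begin{split}
h_{\omega, \varepsilon} - h_\omega &= \mathcal L^n_{\sigma^{-n}\omega, \varepsilon}\bigl(h_{\sigma^{-n}\omega, \varepsilon} - h_{\sigma^{-n}\omega}\bigr) \\
&\phantom{=} + \sum_{i=0}^{n-1} \mathcal L^i_{\sigma^{-i}\omega, \varepsilon}\bigl(\mathcal L_{\sigma^{-(i+1)}\omega, \varepsilon} - \mathcal L_{\sigma^{-(i+1)}\omega}\bigr) h_{\sigma^{-(i+1)}\omega}.
\end{split}
\]
The boundary term equals $h_{\omega, \varepsilon} - \mathcal L^n_{\sigma^{-n}\omega, \varepsilon} h_{\sigma^{-n}\omega}$ and vanishes as $n \to \infty$ by the loss-of-memory property of the cocycle $(T_{\omega, \varepsilon})$ acting on the probability density $h_{\sigma^{-n}\omega}$. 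Using $\partial_\gamma \mathcal L_\gamma(\phi) = -(X_\gamma N_\gamma \phi)'$, the increment $\mathcal L_{\sigma^{-(i+1)}\omega, \varepsilon} - \mathcal L_{\sigma^{-(i+1)}\omega}$ becomes $-\int_{\beta_i}^{\beta_i + \varepsilon \delta_i}(X_\gamma N_\gamma \cdot)' \, d\gamma$, where $\beta_i = \beta(\sigma^{-(i+1)}\omega)$ and $\delta_i = \delta(\sigma^{-(i+1)}\omega)$. Taking $n \to \infty$ then yields a series representation of $\varepsilon^{-1}(h_{\omega, \varepsilon} - h_\omega)$ whose formal $\varepsilon \to 0$ limit is~\eqref{series}.

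To establish that $\hat h_\omega$ is well defined and that~\eqref{series2} holds, I would observe that $X_\gamma$ vanishes at both $0$ and $1$, so $(X_\gamma N_\gamma h_{\sigma^{-(i+1)}\omega})'$ has zero mean. Moreover, by the argument at the end of the proof of Theorem~\ref{contvariance} (invariance of $\cC_* \cap C^1(0,1]$ under $N_\gamma$ and $\cL_\gamma$, together with the gradient bound~\eqref{eq:cone_fun_diff}), this function can be decomposed as a difference of two elements of $\cC_*$ with $L^1$ norms bounded uniformly in $\omega$ and $\gamma \in [\underline{\alpha}, \alpha]$. Applying~\eqref{dec} componentwise then gives
\[
\Vert \cL^i_{\sigma^{-i}\omega}(X_{\beta_i} N_{\beta_i} h_{\sigma^{-(i+1)}\omega})' \Vert_{L^1(m)} \le C i^{-1/\alpha + 1},
\]
and this tail is summable because $\alpha < 1/2$, proving~\eqref{series2}.

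The principal obstacle is the quantitative rate~\eqref{stronger}. For an integer $N$ to be chosen, I would split
\[
\varepsilon^{-1}(h_{\omega, \varepsilon} - h_\omega) - \hat h_\omega = E_1(N) + E_2(N, \varepsilon) + E_3(N, \varepsilon),
\]
where $E_1$ collects the tails $i \ge N$ of both series; $E_2$ is the Taylor remainder incurred in replacing $\varepsilon^{-1} \int_{\beta_i}^{\beta_i + \varepsilon \delta_i} (X_\gamma N_\gamma h)' \, d\gamma$ by $\delta_i (X_{\beta_i} N_{\beta_i} h)'$ for $i < N$; and $E_3$ captures the replacement of $\mathcal L^i_{\sigma^{-i}\omega, \varepsilon}$ by $\mathcal L^i_{\sigma^{-i}\omega}$ in those first $N$ terms. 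The cone-based decay above gives $\Vert E_1 \Vert_{L^1} \le C N^{2 - 1/\alpha}$; local Lipschitz dependence of $\gamma \mapsto X_\gamma N_\gamma \phi$ yields $\Vert E_2 \Vert_{L^1} \le C N \varepsilon$; and a further telescoping $\mathcal L^i_\varepsilon - \mathcal L^i = \sum_{k=0}^{i-1} \mathcal L^{i-k-1}_\varepsilon(\mathcal L_\varepsilon - \mathcal L) \mathcal L^k$, combined with the fact that each single-step increment $\mathcal L_\varepsilon - \mathcal L$ has $L^1$ operator norm $O(\varepsilon)$ when acting on the relevant cone differences, produces $\Vert E_3 \Vert_{L^1} \le C N^2 \varepsilon$. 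Choosing $N \sim |\varepsilon|^{-\alpha}$ balances $N^2 \varepsilon$ against $N^{2 - 1/\alpha}$ and yields the asserted rate $|\varepsilon|^{1 - 2\alpha}$; the delicate part here is verifying that the cone framework is genuinely preserved through the nested telescoping so that each factor can be estimated independently of $i$.
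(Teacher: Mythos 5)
Your proposal is essentially the same argument as the paper's Appendix~B: telescope the cocycle along backward orbits, Taylor-expand the single-step transfer-operator increment using $\partial_\gamma \mathcal L_\gamma(\phi)=-(X_\gamma N_\gamma\phi)'$, and balance a truncation level $N\sim|\varepsilon|^{-\alpha}$ against the $i^{1-1/\alpha}$ polynomial decay of $\mathcal L^i$ on zero-mean cone differences; the three error terms $E_1, E_2, E_3$ play precisely the roles of \eqref{ap3}(tail)/\eqref{ap2}/\eqref{ap4}--\eqref{ap5}. The only genuine divergence is a modest streamlining: you telescope the identity $h_{\omega,\varepsilon}-h_\omega=\mathcal L^n_{\sigma^{-n}\omega,\varepsilon}h_{\sigma^{-n}\omega,\varepsilon}-\mathcal L^n_{\sigma^{-n}\omega}h_{\sigma^{-n}\omega}$ directly (so that the inner factor is already $h_{\sigma^{-(i+1)}\omega}$), whereas the paper first replaces both densities by the constant function $1$, telescopes on $\mathcal L^{l-1-j}_{\sigma^{-l}\omega}(1)$, and then incurs the additional comparison step \eqref{ap5} replacing $\mathcal L^{l-1-j}(1)$ by $h_{\sigma^{-(j+1)}\omega}$ by taking $l$ large. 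Your variant removes that extra comparison at the cost of having to control the boundary term $\varepsilon^{-1}\mathcal L^n_{\sigma^{-n}\omega,\varepsilon}(h_{\sigma^{-n}\omega,\varepsilon}-h_{\sigma^{-n}\omega})$ as $n\to\infty$, which works because the difference of densities is a zero-mean cone difference and the perturbed cocycle still yields $n^{1-1/\alpha}$ decay (as you correctly note). The delicate point you flag — that $\mathcal C_*\cap C^1(0,1]$ must be preserved through the nested telescopes so each factor is estimated uniformly in $i$ — is indeed the load-bearing step; it holds by \cite[Proposition~5]{DGTS} together with the gradient bound \eqref{eq:cone_fun_diff}, exactly as the paper invokes.
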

	
	\begin{remark}\label{sharplc}
		In Theorem~\ref{sharpl} we show for $\mathbb P$-a.e. $\omega \in \Omega$, the map $ (-\varepsilon_0, \varepsilon_0)\ni \varepsilon \mapsto h_{\omega, \varepsilon}\in L^1(m)$ is differentiable in $\varepsilon=0$. On the other hand, in~\cite[Theorem 14]{DGTS} a weaker statement is proved: for $\mathbb P$-a.e. $\omega \in \Omega$ and $\psi \in L^\infty(m)$, the map $\varepsilon \mapsto \int_0^1\psi h_{\omega, \varepsilon}\, dm$ is differentiable in $\varepsilon=0$. For other quenched linear response results for different classes of random dynamics, we refer to~\cite{CN,DGS, DH2, DS, SR}.
	\end{remark}

    \begin{remark}
An anonymous referee raised a question about the possibility of obtaining higher order derivatives of the map $\varepsilon \mapsto h_{\omega, \varepsilon}$ in $\varepsilon=0$. We emphasize that this problem has not been addressed in the case of autonomous dynamics yet. However, it is briefly mentioned in~\cite[p.859]{BT} that the cone approach developed in~\cite{BT} (used in~\cite{DGTS} and the present paper) is likely to yield such higher order regularity. However, it would require the construction of new families of invariant cones. In addition, we mention that if such result holds, then it would have a somewhat different form than~\cite[Theorem 3.6]{CN}. More precisely, unlike in~\cite{CN}, where the quadratic response holds in the space weaker than that in which we have a linear response, in our setting we should have both in $L^1(m)$.
\end{remark}

	Let $F\colon \Omega \times [0, 1] \to \mathbb R$ be a measurable map such that $F(\omega, \cdot)\in C^2[0, 1]$ for $\omega \in \Omega$ and that
	\begin{equation}\label{observ2}
		\esssup_{\omega \in \Omega}\|F(\omega, \cdot)\|_{C^2}<+\infty.
	\end{equation}
	%\begin{equation}\label{observ2}
	%    \text{esssup}_{\omega \in \Omega}\|F(\omega, \cdot)\|_{C^3}<+\infty.
	%   \end{equation}
Let $f_{\omega, \varepsilon}$ and $\Sigma_\varepsilon^2$ be as in the previous section. Then we have the following theorem.
\begin{theorem}\label{diffvar}
	Let $\alpha <\frac 1 5$. Then, 
	the map $\varepsilon \mapsto \Sigma_\varepsilon^2$ is differentiable in $0$.
\end{theorem}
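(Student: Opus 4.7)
The plan is to extend the continuity argument of Theorem~\ref{contvariance} into a first-order expansion in $\varepsilon$, using Theorem~\ref{sharpl} in place of Proposition~\ref{statstab}. Writing
\[
\Sigma_\varepsilon^2 = I_0(\varepsilon) + 2 \sum_{n=1}^\infty I_n(\varepsilon), \qquad I_n(\varepsilon) := \int_\Omega \int_0^1 \mathcal L_{\omega, \varepsilon}^n (f_{\omega, \varepsilon} h_{\omega, \varepsilon}) f_{\sigma^n \omega, \varepsilon} \, dm \, d\mathbb P(\omega),
\]
the task reduces to (a) defining a candidate derivative $D\Sigma^2 = I_0'(0) + 2 \sum_{n=1}^\infty I_n'(0)$ as an absolutely convergent series, and (b) showing that $\Sigma_\varepsilon^2 - \Sigma_0^2 - \varepsilon D\Sigma^2 = o(\varepsilon)$ as $\varepsilon \to 0$.

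To identify $I_n'(0)$ I formally differentiate each $\varepsilon$-dependent factor at $\varepsilon = 0$. Theorem~\ref{sharpl} yields $\partial_\varepsilon h_{\omega, \varepsilon}|_0 = \hat h_\omega$ and hence $\partial_\varepsilon f_{\omega, \varepsilon}|_0 = -\int_0^1 F_\omega \hat h_\omega \, dm$, while the telescoping used in the proof of Theorem~\ref{contvariance}, combined with $\partial_\gamma \mathcal L_\gamma = -(X_\gamma N_\gamma \cdot)'$, gives
\[
\partial_\varepsilon \mathcal L_{\omega, \varepsilon}^n|_0(\phi) = -\sum_{j=1}^n \mathcal L_{\sigma^j \omega}^{n-j}\bigl(\delta(\sigma^{j-1}\omega)\, \bigl(X_{\beta(\sigma^{j-1}\omega)} N_{\beta(\sigma^{j-1}\omega)} \mathcal L_\omega^{j-1}(\phi)\bigr)'\bigr).
\]
Summability of the resulting series is proved using the cone machinery already deployed in Theorem~\ref{contvariance}: the decomposition~\eqref{eq:decomp_c1} places $\mathcal L_\omega^{j-1}(f_{\omega, 0} h_\omega)$ (up to signs) in $\mathcal C_* \cap C^1(0, 1]$, the cone is invariant under $N_\gamma$ and $\mathcal L_\gamma$ by~\cite[Proposition~5]{DGTS}, the bound~\eqref{eq:cone_fun_diff} controls the $(X_\gamma N_\gamma \cdot)'$ output in $L^1(m)$, and~\eqref{dec} applied after a further cone decomposition supplies $(n-j)^{-1/\alpha+1}$ decay; the $\omega$-uniform bound~\eqref{series2} handles the remaining randomness.

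For the remainder I expand $I_n(\varepsilon) - I_n(0) - \varepsilon I_n'(0)$ multilinearly into three kinds of terms: second-order pieces involving $h_{\omega, \varepsilon} - h_\omega - \varepsilon \hat h_\omega$, bounded in $L^1$ by $|\varepsilon|^{2-2\alpha}$ through Theorem~\ref{sharpl}; cross products of two first-order perturbations of the factors $h_{\omega,\varepsilon}$ and $f_{\omega,\varepsilon}$; and the second-order telescoping remainder for $\mathcal L_{\omega, \varepsilon}^n - \mathcal L_\omega^n - \varepsilon \partial_\varepsilon \mathcal L_{\omega, \varepsilon}^n|_0$, which produces a double sum over $1 \le j_1 < j_2 \le n$. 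This is the same architecture as in~\cite[Theorem 12]{DH2}, but the role played there by the $C^0$--$C^1$--$C^3$ Banach scale is now taken by the $L^1(m)$-based estimate~\eqref{dec}, the cone invariance, and the quantitative linear-response bound~\eqref{stronger}.

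The main obstacle is the summation over $n$: naive combination of the above estimates is not $o(\varepsilon)$, because the telescoping contributes factors of order $n$ (and $n^2$ at second order), the linear-response error is only $|\varepsilon|^{2-2\alpha}$ rather than $|\varepsilon|^2$, and the correlation decay $n^{-1/\alpha+1}$ is merely polynomial. I plan to resolve this by an $\varepsilon$-dependent cutoff $n \le N_\varepsilon$, applying the refined second-order expansion for $n \le N_\varepsilon$ and falling back to the first-order bound supplied by the proof of Theorem~\ref{contvariance} for $n > N_\varepsilon$, with $N_\varepsilon$ chosen to balance the two tails. It is precisely at this optimization step that the restriction $\alpha < 1/5$ enters as the condition under which both partial sums can simultaneously be absorbed into $o(\varepsilon)$; the strictly narrower range compared with $\alpha < 1/2$ in Theorem~\ref{contvariance} reflects the additional polynomial-in-$n$ cost of iterating the telescoping of $\mathcal L^n$ to second order.
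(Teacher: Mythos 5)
Your overall architecture — split the series at an $\varepsilon$-dependent cutoff $N_\varepsilon$, treat the head with a refined expansion and the tail with a crude bound, and trace the origin of $\alpha<1/5$ to this balancing — does match the spirit of the paper, which cuts at $n<|\varepsilon|^{-\gamma}$ with $\gamma\in(0,1/2)$. But you commit to a genuinely different strategy in the head regime: you want a full second-order Taylor expansion of $I_n(\varepsilon)$ with an explicit remainder bound of order $n^2|\varepsilon|^2$, whereas the paper never estimates a second-order remainder for summation purposes. Instead it works directly with the difference quotients $D_n(\varepsilon)=\varepsilon^{-1}(C_n(\varepsilon)-C_n(0))$, shows each has a pointwise limit as $\varepsilon\to 0$, and produces a summable dominating sequence $u_n$ valid uniformly for $n<|\varepsilon|^{-\gamma}$; the $n^2|\varepsilon|$ bound on the genuine second-order telescoping piece $\mathcal D^1_{n,2}(\varepsilon)$ is used only to pass to the limit at fixed $n$ (Lemma~\ref{limitexists2}), not in the summation.

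The step I do not believe goes through as you describe is the summability of the candidate derivative $\sum_n I_n'(0)$, which you also need for the tail bound $\sum_{n>N_\varepsilon}|\varepsilon|\,|I_n'(0)|=o(\varepsilon)$. You write that the cone decomposition \eqref{eq:decomp_c1}, the invariance under $N_\gamma$ and $\mathcal L_\gamma$, the bound \eqref{eq:cone_fun_diff}, and \eqref{dec} ``supply $(n-j)^{-1/\alpha+1}$ decay.'' That chain only uses the decay of the \emph{outer} composition $\mathcal L^{n-j}$, and after summing over $j=0,\dots,n-1$ it yields
\[
|I_n'(0)|\le C\sum_{j=0}^{n-1}(n-j)^{-1/\alpha+1}\le C\sum_{k\ge 1}k^{-1/\alpha+1}=O(1),
\]
a bound that is uniform in $n$ but does not decay, so the series need not converge. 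The reason you cannot use the smallness of $\|\mathcal L^j_\omega(f_{\omega,0}h_\omega)\|_{L^1}\lesssim j^{-1/\alpha+1}$ directly is that the $L^1\to L^1$ bound on $\partial_\gamma\mathcal L_\gamma$ in Lemma~\ref{auxiliarly3} holds only for inputs in $\mathcal C_*\cap C^1(0,1]$; $\mathcal L^j_\omega(f_{\omega,0}h_\omega)$ is a \emph{difference} of such elements whose individual $L^1$ norms are of order one, with no cancellation surviving under $\partial_\gamma\mathcal L_\gamma$. This is precisely the obstacle that the paper resolves, in Lemma~\ref{summability2}, by the finite-difference (``$r$-optimization'') trick: replace $\partial_\gamma\mathcal L_{\beta(\sigma^j\omega)}$ by $r^{-1}(\mathcal L_{\beta(\sigma^j\omega)+r}-\mathcal L_{\beta(\sigma^j\omega)})$, which \emph{is} $L^1$-bounded and therefore sees the $j^{-1/\alpha+1}$ smallness, at the cost of an $O(r)$ error on cone elements, and then balance with $r=n^{-(1-\alpha)/(2\alpha)}$ for $j\ge\lfloor n/2\rfloor$. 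This yields $|I_j|\le C\,n^{-(1-\alpha)/(2\alpha)}$ and hence $|D^1_{n,1}(\varepsilon)|\le C\,n^{1-(1-\alpha)/(2\alpha)}$, which is summable exactly for $\alpha<1/5$. Without some version of this step your $\sum_n I_n'(0)$ need not exist, your tail estimate loses control (since $\sum_{n>N_\varepsilon}|I_n'(0)|$ may diverge), and the cutoff optimization cannot be carried out. The same issue appears in your head regime, since my tentative computation giving compatibility at $\alpha<1/5$ already assumed $u_n:=|I_n'(0)|$ summable; that is precisely what is unproved.
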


\begin{remark}\label{formula}
	We note that the proof of Theorem~\ref{diffvar} yields  
	\[
	\begin{split}
		\frac{d}{d\ve}\Sigma_\ve^2 \big\rvert_{\ve=0} &=\int_\Omega \int_0^1 f_{\omega, 0}^2 \hat h_\omega \, dm\, d\mathbb P(\omega)-2\int_\Omega \left (\int_0^1 F_\omega \hat h_\omega \, dm\right )\int_0^1 f_{\omega, 0}\, d\mu_\omega \, d\mathbb P(\omega) \\
		&\phantom{=}+\sum_{n=1}^\infty \int_\Omega \int_0^1 \mathcal L_\omega^n (-L(\omega, F_\omega)h_\omega+f_{\omega, 0}\hat h_\omega)f_{\sigma^n \omega, 0}\, dm\, d\mathbb P(\omega) \\
		&\phantom{=}+\sum_{n=1}^\infty \sum_{j=0}^{n-1}\int_\Omega \delta (\sigma^j \omega)\int_0^1\mathcal L_{\sigma^{j+1}\omega}^{n-j-1} \left (\partial_\gamma \mathcal L_{\beta(\sigma^j \omega)}(\mathcal L_\omega^j(f_{\omega, 0}h_\omega))\right )f_{\sigma^n \omega, 0}\, dm\, d\mathbb P(\omega),
	\end{split}
	\]
	where $L(\omega, F_\omega):=\int_0^1 F_\omega \hat h_\omega \, dm$. 
	The proof of Theorem~\ref{diffvar} shows that this formula for 
	the derivative of 
	$\Sigma_\ve^2$ is well-defined 
	under the assumption $\alpha < 1/5$. In particular, Lemma~\ref{summability2} ensures the summability
	\begin{align}\label{eq:diff_last_term}
		\sum_{n=1}^\infty \sum_{j=0}^{n-1}\int_\Omega \int_0^1  |\mathcal L_{\sigma^{j+1}\omega}^{n-j-1} \left (\partial_\gamma \mathcal L_{\beta(\sigma^j \omega)}(\mathcal L_\omega^j(f_{\omega, 0}h_\omega))\right ) | \, dm\, d\mathbb P(\omega) < \infty.
	\end{align}
	We do not know whether \eqref{eq:diff_last_term} remains valid beyond 
	the range $\alpha < 1/5$.
\end{remark}

\begin{remark}
	For a special class of observables as in 
	Theorem \ref{qclt} (ii), we can can weaken the constraint on 
	$\alpha$ to $\alpha < 1/4 + \delta$ for suitable $\delta > 0$.
	See Theorem \ref{thm:special} for a precise statement.
\end{remark}

The proof of Theorem~\ref{diffvar} will be established in a series of auxiliary results.  
%We begin by establishing the differentiability of the first term in~\eqref{Sigmavar}.
\subsection{Differentiability of the first term in~\eqref{Sigmavar}}
\begin{lemma}
	The map
	\begin{equation}\label{6:05}
		\varepsilon \to \int_\Omega \int_0^1f_{\omega, \varepsilon}^2 \, d\mu_{\omega, \varepsilon}\, d\mathbb P(\omega)
	\end{equation}
	is differentiable in $\varepsilon=0$.
\end{lemma}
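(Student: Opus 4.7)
The strategy is to reduce the problem to the differentiability of $\varepsilon \mapsto \int_0^1 G h_{\omega,\varepsilon}\, dm$ for $G \in \{F_\omega, F_\omega^2\}$, invoke Theorem~\ref{sharpl} fiberwise, and then use a uniform-in-$\omega$ error bound to exchange the limit with $\int_\Omega d\mathbb P$.

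First, I would exploit that each $h_{\omega,\varepsilon}$ is a probability density and that $f_{\omega,\varepsilon} = F_\omega - \int_0^1 F_\omega\, d\mu_{\omega,\varepsilon}$ to obtain the algebraic identity
\[
\int_0^1 f_{\omega,\varepsilon}^2 \, d\mu_{\omega,\varepsilon} = \int_0^1 F_\omega^2 h_{\omega,\varepsilon}\,dm - \biggl(\int_0^1 F_\omega h_{\omega,\varepsilon}\,dm\biggr)^{\!2}.
\]
It therefore suffices to differentiate $\varepsilon \mapsto \int_\Omega \int_0^1 F_\omega^k h_{\omega,\varepsilon}\, dm\, d\mathbb P(\omega)$ at $\varepsilon = 0$ for $k = 1, 2$.

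Next, Theorem~\ref{sharpl} supplies, for $\mathbb P$-a.e.\ $\omega$ and a constant $C = C_{\alpha,\underline\alpha}$ independent of $\omega$, the estimate $\|h_{\omega,\varepsilon} - h_\omega - \varepsilon \hat h_\omega\|_{L^1(m)} \le C|\varepsilon|^{2-2\alpha}$. Combined with the essential uniform bound $\|F_\omega\|_{C^2} \le M$ from~\eqref{observ2}, pairing with $F_\omega^k$ gives
\[
\int_0^1 F_\omega^k h_{\omega,\varepsilon}\, dm = \int_0^1 F_\omega^k h_\omega\, dm + \varepsilon \int_0^1 F_\omega^k \hat h_\omega\, dm + r_k(\omega,\varepsilon),
\]
with $|r_k(\omega,\varepsilon)| \le C'|\varepsilon|^{2-2\alpha}$ uniformly in $\omega$. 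Since $\alpha < 1/2$ in the ambient setup of Section~4, the exponent $2-2\alpha$ exceeds $1$, so $\varepsilon^{-1} r_k(\omega,\varepsilon) \to 0$ uniformly. Together with~\eqref{series2}, which bounds the linear correction uniformly in $\omega$, this allows me to pass to the limit under $\int_\Omega d\mathbb P$ by bounded convergence, giving differentiability with derivative $\int_\Omega \int_0^1 F_\omega^k \hat h_\omega\, dm\, d\mathbb P$.

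Finally, assembling the two derivatives and using $\int_0^1 \hat h_\omega\, dm = 0$ (obtained by differentiating the normalization $\int h_{\omega,\varepsilon}\, dm = 1$ at $\varepsilon = 0$), a short calculation shows the fiberwise derivative collapses to $\int_0^1 f_{\omega,0}^2 \hat h_\omega\, dm$, in agreement with the first term of the derivative formula in Remark~\ref{formula} (the factor $\int f_{\omega,0}\, d\mu_\omega$ appearing there vanishes by centering, so that term contributes nothing). The only subtle point in this argument is the uniform-in-$\omega$ control of the remainder, but this is delivered directly by Theorem~\ref{sharpl}; no additional cone or transfer-operator analysis enters at this stage, which is why the lemma admits a short proof compared with the subsequent steps needed for Theorem~\ref{diffvar}.
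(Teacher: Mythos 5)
Your proof is correct, and it takes a genuinely different route from the paper's. The paper works directly with the difference $f_{\omega,\ve}^2 h_{\omega,\ve}-f_{\omega,0}^2 h_\omega$, telescoping it into $(f_{\omega,\ve}^2-f_{\omega,0}^2)\,d\mu_{\omega,\ve}+f_{\omega,0}^2(h_{\omega,\ve}-h_\omega)\,dm$, handling the first piece with a further $\mu_{\omega,\ve}\leftrightarrow\mu_\omega$ swap (costing $O(\ve^2)$ via Proposition~\ref{statstab}) and the second piece directly with Theorem~\ref{sharpl}. You instead collapse the $\ve$-dependence up front via the variance identity $\int f_{\omega,\ve}^2\,d\mu_{\omega,\ve}=\int F_\omega^2 h_{\omega,\ve}\,dm-\bigl(\int F_\omega h_{\omega,\ve}\,dm\bigr)^2$, which reduces the whole problem to differentiating linear functionals $\ve\mapsto\int F_\omega^k h_{\omega,\ve}\,dm$; then only Theorem~\ref{sharpl} is needed (Proposition~\ref{statstab} becomes unnecessary), and the quadratic piece is handled by the chain rule with uniform error control. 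Both proofs hinge on the same analytic input (the uniform $|\ve|^{1-2\alpha}$ rate in Theorem~\ref{sharpl} and $2-2\alpha>1$), and both correctly observe, at the end, that the cross term vanishes: you argue $\int_0^1\hat h_\omega\,dm=0$ from differentiating the normalization $\int h_{\omega,\ve}\,dm\equiv 1$, the paper is left with a factor $\int_0^1 f_{\omega,0}\,d\mu_\omega=0$; these are equivalent simplifications. Your version is slightly cleaner structurally; the paper's version has the advantage of mirroring the decompositions that recur in the later, more delicate lemmas for the correlation series. One minor caveat worth keeping in mind (shared by both proofs) is that passing the limit under $\int_\Omega d\mathbb P$ requires the ``$\ve$ sufficiently close to $0$'' in Theorem~\ref{sharpl} to be uniform in $\omega$, which the notation $C_{\alpha,\underline\alpha}$ and the proof in Appendix B support.
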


\begin{proof}
	Writing 
	\[
	\begin{split}
		&\varepsilon^{-1}\left (\int_\Omega \int_0^1f_{\omega, \varepsilon}^2 \, d\mu_{\omega, \varepsilon}\, d\mathbb P(\omega)-\int_\Omega \int_0^1f_{\omega, 0}^2 \, d\mu_{\omega}\, d\mathbb P(\omega)\right ) \\
		&=\varepsilon^{-1}\int_\Omega\int_0^1 (f_{\omega, \varepsilon}^2-f_{\omega, 0}^2)\, d\mu_{\omega, \varepsilon}\, d\mathbb P(\omega)+\varepsilon^{-1}\int_\Omega\int_0^1f_{\omega, 0}^2(h_{\omega, \varepsilon}-h_{\omega})\, dm\, d\mathbb P(\omega),
	\end{split}
	\]
	it follows from~\eqref{stronger} that 
	\[
	\lim_{\varepsilon \to 0}\frac{1}{\varepsilon}\int_\Omega \int_0^1f_{\omega, 0}^2(h_{\omega, \varepsilon}-h_{\omega, 0})\, dm\, d\mathbb P(\omega)=\int_\Omega \int_0^1 f_{\omega, 0}^2\hat h_\omega\, dm\, d\mathbb P(\omega), 
	\]
	since 
	\[
	\begin{split}
		&\left |\int_\Omega \int_0^1f_{\omega, 0}^2(\varepsilon^{-1}(h_{\omega, \varepsilon}-h_{\omega, 0}))\, dm\, d\mathbb P(\omega)-\int_\Omega \int_0^1 f_{\omega, 0}^2\hat h_\omega\, dm\, d\mathbb P(\omega)\right | \\
		&\le \int_\Omega \|f_{\omega, 0}^2\|_{C^0}\cdot \|\varepsilon^{-1}(h_{\omega, \varepsilon}-h_{\omega, 0})-\hat h_\omega\|_{L^1(m)}\, d\mathbb P(\omega) \le C_{\alpha, \underline{\alpha}, F}|\varepsilon|^{1-2\alpha},
	\end{split}
	\]
	for $\varepsilon$ sufficiently close to $0$.
	Next, we note that 
	\[
	\lim_{\varepsilon\to 0}\frac{1}{\varepsilon}\left (\int_\Omega\int_0^1 (f_{\omega, \varepsilon}^2-f_{\omega, 0}^2)\, d\mu_{\omega, \varepsilon}\, d\mathbb P(\omega)-\int_\Omega\int_0^1 (f_{\omega, \varepsilon}^2-f_{\omega, 0}^2)\, d\mu_{\omega}\, d\mathbb P(\omega)\right )=0,
	\]
	as
	\[
	\begin{split}
		&\left |\int_\Omega\int_0^1 (f_{\omega, \varepsilon}^2-f_{\omega, 0}^2)\, d\mu_{\omega, \varepsilon}\, d\mathbb P(\omega)-\int_\Omega\int_0^1 (f_{\omega, \varepsilon}^2-f_{\omega, 0}^2)\, d\mu_{\omega}\, d\mathbb P(\omega)\right | \\
		&\le \int_\Omega \|f_{\omega, \varepsilon}^2-f_{\omega, 0}^2\|_{C^0}\|h_{\omega, \varepsilon}-h_\omega \|_{L^1(m)}\, d\mathbb P(\omega) \\
		&\le C_{\alpha, F}|\varepsilon|^2,
	\end{split}
	\]
	for some $C_{\alpha, F}>0$ where we used~\eqref{difference1} and Proposition~\ref{statstab}.
	Finally, 
	\[
	\lim_{\varepsilon \to 0}\frac{1}{\varepsilon}\int_\Omega\int_0^1 (f_{\omega, \varepsilon}^2-f_{\omega, 0}^2)\, d\mu_{\omega}\, d\mathbb P(\omega)=-2\int_\Omega \left (\int_0^1F_\omega \hat h_\omega\, dm\right )\int_0^1 f_{\omega, 0}\, d\mu_\omega\, d\mathbb P(\omega),
	\]
	since 
	\[
	\begin{split}
		&\left \|\varepsilon^{-1}(f_{\omega, \varepsilon}^2-f_{\omega, 0}^2)+2\left  (\int_0^1F_\omega \hat h_\omega\, dm\right )f_{\omega, 0} \right \|_{C^0} \\
		&\le \left |\varepsilon^{-1}(f_{\omega, \varepsilon}-f_{\omega, 0})+\int_0^1 F_\omega \hat h_\omega\, dm\right | \cdot \|f_{\omega, \varepsilon}+f_{\omega, 0}\|_{C^0}+\left |\int_0^1 F_\omega \hat h_\omega \, dm\right | \cdot |f_{\omega, \varepsilon}-f_{\omega, 0}| \\
		&\le C_{\alpha, \underline{\alpha}, F}|\varepsilon|^{1-2\alpha}+C_{\alpha, F} |\varepsilon|,
	\end{split}
	\]
	for $\varepsilon$ sufficiently close to $0$ where we used~\eqref{difference1}, Proposition~\ref{statstab} and Theorem~\ref{sharpl}.
	Thus, the map in~\eqref{6:05} is differentiable in $\varepsilon=0$.
\end{proof}

\subsection{Differentiability of the second term in~\eqref{Sigmavar}}
Next, we consider the second term in~\eqref{Sigmavar}:
\[
\sum_{n=1}^\infty \int_\Omega \int_0^1 (f_{\sigma^n \omega, \varepsilon}\circ T_{\omega, \varepsilon}^n)f_{\omega, \varepsilon}\, d\mu_{\omega, \varepsilon}\, d\mathbb P(\omega)=\sum_{n=1}^\infty \int_\Omega \int_0^1 \mathcal L_{\omega, \varepsilon}^n (f_{\omega, \varepsilon}h_{\omega, \varepsilon})f_{\sigma^n \omega, \varepsilon}\, dm\, d\mathbb P(\omega).
\]
Let 
\[
C_n(\varepsilon):=\int_\Omega \int_0^1 \mathcal L_{\omega, \varepsilon}^n (f_{\omega, \varepsilon}h_{\omega, \varepsilon})f_{\sigma^n \omega, \varepsilon}\, dm\, d\mathbb P(\omega),
\]
and
\[
D_n(\varepsilon):=\frac{C_n(\varepsilon)-C_n(0)}{\varepsilon}.
\]
Observe that 
\[
|C_n(\varepsilon)| \le \int_\Omega \|\mathcal L_{\omega, \varepsilon}^n(f_{\omega, \varepsilon}h_{\omega, \varepsilon} )\|_{L^1(m)} \cdot \|f_{\sigma^n \omega, \varepsilon}\|_{C^0}\, d\mathbb P(\omega) \le C_{\alpha, F} n^{-1/\alpha+1}.
\]
Consequently, 
\[
|D_n(\varepsilon)| \le C_{\alpha, F} |\varepsilon|^{-1}n^{-1/\alpha+1}.
\]
Let us now take $\gamma \in (0, \frac 1 2)$ such that 
\begin{equation}\label{gamma1}
	\gamma (1/\alpha-2)>1.
\end{equation}
We note that this is possible since $\alpha <\frac 1 4$.
%In the sequel, $C>0$ will denote a generic constant independent on $\varepsilon$ and $n$ (which can change from one occurrence to the next).
Then, we have
\[
\left |\sum_{n\ge |\varepsilon|^{-\gamma}}D_n(\varepsilon)\right |\le C|\varepsilon|^{\gamma (1/\alpha-2)-1},
\]
where $C>0$ is not dependent on $\varepsilon$.
Hence, due to~\eqref{gamma1} we have
\begin{equation}\label{w00}
	\lim_{\varepsilon \to 0}\sum_{n\ge |\varepsilon|^{-\gamma}}D_n(\varepsilon)=0.
\end{equation}

Next,  note that 
\begin{equation}\label{w22}
	\begin{split}
		D_n(\varepsilon)
		&=\int_\Omega \int_0^1 \mathcal L_\omega^n(f_{\omega, 0}h_\omega)(\varepsilon^{-1}(f_{\sigma^n \omega, \varepsilon}-f_{\sigma^n \omega, 0}))\, dm\, d\mathbb P(\omega)\\
		&\phantom{=}+\frac{1}{\varepsilon}\int_\Omega \int_0^1 (\mathcal L_{\omega, \varepsilon}^n-\mathcal L_\omega^n)(f_{\omega, 0}h_\omega) f_{\sigma^n \omega, \varepsilon}\, dm\, d\mathbb P(\omega)\\
		&\phantom{=}+\int_\Omega \int_0^1 \mathcal L_{\omega, \varepsilon}^n(\varepsilon^{-1}(f_{\omega, \varepsilon}h_{\omega, \varepsilon}-f_{\omega, 0}h_\omega)) f_{\sigma^n \omega, \varepsilon}\, dm\, d\mathbb P(\omega)\\
		&=\frac{1}{\varepsilon}\int_\Omega \int_0^1 (\mathcal L_{\omega, \varepsilon}^n-\mathcal L_\omega^n)(f_{\omega, 0}h_\omega) f_{\sigma^n \omega, \varepsilon}\, dm\, d\mathbb P(\omega)\\
		&\phantom{=}+\int_\Omega \int_0^1 \mathcal L_{\omega, \varepsilon}^n(\varepsilon^{-1}(f_{\omega, \varepsilon}h_{\omega, \varepsilon}-f_{\omega, 0}h_\omega)) f_{\sigma^n \omega, \varepsilon}\, dm\, d\mathbb P(\omega)\\
		&=:D_n^1(\varepsilon)+D_n^2(\varepsilon),
	\end{split}
\end{equation}
since as in the proof of Theorem~\ref{contvariance} we have 
\[
\int_\Omega \int_0^1 \mathcal L_\omega^n(f_{\omega, 0}h_\omega)(\varepsilon^{-1}(f_{\sigma^n \omega, \varepsilon}-f_{\sigma^n \omega, 0}))\, dm\, d\mathbb P(\omega)=0.
\]
We first focus on $D_n^2(\varepsilon)$. Taking into account~\eqref{difference1} and
\[
\int_0^1 \mathcal L_{\omega, \varepsilon}^n (f_{\omega, \varepsilon}h_{\omega, \varepsilon}-f_{\omega, 0}h_\omega)\, dm=\int_0^1 (f_{\omega, \varepsilon}h_{\omega, \varepsilon}-f_{\omega, 0}h_\omega)\, dm=0,
\]
we have
\[
D_n^2(\varepsilon)=\int_\Omega \int_0^1 \mathcal L_{\omega, \varepsilon}^n(\varepsilon^{-1}(f_{\omega, \varepsilon}h_{\omega, \varepsilon}-f_{\omega, 0}h_\omega)) f_{\sigma^n \omega, 0}\, dm\, d\mathbb P(\omega).
\]
%Then, 
%\[
%\begin{split}
%|D_n^3(\varepsilon)-\bar{D}_n^3(\varepsilon)| &\le \int_\Omega \|\mathcal L_{\omega, \varepsilon}(f_{\omega, \varepsilon}h_{\omega, \varepsilon}-f_{\omega, 0}h_\omega)\|_{L^1(m)}|\varepsilon^{-1}(f_{\omega, \varepsilon}-f_{\omega, 0})|\, d\mathbb P(\omega) \\
%&=\int_\Omega \|\mathcal L_{\omega, \varepsilon}(\varepsilon^{-1}(f_{\omega, \varepsilon}h_{\omega, \varepsilon}-f_{\omega, 0}h_\omega))\|_{L^1(m)}|f_{\omega, \varepsilon}-f_{\omega, 0}|\, d\mathbb P(\omega)
%\end{split}
%\]
%Since $\|\mathcal L_{\omega, \varepsilon}(f_{\omega, \varepsilon}h_{\omega, \varepsilon}-f_{\omega, 0}h_\omega)\|_{L^1(m)}\le Cn^{-1/\alpha+1}$, we have $|D_n^3(\varepsilon)-\bar{D}_n^3(\varepsilon)|\le Cn^{-1/\alpha+1}$. On the other hand,
%\[
%\begin{split}
%\|\mathcal L_{\omega, \varepsilon}(\varepsilon^{-1}(f_{\omega, \varepsilon}h_{\omega, \varepsilon}-f_{\omega, 0}h_\omega))\|_{L^1(m)} &\le \frac{1}{|\varepsilon|}\|f_{\omega, \varepsilon}h_{\omega, \varepsilon}-f_{\omega, 0}h_\omega\|_{L^1(m)}\\
%&\le \frac{1}{|\varepsilon|}\|f_{\omega, \varepsilon}\|_{C^0}\|h_{\omega, \varepsilon}-h_{\omega}\|_{L^1(m)}+\frac{1}{|\varepsilon|}|f_{\omega, \varepsilon}-f_{\omega, 0}|\\
%&\le C,
%\end{split}
%\]
%and hence
%\[
%|D_n^3(\varepsilon)-\bar {D}_n^\varepsilon(n)|\le C|\varepsilon|. 
%\]
%Consequently, 
%\[
%\lim_{\varepsilon \to 0}\left |\sum_{n<|\varepsilon|^{-\gamma}}D_n^3(\varepsilon)-\sum_{n<|\varepsilon|^{-\gamma}}\bar{D}^3_n(\varepsilon)\right |=0.
%\]
%We now focus on $\bar{D}_n^3(\varepsilon)$. 
Let
\[
\bar D_n^2(\varepsilon):=\int_\Omega \int_0^1\mathcal L_{\omega, \varepsilon}^n(-L(\omega, F_\omega)h_\omega+f_{\omega,0}\hat h_\omega)f_{\sigma^n \omega, 0}\, dm\, d\mathbb P(\omega),
\]
where $L(\omega, F_\omega):=\int_0^1 F_\omega \hat h_\omega\, dm$.
\begin{lemma}\label{837n}
	We have
	\[
	|D_n^2(\varepsilon)-\bar D_n^2(\varepsilon)|\le C_{\alpha, \underline{\alpha}, F}|\varepsilon|^{1-2\alpha},
	\]
	for $\varepsilon$ sufficiently close to $0$.
\end{lemma}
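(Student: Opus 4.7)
The strategy is to bound, uniformly in $n$, the $L^1(m)$-norm of the argument inside $\mathcal L_{\omega,\varepsilon}^n$ in the expression $D_n^2(\varepsilon) - \bar D_n^2(\varepsilon)$, and then exploit both the $L^1$-contraction of the transfer operator and the uniform bound on $\|f_{\sigma^n\omega, 0}\|_{C^0}$ to integrate out the remaining factors. The cleanest way to write this is
\[
D_n^2(\varepsilon) - \bar D_n^2(\varepsilon) = \int_\Omega \int_0^1 \mathcal L_{\omega,\varepsilon}^n(\Psi_{\omega,\varepsilon})\, f_{\sigma^n\omega, 0}\, dm\, d\mathbb P(\omega),
\]
with $\Psi_{\omega,\varepsilon} := \varepsilon^{-1}(f_{\omega,\varepsilon}h_{\omega,\varepsilon} - f_{\omega,0}h_\omega) - (-L(\omega, F_\omega)h_\omega + f_{\omega,0}\hat h_\omega)$.

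To handle $\Psi_{\omega,\varepsilon}$, I would first split $f_{\omega,\varepsilon}h_{\omega,\varepsilon} - f_{\omega,0}h_\omega = f_{\omega,\varepsilon}(h_{\omega,\varepsilon}-h_\omega) + (f_{\omega,\varepsilon} - f_{\omega,0}) h_\omega$, and use~\eqref{difference1} to rewrite $\varepsilon^{-1}(f_{\omega,\varepsilon}-f_{\omega,0}) = -\int_0^1 F_\omega\cdot \varepsilon^{-1}(h_{\omega,\varepsilon}-h_\omega)\, dm$. Adding and subtracting to align with the target expression yields the decomposition
\[
\Psi_{\omega,\varepsilon} = f_{\omega,\varepsilon}\bigl(\varepsilon^{-1}(h_{\omega,\varepsilon}-h_\omega) - \hat h_\omega\bigr) + (f_{\omega,\varepsilon}-f_{\omega,0})\hat h_\omega - h_\omega\int_0^1 F_\omega\bigl(\varepsilon^{-1}(h_{\omega,\varepsilon}-h_\omega)-\hat h_\omega\bigr)\, dm.
\]
Then~\eqref{stronger} gives $\|\varepsilon^{-1}(h_{\omega,\varepsilon}-h_\omega)-\hat h_\omega\|_{L^1(m)} \le C|\varepsilon|^{1-2\alpha}$, Proposition~\ref{statstab} combined with~\eqref{difference1} gives $|f_{\omega,\varepsilon}-f_{\omega,0}| \le C|\varepsilon|$, the bound~\eqref{observ2} gives essential uniformity of $\|F_\omega\|_{C^0}$ and $\|f_{\omega,\varepsilon}\|_{C^0}$, \eqref{series2} gives essential uniformity of $\|\hat h_\omega\|_{L^1(m)}$, and $\|h_\omega\|_{L^1(m)}=1$. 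Putting these estimates together, each of the three terms of $\Psi_{\omega,\varepsilon}$ has $L^1(m)$-norm at most $C_{\alpha,\underline\alpha, F}|\varepsilon|^{1-2\alpha}$ for $\mathbb P$-a.e. $\omega$ and $\varepsilon$ sufficiently close to $0$ (the middle term is in fact $O(|\varepsilon|)$, but it is absorbed into the worst rate).

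Finally, I would apply the $L^1$-contraction $\|\mathcal L_{\omega,\varepsilon}^n\Psi_{\omega,\varepsilon}\|_{L^1(m)} \le \|\Psi_{\omega,\varepsilon}\|_{L^1(m)}$ together with the uniform bound $\|f_{\sigma^n\omega,0}\|_{C^0} \le 2\|F_{\sigma^n\omega}\|_{C^0}$ coming from~\eqref{observ2}, and integrate over $\Omega$ against $\mathbb P$ to obtain the claimed estimate. The only real subtlety is the algebraic manipulation yielding the three-term decomposition of $\Psi_{\omega,\varepsilon}$, which is designed so that every remaining piece either contains the difference $\varepsilon^{-1}(h_{\omega,\varepsilon}-h_\omega)-\hat h_\omega$ (controlled by Theorem~\ref{sharpl}) or is a term of order $|\varepsilon|$; the uniform-in-$n$ rate $|\varepsilon|^{1-2\alpha}$ is thus inherited directly from the sharpened linear response of Theorem~\ref{sharpl}, whose improvement over~\cite[Theorem 14]{DGTS} is precisely what makes this estimate (and subsequently the differentiability of $\Sigma_\varepsilon^2$) possible.
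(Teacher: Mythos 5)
Your proof is correct and follows essentially the same strategy as the paper's: reduce the problem to bounding the $L^1(m)$-norm of the integrand $\Psi_{\omega,\varepsilon}$ uniformly in $n$, exploit the $L^1$-contraction of the transfer operator and the uniform $C^0$-bound on $f_{\sigma^n\omega,0}$, and control $\Psi_{\omega,\varepsilon}$ via a three-term algebraic decomposition isolating $\varepsilon^{-1}(h_{\omega,\varepsilon}-h_\omega)-\hat h_\omega$ (handled by Theorem~\ref{sharpl}) plus an $O(|\varepsilon|)$ remainder (handled by Proposition~\ref{statstab} and \eqref{series2}). The only difference from the paper's argument is the cosmetic choice of splitting $f_{\omega,\varepsilon}h_{\omega,\varepsilon}-f_{\omega,0}h_\omega$ as $f_{\omega,\varepsilon}(h_{\omega,\varepsilon}-h_\omega)+(f_{\omega,\varepsilon}-f_{\omega,0})h_\omega$ rather than $(f_{\omega,\varepsilon}-f_{\omega,0})h_{\omega,\varepsilon}+f_{\omega,0}(h_{\omega,\varepsilon}-h_\omega)$, which leads to a formally different (but equally valid) regrouping of the same estimates.
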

\begin{proof}
	Note that 
	\[
	\begin{split}
		&|D_n^2(\varepsilon)-\bar D_n^2(\varepsilon)|\\
		&\le \int_\Omega \|\varepsilon^{-1}(f_{\omega, \varepsilon}-f_{\omega, 0})h_{\omega, \varepsilon}+L(\omega, F_\omega)h_\omega+f_{\omega, 0}(\varepsilon^{-1}(h_{\omega, \varepsilon}-h_\omega)-\hat h_\omega)\|_{L^1(m)}\cdot \|f_{\sigma^n \omega, 0}\|_{C^0}\, d\mathbb P(\omega)\\
		&\le C_F\int_\Omega \|\varepsilon^{-1}(f_{\omega, \varepsilon}-f_{\omega, 0})h_{\omega, \varepsilon}+L(\omega, F_\omega)h_\omega+f_{\omega, 0}(\varepsilon^{-1}(h_{\omega, \varepsilon}-h_\omega)-\hat h_\omega)\|_{L^1(m)}\, d\mathbb P(\omega).
	\end{split}
	\]
	Moreover, 
	\[
	\begin{split}
		\|\varepsilon^{-1}(f_{\omega, \varepsilon}-f_{\omega, 0})h_{\omega, \varepsilon}+L(\omega, F_\omega)h_\omega\|_{L^1(m)}  &\le \|\varepsilon^{-1}(f_{\omega, \varepsilon}-f_{\omega, 0})h_{\omega, \varepsilon}+L(\omega, F_\omega)h_{\omega, \varepsilon}\|_{L^1(m)} \\
		&\phantom{\le}+|L(\omega, F_\omega)|\|h_{\omega, \varepsilon}-h_\omega\|_{L^1(m)}\\
		&= \left |\varepsilon^{-1}(f_{\omega, \varepsilon}-f_{\omega, 0})+L(\omega, F_\omega)\right |\\
		&\phantom{=}+|L(\omega, F_\omega)|\|h_{\omega, \varepsilon}-h_\omega\|_{L^1(m)}\\
		&\le C_{\alpha, \underline{\alpha}, F}|\varepsilon|^{1- 2\alpha}+C_{\alpha, F} |\varepsilon| \\
		&\le C_{\alpha, \underline{\alpha}, F}|\varepsilon|^{1-2\alpha},
	\end{split}
	\]
	for $\mathbb P$-a.e. $\omega \in \Omega$ and $\varepsilon$ sufficiently close to $0$, where we used Proposition~\ref{statstab} and Theorem~\ref{sharpl}.
	Similarly,\[
	\begin{split}
		\|f_{\omega, 0}(\varepsilon^{-1}(h_{\omega, \varepsilon}-h_\omega)-\hat h_\omega)\|_{L^1(m)}&\le \|f_{\omega, 0}\|_{C^0} \cdot \|\varepsilon^{-1}(h_{\omega, \varepsilon}-h_\omega)-\hat h_\omega\|_{L^1(m)}\\
		&\le C_{\alpha, \underline{\alpha}, F}|\varepsilon|^{1-2\alpha},
	\end{split}
	\]
	for $\mathbb P$-a.e. $\omega \in \Omega$ and $\varepsilon$ sufficiently close to $0$. The conclusion of the lemma follows readily from the above estimates.
\end{proof}
By Lemma~\ref{837n}, we have \[
\sum_{n<|\varepsilon|^{-\gamma}}| D_n^2(\varepsilon)-\bar D_n^2(\varepsilon)| \le C_{\alpha, \underline{\alpha}, F}|\varepsilon|^{1-2\alpha-\gamma}. 
\]
Since $
1-2\alpha>\gamma$,
we have 
\begin{equation}\label{w11}
	\lim_{\varepsilon \to 0}\sum_{n<|\varepsilon|^{-\gamma}}| D_n^2(\varepsilon)-\bar D_n^2(\varepsilon)|=0.
\end{equation}

Taking into account~\eqref{w00}, \eqref{w22} and~\eqref{w11}, in order to conclude the differentiability of $\varepsilon \mapsto \Sigma^2_\ve$, it remains to establish the following:
\begin{itemize}
	\item The limits $\lim_{\ve \to 0} D_n^1(\ve)$ and $\lim_{\ve \to 0}\bar D_n^2(\varepsilon)$ exist for each $n \ge 1$.
	\item There exist $\bar{\ve}_0 > 0$ and $u_n$ such that 
	for all $n < |\ve|^{-\gamma}$ and all $\ve \in [-\bar{\ve}_0, \bar{\ve}_0]\setminus \{0\}$,
	$$
	|D_n^1(\ve)| \le u_n, \quad | \bar D_n^2(\ve)|\le u_n \quad \text{and} \quad 
	\sum_{n=1}^\infty u_n < \infty.
	$$
\end{itemize}
The first point above is established in Lemma~\ref{limitexists1} for $\bar D_n^2$ and in Lemma~\ref{limitexists2} for $D_n^1$. The second point is proved in Lemma~\ref{summability1} for $\bar D_n^2$ and in Lemma~\ref{summability2} for $D_n^1$.

\subsection{A series of auxiliary lemmas}
We write
\begin{equation}\label{8:41}
	\begin{split}
		\bar D_n^2(\varepsilon) &=\int_\Omega \int_0^1\mathcal L_{\omega, \varepsilon}^n(-L(\omega, F_\omega)h_\omega+f_{\omega,0}\hat h_\omega)f_{\sigma^n \omega, 0}\, dm\, d\mathbb P(\omega)\\
		&=\int_\Omega \int_0^1 \mathcal L_{\omega, \varepsilon}^n (f_{\omega, 0}\hat h_\omega)f_{\sigma^n \omega, 0}\, dm\, d\mathbb P(\omega)-\int_\Omega L(\omega, F_\omega) \int_0^1 \mathcal L_{\omega, \varepsilon}^n (h_\omega)f_{\sigma^n \omega, 0}\, dm\, d\mathbb P(\omega)\\
		&=: \bar D_{n, 1}^2(\varepsilon)-\bar D_{n, 2}^2(\varepsilon).
	\end{split}
\end{equation}
\begin{lemma}\label{lem:aux1}
	We have
	\[
	|\bar D_{n, 2}^2(\varepsilon)|\le C_{\alpha, F}(n^{-1/\gamma}+n^{-1/\alpha+1}), \quad n<|\varepsilon|^{-\gamma}.
	\]
\end{lemma}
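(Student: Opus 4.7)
The plan is to express the inner integral as the sum of a memory-loss contribution and a statistical-stability contribution, each of which can be bounded separately.

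First, using $\int f_{\sigma^n \omega, 0} h_{\sigma^n \omega}\, dm = 0$ together with $\int \mathcal L_{\omega, \varepsilon}^n h_\omega\, dm = 1$, we would rewrite the inner integral as
\[
\int_0^1 \mathcal L_{\omega, \varepsilon}^n(h_\omega) f_{\sigma^n \omega, 0}\, dm = \int_0^1 F_{\sigma^n \omega}\bigl(\mathcal L_{\omega, \varepsilon}^n h_\omega - h_{\sigma^n \omega}\bigr)\, dm.
\]
Using $\mathcal L_{\omega, \varepsilon}^n h_{\omega, \varepsilon} = h_{\sigma^n \omega, \varepsilon}$, we then split
\[
\mathcal L_{\omega, \varepsilon}^n h_\omega - h_{\sigma^n \omega} = \mathcal L_{\omega, \varepsilon}^n(h_\omega - h_{\omega, \varepsilon}) + (h_{\sigma^n \omega, \varepsilon} - h_{\sigma^n \omega}).
\]
Proposition~\ref{statstab} gives $\Vert h_{\sigma^n \omega, \varepsilon} - h_{\sigma^n \omega}\Vert_{L^1(m)} \le C_\alpha |\varepsilon|$, and the constraint $n < |\varepsilon|^{-\gamma}$ yields $|\varepsilon| \le n^{-1/\gamma}$, producing the $n^{-1/\gamma}$ summand in the claimed bound.

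For the remaining summand, both $h_\omega$ and $h_{\omega, \varepsilon}$ are probability densities lying in $\mathcal C_* \cap \mathcal C_2$ with cone parameters depending only on $\alpha$. Memory loss for the perturbed sequential cocycle $(\mathcal L_{\sigma^k \omega, \varepsilon})_{k \ge 0}$, obtained by combining~\cite[Proposition 3.14]{KL} (which certifies that such densities are regular in the sense of~\cite[Definition 3.5]{KL}) with~\cite[Theorem 3.8]{KL} --- or, equivalently, the cone decomposition~\eqref{eq:decomp_c1} together with~\cite[Theorem 1.1]{KL} underlying~\eqref{dec} --- delivers
\[
\bigl\Vert \mathcal L_{\omega, \varepsilon}^n(h_\omega - h_{\omega, \varepsilon})\bigr\Vert_{L^1(m)} = \bigl\Vert \mathcal L_{\omega, \varepsilon}^n h_\omega - h_{\sigma^n \omega, \varepsilon}\bigr\Vert_{L^1(m)} \le C_\alpha n^{-1/\alpha + 1}
\]
uniformly in $\varepsilon$, since all LSV parameters $\beta(\omega) + \varepsilon \delta(\omega)$ stay in $[\underline{\alpha}, \alpha]$.

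Combining the two bounds, using $\Vert F_{\sigma^n \omega}\Vert_{C^0} \le C_F$ from~\eqref{observ2}, and noting that $|L(\omega, F_\omega)| \le \Vert F_\omega \Vert_{C^0}\Vert \hat h_\omega \Vert_{L^1(m)}$ is uniformly bounded in $\omega$ by~\eqref{observ2} and~\eqref{series2}, an integration over $\Omega$ against $\mathbb P$ will give the claim. The main point requiring care is the uniform (in $\varepsilon$) memory-loss rate $n^{-1/\alpha+1}$ for the perturbed operators $\mathcal L_{\omega, \varepsilon}$, not merely for $\mathcal L_\omega$ as stated in~\eqref{dec}; this is available without additional work because the cone invariance and tail bounds of~\cite{KL} depend only on the uniform upper bound $\alpha$ on the LSV parameters.
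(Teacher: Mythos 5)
Your proposal is correct and is essentially the same proof the paper gives: you split $\mathcal L_{\omega,\varepsilon}^n h_\omega - h_{\sigma^n\omega}$ into $\mathcal L_{\omega,\varepsilon}^n(h_\omega - h_{\omega,\varepsilon})$ plus $h_{\sigma^n\omega,\varepsilon}-h_{\sigma^n\omega}$, while the paper splits $f_{\sigma^n\omega,0}$ into $f_{\sigma^n\omega,\varepsilon}$ plus the constant $f_{\sigma^n\omega,0}-f_{\sigma^n\omega,\varepsilon}$, but since $\int\mathcal L_{\omega,\varepsilon}^n(h_\omega-h_{\omega,\varepsilon})\,dm=0$ and $\int\mathcal L_{\omega,\varepsilon}^n h_\omega\,dm=1$ the two decompositions produce term-by-term identical quantities. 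Both proofs reduce to the same two ingredients, Proposition~\ref{statstab} and memory loss for the $\varepsilon$-perturbed cocycle, and your remark that the latter holds uniformly in $\varepsilon$ because the parameters stay in $[\underline\alpha,\alpha]$ is a useful explicitation of a fact the paper uses silently.
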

\begin{proof}
	Note that 
	\[
	\|\mathcal L_{\omega, \varepsilon}^n h_\omega -h_{\sigma^n \omega, \varepsilon}\|_{L^1(m)}=\|\mathcal L_{\omega, \varepsilon}^n (h_\omega-h_{\omega, \varepsilon})\|_{L^1(m)}\le C_\alpha n^{-1/\alpha+1}.
	\]
	Then, using that 
	\[
	\int_0^1 \mathcal L_{\omega, \varepsilon}^n (h_{\omega, \varepsilon})f_{\sigma^n \omega, \varepsilon}\, dm=\int_0^1 h_{\sigma^n \omega, \varepsilon}f_{\sigma^n \omega, \varepsilon}\, dm=0,
	\]
	we have 
	\[
	\begin{split}
		&\left |\int_0^1 \mathcal L_{\omega, \varepsilon}^n(h_\omega)f_{\sigma^n \omega, 0}\, dm\right | \\
		&\le \left |\int_0^1 \mathcal L_{\omega, \varepsilon}^n(h_\omega)(f_{\sigma^n \omega, \varepsilon}-f_{\sigma^n \omega, 0})\, dm\right |+\left |\int_0^1 \mathcal L_{\omega, \varepsilon}^n (h_\omega)f_{\sigma^n \omega, \varepsilon}\, dm\right | \\
		&=\left |\int_0^1 \mathcal L_{\omega, \varepsilon}^n(h_\omega)(f_{\sigma^n \omega, \varepsilon}-f_{\sigma^n \omega, 0})\, dm\right |+\left |\int_0^1 \mathcal L_{\omega, \varepsilon}^n (h_\omega-h_{\omega, \varepsilon})f_{\sigma^n \omega, \varepsilon}\, dm\right | \\
		&\le \|\mathcal L_{\omega, \varepsilon}^n h_\omega \|_{L^1(m)}\cdot \left |f_{\sigma^n \omega, \varepsilon}-f_{\sigma^n \omega, 0}\right |+\|\mathcal L_{\omega, \varepsilon}^n(h_\omega-h_{\omega, \varepsilon})\|_{L^1(m)}\cdot \|f_{\sigma^n \omega, \varepsilon}\|_{C^0} \\
		&\le  \left |f_{\sigma^n \omega, \varepsilon}-f_{\sigma^n \omega, 0}\right |+\|\mathcal L_{\omega, \varepsilon}^n(h_\omega-h_{\omega, \varepsilon})\|_{L^1(m)}\cdot \|f_{\sigma^n \omega, \varepsilon}\|_{C^0} \\
		&\le C_{\alpha, F}|\varepsilon|+C_{\alpha, F}n^{-1/\alpha+1},
	\end{split}
	\]
	which yields the desired conclusion.
\end{proof}
%for $n<|\varepsilon|^{-\gamma}$. Since $\gamma <1$ (see~\eqref{gamma2}), we have the summability of terms $|\bar{\bar D}_{n, 2}^3(\varepsilon)|$.
\begin{lemma}\label{lem:aux2}
	We have
	\[
	|\bar D_{n, 1}^2(\varepsilon)| \le C_{\alpha, \underline{\alpha}, F}( 
	n^{1-\frac{1}{2\alpha}}
	+n^{-1/\gamma}), \quad n<|\varepsilon|^{-\gamma}.
	\]
\end{lemma}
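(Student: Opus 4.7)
The plan is to truncate the series defining $\hat h_\omega$ at level $N = \lfloor n^{1/2}\rfloor$:
\[
\hat h_\omega = \hat h_\omega^{(N)} - R_N^{(\omega)}, \qquad \hat h_\omega^{(N)} := -\sum_{i=0}^{N-1}\delta(\sigma^{-(i+1)}\omega)\,\mathcal L_{\sigma^{-i}\omega}^i \psi_i^{(\omega)},
\]
where $\psi_i^{(\omega)} := (X_{\beta(\sigma^{-(i+1)}\omega)}N_{\beta(\sigma^{-(i+1)}\omega)}h_{\sigma^{-(i+1)}\omega})'$. The two summands in the target bound will come from the two parts of this splitting: $n^{1-1/(2\alpha)}$ from the tail $R_N^{(\omega)}$, and $n^{-1/\gamma}$ from $O(|\varepsilon|)$-type errors produced by the $N$ head terms (using $|\varepsilon| \le n^{-1/\gamma}$).

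For the tail, each $\psi_i^{(\omega)}$ is zero-mean and, using the bounds for $X_\gamma$ in~\cite[(2.3)--(2.4)]{BT} combined with the cone structure of $h_{\sigma^{-(i+1)}\omega}$, satisfies uniform pointwise estimates of logarithmic type (in particular $|\psi_i^{(\omega)}(x)| \le C$ and $|(\psi_i^{(\omega)})'(x)| \le C x^{-1}$). I would then decompose each $\psi_i^{(\omega)}$ as a difference of two elements of $\mathcal C_* \cap C^1(0,1]$ with equal integrals and uniform $L^1$ norm, as in the proof of Theorem~\ref{sharpl} (Appendix~B), and apply the polynomial-bound Proposition following Proposition~\ref{dec} with $\beta = \alpha$. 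Its constants depend only on $a$, $\alpha$, $\beta$ and hence apply uniformly to the perturbed cocycle $\mathcal L_{\omega, \varepsilon}^n$, yielding $\|\mathcal L_{\sigma^{-i}\omega}^i \psi_i^{(\omega)}\|_{L^1(m)} \le C i^{1 - 1/\alpha}$ uniformly in $i$ and $\omega$. Summing over $i \ge N$ (admissible because $1/\alpha > 2$) gives $\esssup_\omega \|R_N^{(\omega)}\|_{L^1(m)} \le C N^{2 - 1/\alpha} = C n^{1 - 1/(2\alpha)}$, and since $\mathcal L_{\omega, \varepsilon}^n$ is $L^1$-contractive and $f_{\sigma^n\omega, 0}$ is uniformly bounded, the contribution of $R_N^{(\omega)}$ to $\bar D_{n,1}^2(\varepsilon)$ is at most a constant multiple of $n^{1 - 1/(2\alpha)}$.

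For the head, for each $i < N$ I would set $c_i^{(\omega)} := \int_0^1 f_{\omega,0} \mathcal L_{\sigma^{-i}\omega}^i \psi_i^{(\omega)}\,dm$ and split
\[
f_{\omega,0}\mathcal L_{\sigma^{-i}\omega}^i \psi_i^{(\omega)} = \bigl(f_{\omega,0}\mathcal L_{\sigma^{-i}\omega}^i \psi_i^{(\omega)} - c_i^{(\omega)} h_{\omega,\varepsilon}\bigr) + c_i^{(\omega)} h_{\omega,\varepsilon}.
\]
The bracketed term is zero-mean and, since $f_{\omega,0} \in C^1$ and both $\mathcal L_{\sigma^{-i}\omega}^i\psi_i^{(\omega)}$ and $h_{\omega,\varepsilon}$ are differences of cone elements satisfying $|\phi(x)| \le C x^{-\alpha}$ and $|\phi'(x)| \le C x^{-\alpha-1}$ (the latter by~\eqref{eq:cone_fun_diff}), it inherits the same uniform polynomial bounds; the polynomial-bound Proposition with $\beta = \alpha$ therefore yields $L^1$-decay of rate $n^{1 - 1/\alpha}$ under $\mathcal L_{\omega,\varepsilon}^n$. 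The second piece satisfies $\mathcal L_{\omega,\varepsilon}^n(c_i^{(\omega)} h_{\omega,\varepsilon}) = c_i^{(\omega)} h_{\sigma^n\omega, \varepsilon}$, which upon integration against $f_{\sigma^n\omega, 0}$ and the use of $\int h_{\sigma^n\omega} f_{\sigma^n\omega, 0}\,dm = 0$ together with Proposition~\ref{statstab} contributes at most $C|c_i^{(\omega)}| |\varepsilon|$. Since $|c_i^{(\omega)}| \le \|F\|_\infty \|\mathcal L_{\sigma^{-i}\omega}^i \psi_i^{(\omega)}\|_{L^1} \le C i^{1 - 1/\alpha}$ is summable in $i$, the head contributes at most $C N n^{1 - 1/\alpha} + C|\varepsilon| \le C n^{3/2 - 1/\alpha} + C n^{-1/\gamma}$; as $\alpha < 1$ gives $n^{3/2 - 1/\alpha} \le n^{1 - 1/(2\alpha)}$, this completes the claim.

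The main technical obstacle I anticipate is producing the uniform pointwise estimates for $\psi_i^{(\omega)}$ and the cone decomposition feeding into the polynomial-bound Proposition; these essentially rest on computations already developed for the proof of Theorem~\ref{sharpl} in Appendix~B, and once they are in place, the rest of the argument is just the bookkeeping above with the balancing choice $N = \lfloor n^{1/2}\rfloor$.
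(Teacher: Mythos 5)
Your proof is correct but proceeds by a genuinely different route than the paper. The paper does not touch the series representation of $\hat h_\omega$ at all in this lemma: instead, it invokes Theorem~\ref{sharpl} a second time, in reverse, replacing $\hat h_\omega$ by the finite-difference quotient $r^{-1}(h_{\omega,r}-h_\omega)$ up to an error of order $r^{1-2\alpha}$, and then estimating $\mathcal L_{\omega,\varepsilon}^n\bigl(f_{\omega,0}\,r^{-1}(h_{\omega,r}-h_\omega)\bigr)$ term by term by exploiting that $h_{\omega,r}$, $h_\omega$ are bona fide equivariant densities (so the standard centering tricks, \eqref{dec}, and Proposition~\ref{statstab} apply directly). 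Finally it balances by taking $r = n^{-1/(2\alpha)}$, exactly the choice that equates $r^{1-2\alpha}$ with $r^{-1}n^{1-1/\alpha}$. You instead truncate the series \eqref{series} at $N=\lfloor n^{1/2}\rfloor$, control the tail via the $O(i^{1-1/\alpha})$ decay of $\|\mathcal L_{\sigma^{-i}\omega}^i\psi_i\|_{L^1}$, and for the head apply the zero-mean polynomial decay proposition (with $\beta=\alpha$) to the centered functions $f_{\omega,0}\mathcal L^i\psi_i - c_i h_{\omega,\varepsilon}$, using cone invariance and \eqref{eq:cone_fun_diff} to get the required uniform pointwise bounds. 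The arithmetic $n^{3/2-1/\alpha}\le n^{1-1/(2\alpha)}$ and the summability of $i^{1-1/\alpha}$ (both from $\alpha<1/2$) close the argument. Your route buys transparency (it never passes through the density perturbation $h_{\omega,r}$ and makes the origin of the two error powers manifest), at the cost of needing the cone decomposition of each $\psi_i$; the paper's route reuses its linear-response infrastructure and avoids any discussion of the summands of $\hat h_\omega$, effectively treating $\hat h_\omega$ as a black box supplied by Theorem~\ref{sharpl}. Note also that the cone-decomposition steps you anticipate as a ``technical obstacle'' are not actually in Appendix~B; the relevant facts are exactly those the paper already assembles in the proof of Lemma~\ref{limitexists1} (namely \cite[Proposition~9 and Lemma~12]{DGTS} and \cite[Lemma~3.3]{LS}), so they are available on equal footing.
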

\begin{proof}
	By using Theorem~\ref{sharpl} we have
	\begin{equation}\label{gh1}
		\begin{split}
			\left |\int_0^1 \mathcal L_{\omega, \varepsilon}^n (f_{\omega, 0}\hat h_\omega)f_{\sigma^n \omega, 0}\, dm\right | &\le \left |\int_0^1 \mathcal L_{\omega, \varepsilon}^n (f_{\omega, 0}(\hat h_\omega-r^{-1}(h_{\omega, r}-h_\omega)))
			f_{\sigma^n \omega, 0}\, dm\right | \\
			&\phantom{\le}+\left |\int_0^1 \mathcal L_{\omega, \varepsilon}^n (f_{\omega, 0}r^{-1}(h_{\omega, r}-h_\omega))f_{\sigma^n \omega, 0}\, dm\right | \\
			&\le \|\mathcal L_{\omega, \varepsilon}^n (f_{\omega, 0}(\hat h_\omega-r^{-1}(h_{\omega, r}-h_\omega)))\|_{L^1(m)} \cdot \|f_{\sigma^n \omega, 0}\|_{C^0}\\
			&\phantom{\le}+\left |\int_0^1 \mathcal L_{\omega, \varepsilon}^n (f_{\omega, 0}r^{-1}(h_{\omega, r}-h_\omega))f_{\sigma^n \omega, 0}\, dm\right |\\
			&\le C_{\alpha, \underline{\alpha}, F}r^{1-2\alpha}+ \left |\int_0^1 \mathcal L_{\omega, \varepsilon}^n (f_{\omega, 0}r^{-1}(h_{\omega, r}-h_\omega))f_{\sigma^n \omega, 0}\, dm\right |,
		\end{split}
	\end{equation}
	for $r\in (0, 1)$ that will be fixed later on.
	Next,
	\[
	\begin{split}
		& \int_0^1 \mathcal L_{\omega, \varepsilon}^n (f_{\omega, 0}r^{-1}(h_{\omega, r}-h_\omega))f_{\sigma^n \omega, 0}\, dm \\
		&=\int_0^1 \mathcal L_{\omega, \varepsilon}^n \left (r^{-1}\left (f_{\omega, 0}-\int_0^1f_{\omega, 0}h_{\omega, r}\, dm\right )h_{\omega, r}\right )f_{\sigma^n \omega, 0}\, dm \\
		&\phantom{=}+\int_0^1 \mathcal L_{\omega, \varepsilon}^n \left (r^{-1}\left (\int_0^1 f_{\omega, 0}h_{\omega, r}\, dm\right )h_{\omega, r}\right )f_{\sigma^n \omega, 0}\, dm \\
		&\phantom{=}-\int_0^1 \mathcal L_{\omega, \varepsilon}^n \left(r^{-1} \left (f_{\omega, 0}-\int_0^1 f_{\omega, 0}h_\omega\, dm\right )h_\omega \right )f_{\sigma^n \omega, 0}\, dm \\
		&\phantom{=}-\int_0^1 \mathcal L_{\omega, \varepsilon}^n \left (r^{-1} \left (\int_0^1 f_{\omega, 0}h_\omega\, dm\right )h_\omega \right )f_{\sigma^n \omega, 0}\, dm.
	\end{split}
	\]
	Since 
	\[
	\int_0^1 \left (f_{\omega, 0}-\int_0^1f_{\omega, 0}h_{\omega, r}\, dm\right )h_{\omega, r}\, dm=0,
	\]
	we have using~\eqref{dec} that 
	\begin{equation}\label{gh2}
		\begin{split}
			&\left | \int_0^1 \mathcal L_{\omega, \varepsilon}^n \left (r^{-1}\left (f_{\omega, 0}-\int_0^1f_{\omega, 0}h_{\omega, r}\, dm\right )h_{\omega, r}\right )f_{\sigma^n \omega, 0}\, dm\right | \\
			&\le C_F r^{-1} \left \|\mathcal L_{\omega, \varepsilon}^n \left (\left (f_{\omega, 0}-\int_0^1f_{\omega, 0}h_{\omega, r}\, dm\right )h_{\omega, r} \right )\right \|_{L^1(m)} \\
			&\le C_{\alpha, F} r^{-1}n^{-1/\alpha+1} \left \|f_{\omega, 0}-\int_0^1f_{\omega, 0}h_{\omega, r}\, dm \right \|_{C^1} \\
			&\le C_{\alpha, F} r^{-1}n^{-1/\alpha+1}.
		\end{split}
	\end{equation}
	In the same manner, 
	\begin{equation}\label{gh3}
		\left |\int_0^1 \mathcal L_{\omega, \varepsilon}^n \left(r^{-1} \left (f_{\omega, 0}-\int_0^1 f_{\omega, 0}h_\omega\, dm\right )h_\omega \right )f_{\sigma^n \omega, 0}\, dm\right |\le C_{\alpha, F} r^{-1}n^{-1/\alpha+1}.
	\end{equation}
	Moreover, 
	\[
	\begin{split}
		&\int_0^1 \mathcal L_{\omega, \varepsilon}^n \left (r^{-1}\left (\int_0^1 f_{\omega, 0}h_{\omega, r}\, dm\right )h_{\omega, r}\right )f_{\sigma^n \omega, 0}\, dm\\
		&-\int_0^1 \mathcal L_{\omega, \varepsilon}^n \left (r^{-1} \left (\int_0^1 f_{\omega, 0}h_\omega\, dm\right )h_\omega \right )f_{\sigma^n \omega, 0}\, dm \\
		&=\int_0^1 \mathcal L_{\omega, \varepsilon}^n \left (r^{-1} \left(\int_0^1f_{\omega, 0}h_{\omega, r}\, dm\right ) (h_{\omega, r}-h_{\omega}) \right )f_{\sigma^n \omega, 0}\, dm \\
		&\phantom{=}+\int_0^1 \mathcal L_{\omega, \varepsilon}^n \left (r^{-1}\left (\int_0^1 f_{\omega, 0}h_{\omega, r}\, dm-\int_0^1f_{\omega, 0}h_\omega \, dm\right )h_\omega \right )f_{\sigma^n \omega, 0}\, dm.
	\end{split}
	\]
	Observe that 
	\begin{equation}\label{gh4}
		\begin{split}
			& \left |\int_0^1 \mathcal L_{\omega, \varepsilon}^n \left (r^{-1} \left (\int_0^1f_{\omega, 0}h_{\omega, r}\, dm\right ) (h_{\omega, r}-h_{\omega}) \right )f_{\sigma^n \omega, 0}\, dm \right |\\
			&\le C_F r^{-1}\left |\int_0^1 f_{\omega, 0}h_{\omega, r}\, dm\right | \cdot \|\mathcal L_{\omega, \varepsilon}^n (h_{\omega, r}-h_\omega)\|_{L^1(m)} \\
			&\le C_{\alpha, F}  r^{-1}n^{-1/\alpha+1}.
		\end{split}
	\end{equation}
	In addition, 
	\[
	\begin{split}
		&\int_0^1 \mathcal L_{\omega, \varepsilon}^n \left (r^{-1}\left (\int_0^1 f_{\omega, 0}h_{\omega, r}\, dm-\int_0^1f_{\omega, 0}h_\omega \, dm\right )h_\omega \right )f_{\sigma^n \omega, 0}\, dm \\
		&=\int_0^1 \mathcal L_{\omega, \varepsilon}^n \left ( \left (r^{-1}\left (\int_0^1 f_{\omega, 0}h_{\omega, r}\, dm-\int_0^1f_{\omega, 0}h_\omega \, dm\right )-\int_0^1f_{\omega, 0}\hat h_\omega \, dm\right )h_\omega \right )f_{\sigma^n \omega, 0}\, dm \\
		&\phantom{=}+\int_0^1 f_{\omega, 0}\hat h_\omega \, dm \int_0^1 \mathcal L_{\omega, \varepsilon}^n (h_\omega) f_{\sigma^n \omega, 0}\, dm.
	\end{split}
	\]
	By Theorem~\ref{sharpl},\begin{equation}\label{gh5}
		\begin{split}
			& \left |\int_0^1 \mathcal L_{\omega, \varepsilon}^n \left ( \left (r^{-1}\left (\int_0^1 f_{\omega, 0}h_{\omega, r}\, dm-\int_0^1f_{\omega, 0}h_\omega \, dm\right )-\int_0^1f_{\omega, 0}\hat h_\omega \, dm\right )h_\omega \right )f_{\sigma^n \omega, 0}\, dm\right | \\
			&\le C_F \left |r^{-1}\left (\int_0^1 f_{\omega, 0}h_{\omega, r}\, dm-\int_0^1f_{\omega, 0}h_\omega \, dm\right )-\int_0^1f_{\omega, 0}\hat h_\omega \, dm\right |  \\
			&\le C_{\alpha, \underline{\alpha}, F} r^{1-2\alpha}.
		\end{split}
	\end{equation}
	Furthermore, 
	\begin{equation}\label{gh6}
		\begin{split}
			& \left |\int_0^1 f_{\omega, 0}\hat h_\omega \, dm \int_0^1 \mathcal L_{\omega, \varepsilon}^n (h_\omega) f_{\sigma^n \omega, 0}\, dm-\int_0^1 f_{\omega, 0}\hat h_\omega \, dm \int_0^1 h_{\sigma^n \omega, \varepsilon} f_{\sigma^n \omega, 0}\, dm \right | \\
			&\le C_F \|\mathcal L_{\omega, \varepsilon}^n (h_\omega-h_{\omega, \varepsilon})\|_{L^1(m)} \\
			&\le C_{\alpha, F} n^{-1/\alpha+1}, 
		\end{split}
	\end{equation} and thus (taking into account \eqref{gh1}-\eqref{gh6})
	\[
	\left |\bar D_{n, 1}^2(\varepsilon)-\int_\Omega \int_0^1 f_{\omega, 0}\hat h_\omega \, dm \int_0^1 h_{\sigma^n \omega, \varepsilon} f_{\sigma^n \omega, 0}\, dm \, d\mathbb P(\omega)\right | \le C_{\alpha, \underline{\alpha}, F} (r^{1-2\alpha}+r^{-1}n^{-1/\alpha+1}),
	\]
	for $r$ close to $0$. In addition, since $\int_0^1 f_{\omega, \varepsilon}h_{\omega, \varepsilon}\, dm=0$,
	\[
	\begin{split}
		& \int_\Omega \int_0^1 f_{\omega, 0}\hat h_\omega \, dm \int_0^1 h_{\sigma^n \omega, \varepsilon} f_{\sigma^n \omega, 0}\, dm \, d\mathbb P(\omega)\\
		&=\int_\Omega \int_0^1 f_{\omega, 0}\hat h_\omega \, dm \int_0^1 h_{\sigma^n \omega, \varepsilon} (f_{\sigma^n \omega, 0}-f_{\sigma^n \omega, \varepsilon})\, dm \, d\mathbb P(\omega),
	\end{split}
	\]
	which gives that
	\[
	\left |\int_\Omega \int_0^1 f_{\omega, 0}\hat h_\omega \, dm \int_0^1 h_{\sigma^n \omega, \varepsilon} f_{\sigma^n \omega, 0}\, dm \, d\mathbb P(\omega)\right | \le C_{\alpha, 
		F}|\varepsilon| \le C_{\alpha, F}n^{-1/\gamma},
	\]
	for $n<|\varepsilon|^{-\gamma}$. By choosing $r=n^{-\frac{1-\alpha}{\alpha (2-2\alpha)}}$ (so that $r^{1-2\alpha}=r^{-1}n^{-1/\alpha+1}$), we obtain the desired conclusion. %and obtain that 
	%\[
	%|\bar D_{n, 1}^2(\varepsilon)| \le C_{\alpha, \underline{\alpha}, F}( n^{-\frac{(1-\alpha)(2-2\alpha)}{\alpha (2-2\alpha)}}+n^{-1/\gamma}),
	%\]
	%which implies summability provided that $\frac{(1-\alpha)(2-2\alpha)}{\alpha (2-2\alpha)}>1$ which means that $\alpha <1/4$.
\end{proof}
As a direct consequence of the previous two lemmas and~\eqref{8:41}, we obtain the following.
\begin{lemma}\label{summability1}
	We have
	\[
	|\bar{D}_n^2(\varepsilon)| \le C_{\alpha, \underline{\alpha}, F} (n^{1-\frac{1}{2\alpha}}+n^{-1/\gamma}+n^{-1/\alpha+1}), \quad n<|\varepsilon|^{-\gamma}.
	\]
\end{lemma}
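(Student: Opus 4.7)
The plan is very short: Lemma~\ref{summability1} follows by combining the bounds obtained in the two preceding lemmas via the decomposition recorded in~\eqref{8:41}.

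More precisely, I would first recall from~\eqref{8:41} that
\[
\bar D_n^2(\varepsilon)=\bar D_{n,1}^2(\varepsilon)-\bar D_{n,2}^2(\varepsilon),
\]
so by the triangle inequality
\[
|\bar D_n^2(\varepsilon)|\le |\bar D_{n,1}^2(\varepsilon)|+|\bar D_{n,2}^2(\varepsilon)|.
\]
Then I would invoke Lemma~\ref{lem:aux2} to bound the first term by $C_{\alpha,\underline{\alpha},F}(n^{1-\frac{1}{2\alpha}}+n^{-1/\gamma})$ for $n<|\varepsilon|^{-\gamma}$, and Lemma~\ref{lem:aux1} to bound the second term by $C_{\alpha,F}(n^{-1/\gamma}+n^{-1/\alpha+1})$ in the same range of $n$. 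Adding the two estimates and absorbing constants into a single $C_{\alpha,\underline{\alpha},F}$ yields exactly
\[
|\bar D_n^2(\varepsilon)|\le C_{\alpha,\underline{\alpha},F}\bigl(n^{1-\frac{1}{2\alpha}}+n^{-1/\gamma}+n^{-1/\alpha+1}\bigr),\qquad n<|\varepsilon|^{-\gamma},
\]
which is the desired inequality.

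There is no real obstacle here: all the work has already been done in Lemmas~\ref{lem:aux1} and~\ref{lem:aux2}. The only point worth emphasizing is that both auxiliary bounds are valid uniformly in $\varepsilon$ for $n<|\varepsilon|^{-\gamma}$ (the only range that matters in view of~\eqref{w00}), and that the constraint $\gamma\in(0,1/2)$ with $\gamma(1/\alpha-2)>1$, which is available thanks to $\alpha<1/5$ (in fact $\alpha<1/4$ suffices for this step), will be used later to ensure that the sum $\sum_{n=1}^\infty u_n$ with $u_n$ given by the right-hand side above is finite -- but this summability is a concern for the subsequent deduction of differentiability, not for the lemma itself.
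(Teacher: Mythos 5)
Your proof is correct and matches the paper's own argument exactly: the paper states Lemma~\ref{summability1} as ``a direct consequence of the previous two lemmas and~\eqref{8:41},'' which is precisely the triangle-inequality combination of Lemmas~\ref{lem:aux1} and~\ref{lem:aux2} that you carry out.
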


\begin{lemma}\label{limitexists1}
	For each $n\in \mathbb N$,
	\[
	\lim_{\varepsilon \to 0}\bar D_n^2(\varepsilon)=\bar D_n^2(0).
	\]
\end{lemma}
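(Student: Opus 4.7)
The plan is to reduce Lemma~\ref{limitexists1} to an application of the dominated convergence theorem in $\omega$, after first establishing pointwise (in $\omega$) continuity of the fiber integral at $\varepsilon=0$ via Koopman duality. The key observation is that the argument $\phi_\omega:=-L(\omega, F_\omega)h_\omega+f_{\omega,0}\hat h_\omega\in L^1(m)$ of $\mathcal L_{\omega,\varepsilon}^n$ does not depend on $\varepsilon$, so by duality
\[
\int_0^1 \mathcal L_{\omega,\varepsilon}^n(\phi_\omega) f_{\sigma^n\omega,0}\, dm=\int_0^1 \phi_\omega\cdot (f_{\sigma^n\omega,0}\circ T_{\omega,\varepsilon}^n)\, dm.
\]
The central structural input is therefore the uniform convergence $T_{\omega,\varepsilon}^n \to T_\omega^n$ on $[0,1]$ as $\varepsilon \to 0$, which I regard as routine: the single-step bound $\|T_{\omega,\varepsilon}-T_\omega\|_{C^0}\to 0$ is immediate from the explicit form of the LSV maps, and the general case follows by induction on $n$ using that each $T_{\omega,\varepsilon}$ is uniformly Lipschitz for $\varepsilon$ close to $0$.

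Given this, for $\mathbb P$-a.e.\ $\omega$ the uniform continuity of $f_{\sigma^n\omega,0}\in C^1[0,1]$ yields $\|f_{\sigma^n\omega,0}\circ T_{\omega,\varepsilon}^n-f_{\sigma^n\omega,0}\circ T_\omega^n\|_{C^0}\to 0$, and since $\phi_\omega\in L^1(m)$, the fiber integral converges to $\int_0^1\mathcal L_\omega^n(\phi_\omega)f_{\sigma^n\omega,0}\, dm$ as $\varepsilon\to 0$.

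To apply the dominated convergence theorem in $\omega$, I would use the $L^1(m)$-contraction of $\mathcal L_{\omega,\varepsilon}^n$ together with $\|f_{\sigma^n\omega,0}\|_{C^0}\le 2\|F_{\sigma^n\omega}\|_{C^0}$ and the estimate $\|\phi_\omega\|_{L^1(m)}\le 3\|F_\omega\|_{C^0}\|\hat h_\omega\|_{L^1(m)}$ (from $\|h_\omega\|_{L^1(m)}=1$, $|L(\omega,F_\omega)|\le \|F_\omega\|_{C^0}\|\hat h_\omega\|_{L^1(m)}$ and $\|f_{\omega,0}\|_{C^0}\le 2\|F_\omega\|_{C^0}$) to obtain the uniform-in-$\varepsilon$ bound
\[
\left|\int_0^1\mathcal L_{\omega,\varepsilon}^n(\phi_\omega)f_{\sigma^n\omega,0}\, dm\right|\le 6\|F_\omega\|_{C^0}\|F_{\sigma^n\omega}\|_{C^0}\|\hat h_\omega\|_{L^1(m)},
\]
which is essentially bounded in $\omega$ by \eqref{observ2} and \eqref{series2}, hence $\mathbb P$-integrable since $\mathbb P$ is a probability measure.

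I do not anticipate any serious obstacle: this lemma is a continuity statement for a fixed $n$, so there is no need to juggle the cone estimates and the decay rates $n^{-1/\alpha+1}$ that are required for the summability lemmas; the only nontrivial ingredient is the uniform convergence $T_{\omega,\varepsilon}^n\to T_\omega^n$, which is routine for a finite composition of LSV maps with continuously varying parameters.
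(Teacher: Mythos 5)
Your overall strategy---use Koopman duality to move the $\varepsilon$-dependence onto $f_{\sigma^n\omega,0}\circ T_{\omega,\varepsilon}^n$, then pass to the limit and finish with dominated convergence in $\omega$---is a genuinely different and potentially cleaner route than the paper's cone-based argument, and the estimates you give for the dominating function are fine. However, the central step as you have stated it is false. The claim that $T_{\omega,\varepsilon}^n \to T_\omega^n$ uniformly on $[0,1]$, proved ``by induction on $n$ using that each $T_{\omega,\varepsilon}$ is uniformly Lipschitz,'' does not hold: an LSV map is not Lipschitz on $[0,1]$, and in fact is not even continuous, since it has a jump at $x=1/2$ (value $\to 1$ from the left, $=0$ at $1/2$). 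The induction step breaks because, if $T_\omega^{n-1}(x)$ and $T_{\omega,\varepsilon}^{n-1}(x)$ straddle $1/2$, then applying $T_{\sigma^{n-1}\omega,\varepsilon}$ sends them to opposite ends of the interval; near preimages of $1/2$ the map $x\mapsto T_{\omega,\varepsilon}^n(x)$ does not converge uniformly as $\varepsilon\to 0$ for any $n\ge 2$. As a consequence your step ``$\|f_{\sigma^n\omega,0}\circ T_{\omega,\varepsilon}^n - f_{\sigma^n\omega,0}\circ T_\omega^n\|_{C^0}\to 0$'' is unjustified.

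The argument is nonetheless salvageable, and still more elementary than the paper's: the set $\{x:T_\omega^j(x)=1/2\text{ for some }0\le j\le n-1\}$ is finite (each branch inverse is single-valued, so this is a union of at most $2^n$ points), hence $m$-null. For every $x$ outside this set, all orbit points $T_\omega^j(x)$, $0\le j\le n-1$, are bounded away from $1/2$, and one checks that $T_{\omega,\varepsilon}^j(x)$ stays on the same branch and converges to $T_\omega^j(x)$ for $\varepsilon$ small; so $T_{\omega,\varepsilon}^n(x)\to T_\omega^n(x)$ for $m$-a.e.\ $x$ and hence $f_{\sigma^n\omega,0}(T_{\omega,\varepsilon}^n(x))\to f_{\sigma^n\omega,0}(T_\omega^n(x))$ $m$-a.e. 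Replacing your uniform-convergence step with dominated convergence (using the bound $\|f_{\sigma^n\omega,0}\|_{C^0}$ and $\phi_\omega\in L^1(m)$) yields the fiber-by-fiber convergence, and your domination in $\omega$ via \eqref{observ2} and \eqref{series2} then finishes the proof. By contrast, the paper truncates the series defining $\hat h_\omega$, decomposes into cone functions, estimates $\mathcal L_{\omega,\varepsilon}^n - \mathcal L_{\omega}^n$ by telescoping and the formula for $\partial_\gamma\mathcal L_\gamma$, and optimizes the truncation level in $\varepsilon$; that machinery is not needed for a pure continuity statement at fixed $n$, so the repaired version of your argument is a worthwhile simplification, but the ``uniform Lipschitz'' step as written is an error you must fix.
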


\begin{proof}
	We first show that \begin{equation}\label{limit1}\lim_{\ve \to 0} \bar{D}_{n,1}^2(\ve) = \bar{D}_{n,1}^2(0).\end{equation} We start by decomposing
	\begin{align*}
		\hat{h}_\omega &= - \sum_{i=0}^\infty 
		\delta( \sigma^{ - ( i + 1 ) } \omega ) 
		\cL_{ \sigma^{-i} \omega  }^i   ( X_{ \beta( \sigma^{ - ( i + 1) }  \omega ) }  
		N_{\beta( \sigma^{ - ( i + 1) } \omega )}( h_{ \sigma^{-(i+1)} \omega } ))' \\
		&= - \sum_{i=0}^{k} 
		\delta( \sigma^{ - ( i + 1 ) } \omega ) 
		\cL_{ \sigma^{-i} \omega  }^i   ( X_{ \beta( \sigma^{ - ( i + 1) }  \omega ) }  
		N_{\beta( \sigma^{ - ( i + 1) } \omega )}( h_{ \sigma^{-(i+1)} \omega } ))' \\
		&\quad - \sum_{i= k + 1 }^\infty 
		\delta( \sigma^{ - ( i + 1 ) } \omega ) 
		\cL_{ \sigma^{-i} \omega  }^i   ( X_{ \beta( \sigma^{ - ( i + 1) }  \omega ) }  
		N_{\beta( \sigma^{ - ( i + 1) } \omega )}( h_{ \sigma^{-(i+1)} \omega } ))'\\
		&=: \hat{h}_{\omega,k} + R_{\omega, k},
	\end{align*}
	where $k\in \mathbb N$. It follows from Theorem~\ref{DECC} and Lemma~\ref{LEM12} that for $\mathbb P$-a.e. $\omega \in \Omega$, 
	\[
	\|R_{\omega, k}\|_{L^1(m)} \le C_\alpha \sum_{i = k + 1}^\infty i^{ - 1 / \alpha + 1 } 
	\le C_{\alpha} k^{ - 1 / \alpha + 2 }\quad \text{for $k\in \mathbb N$.}
	\]
	Thus, for $k\in \mathbb N$,
	\begin{align*}
		|\bar{D}_{n,1}^2(\ve) - \bar{D}_{n,1}^2(0)|
		&\le \biggl| 
		\int_\Omega \int_0^1 
		[ \cL_{\omega, \ve}^n ( f_{\omega, 0}   \hat{h}_{\omega, k} ) 
		- \cL_{\omega, 0}^n ( f_{\omega, 0}   \hat{h}_{\omega, k} ) ] 
		f_{\sigma^n\omega, 0} \, d m \, d \bP(\omega)
		\biggr| \\
		&\phantom{\le}+ C_{\alpha, F} k^{ - 1 / \alpha + 2}.
	\end{align*}
	By $h_\omega \in \cC_* \cap \cC_2$
	and
	Lemma~\ref{LEM12},
	$$
	( X_{ \beta( \sigma^{ - ( i + 1) }  \omega ) } N_{\beta( \sigma^{ - ( i + 1) } \omega )}( h_{ \sigma^{-(i+1)} \omega } ))' 
	= \psi_1 - \psi_2,
	$$
	where $\psi_1, \psi_2 \in \cC_* \cap C^1(0,1]$. Hence, \cite[Lemma 3.3]{LS} implies that
	$$
	f_{\omega, 0}
	\cL_{\sigma^{-i} \omega }^i ( X_{ \beta( \sigma^{ - ( i + 1) }  \omega ) } N_{\beta( \sigma^{ - ( i + 1) } \omega )}( h_{ \sigma^{-(i+1)} \omega } ))'
	= \tilde{\psi}_{i,1} - \tilde{\psi}_{i,2},
	$$
	where $\tilde{\psi}_{i,j} \in C^1(0,1] \cap \cC_*$. Consequently,
	\begin{align}\label{eq:decom}
		f_{\omega, 0} \hat{h}_{\omega, k} = - \sum_{i=0}^k \sum_{j=1,2} 
		\delta( \sigma^{-(i+1)} \omega ) (-1)^{j - 1 } \tilde{\psi}_{i,j}.
	\end{align}
	Substituting \eqref{eq:decom} into the remaining expression, and using 
	$\varphi(x + \ve) - \varphi(x) = \ve \int_{0}^1 \varphi'( x +  t \ve) \, dt$,
	we obtain
	\begin{align*}
		&\int_\Omega \int_0^1 
		[ \cL_{\omega, \ve}^n ( f_{\omega, 0}   \hat{h}_{\omega, k} ) 
		- \cL_{\omega, 0}^n ( f_{\omega, 0}   \hat{h}_{\omega, k} ) ] 
		f_{\sigma^n\omega, 0} \, d m \, d \bP(\omega) \\
		&= - \sum_{i=0}^k \sum_{j=1,2}
		\delta( \sigma^{ - ( i + 1 ) } \omega ) (-1)^{j - 1 }
		\int_\Omega \int_0^1 
		[ \cL_{\omega, \ve}^n (   \tilde{\psi}_{i,j}  ) 
		- \cL_{\omega, 0}^n (    \tilde{\psi}_{i,j}   ) ] 
		f_{\sigma^n\omega, 0} \, d m \, d \bP(\omega) \\
		&= - \sum_{i=0}^k \sum_{j=1,2} \sum_{\ell = 0}^{ n - 1}
		\delta( \sigma^{ - ( i + 1 ) } \omega ) (-1)^{j - 1 }
		\int_\Omega \int_0^1 
		\cL_{ \sigma^{n -  \ell } \omega, \ve }^\ell [ \cL_{ \beta( \sigma^{n-1-\ell} \omega  ) + \ve \delta(\sigma^{n-1-\ell}  \omega ) } - \cL_{ \beta( \sigma^{n-1-\ell} \omega  ) }   ]   \\
		&\quad \times \cL_{ \omega }^{n-1-\ell} ( \tilde{\psi}_{i,j} )
		f_{\sigma^n\omega, 0} \, d m \, d \bP(\omega) \\
		&= - \sum_{i=0}^k \sum_{j=1,2} \sum_{\ell = 0}^{ n - 1}
		\delta( \sigma^{ - ( i + 1 ) } \omega ) (-1)^{j - 1 }
		\ve \delta(\sigma^{n-1-\ell}  \omega ) \\
		&\quad \times \int_\Omega \int_0^1 \int_0^1
		\cL_{ \sigma^{n -  \ell } \omega, \ve }^\ell \partial_\gamma \cL_{ \beta( \sigma^{n-1-\ell} \omega  ) + t \ve \delta(\sigma^{n-1-\ell}  \omega ) }   \cL_{ \omega }^{n-1-\ell} ( \tilde{\psi}_{i,j} ) 
		f_{\sigma^n\omega, 0} \,  dt \, d m \, d \bP(\omega).
	\end{align*}
	Moreover,
	\begin{align*}
		&\partial_\gamma \cL_{ \beta( \sigma^{n-1-\ell} \omega  ) + t \ve \delta(\sigma^{n-1-\ell}  \omega ) }   \cL_{ \omega }^{n-1-\ell} ( 
		\tilde{\psi}_{i,j} ) \\
		&= ( X_{  \beta( \sigma^{n-1-\ell} \omega  ) + t \ve \delta(\sigma^{n-1-\ell}  \omega ) } N_{  \beta( \sigma^{n-1-\ell} \omega  ) + t \ve \delta(\sigma^{n-1-\ell}  \omega ) } (   \cL_{ \omega }^{n-1-\ell} ( 
		\tilde{\psi}_{i,j} )  ) )'.
	\end{align*}
	It follows from \eqref{eq:cone_fun_diff},
	bounds on $X_\gamma$ and $X_\gamma'$ in~\cite[(2.3) and (2.4)]{BT}, 
	and invariance properties of the cones in \eqref{cone_inv} that
	$$
	\Vert ( X_{  \beta( \sigma^{n-1-\ell} \omega  ) + t \ve \delta(\sigma^{n-1-\ell}  \omega ) } N_{  \beta( \sigma^{n-1-\ell} \omega  ) + t \ve \delta(\sigma^{n-1-\ell}  \omega ) } (   \cL_{ \omega }^{n-1-\ell} ( \tilde{\psi}_{i,j} )  ) )' \Vert_{L^1(m)} \le C_{\alpha, \underline{\alpha}, F}.
	$$ 
	Consequently,
	\begin{align*}
		&\biggl| \int_\Omega \int_0^1 
		[ \cL_{\omega, \ve}^n ( f_{\omega, 0}   \hat{h}_{\omega, k} ) 
		- \cL_{\omega, 0}^n ( f_{\omega, 0}   \hat{h}_{\omega, k} ) ] 
		f_{\sigma^n\omega, 0} \, d m \, d \bP(\omega)  \biggr| 
		\le C_{\alpha, \underline{\alpha}, F} |\ve| kn.
	\end{align*}
	We choose $k = \lfloor  |\ve|^{ - \alpha / ( 1 - \alpha )  } \rfloor$ so that 
	$|\ve| k$ and $k^{- 1 / \alpha + 2}$ are approximately equal. Since $\alpha < 1/2$, 
	$$
	|\bar{D}_{n,1}^2(\ve) - \bar{D}_{n,1}^2(0)| \le C_{\alpha, \underline{\alpha}, F} (|\ve| kn +  k^{- 1 / \alpha + 2} )\le C_{\alpha, \underline{\alpha}, F, n}|\ve|^{ 1 - \alpha / ( 1 - \alpha ) } \to 0 \quad 
	\text{as $\ve \to 0$}.
	$$
	Hence, \eqref{limit1} holds.
	
	We now claim that 
	\begin{equation}\label{limit2}
		\lim_{\ve \to 0}\bar{D}_{n, 2}^2(\ve)=\bar{D}_{n, 2}(0).
	\end{equation}
	Note that 
	\[
	\begin{split}
		\mathcal L_{\omega, \varepsilon}^n (h_\omega)-\mathcal L_\omega^n (h_\omega) &=\sum_{j=0}^{n-1} \mathcal L_{\sigma^{j+1} \omega, \varepsilon}^{n-j-1}(\mathcal L_{\sigma^{j}\omega, \varepsilon}-\mathcal L_{\sigma^{j}\omega})\mathcal L_\omega^{j}(h_\omega) \\
		&=\sum_{j=0}^{n-1} \int_{\beta(\sigma^{j}\omega)}^{\beta(\sigma^{j}\omega)+\ve \delta(\sigma^{j}\omega)}\mathcal L_{\sigma^{j +1}\omega, \varepsilon}^{n-j-1}[\partial_\gamma \mathcal L_\gamma (h_{\sigma^{j}\omega})]\, d\gamma \\
		&=-\sum_{j=0}^{n-1} \int_{\beta(\sigma^{j}\omega)}^{\beta(\sigma^{j}\omega)+\ve \delta(\sigma^{j}\omega)} \mathcal L_{\sigma^{j+1} \omega, \varepsilon}^{n-j-1}[(X_\gamma N_\gamma(h_{\sigma^{j}\omega}))']\, d\gamma.
	\end{split}
	\]
	By applying Lemma~\ref{LEM12} we conclude (using the convention $0^{-1/\alpha+1}:=1$) that 
	\[
	\|\mathcal L_{\omega, \varepsilon}^n (h_\omega)-\mathcal L_\omega^n (h_\omega)\|_{L^1(m)}\le C_\alpha |\ve|\sum_{j=0}^{n-1} (n-j-1)^{-1/\alpha+1}\le C_\alpha|\ve|,
	\]
	which easily implies~\eqref{limit2}. Finally, it remains to note that  the conclusion of the lemma follows readily from~\eqref{limit1} and~\eqref{limit2}.
\end{proof}

We now consider $D_n^1(\varepsilon)$ and note that 
\[
D_n^1(\varepsilon)=\frac{1}{\varepsilon}\int_\Omega \int_0^1 (\mathcal L_{\omega, \varepsilon}^n-\mathcal L_\omega^n)(f_{\omega, 0}h_\omega) f_{\sigma^n \omega, 0}\, dm\, d\mathbb P(\omega).
\]
\begin{lemma}\label{summability2}
	We have
	\begin{equation}\label{eq:summability2-1}
		|D_n^1(\varepsilon)| \le C_{\alpha, \underline{\alpha}, F}(n^{ 1 - 1 / \gamma }
		+
		n^{1-\frac{1-\alpha}{2\alpha}}), \quad n<|\varepsilon|^{-\gamma}.
	\end{equation}
\end{lemma}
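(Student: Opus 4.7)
The proof would begin with the same telescoping decomposition employed in the proof of Theorem~\ref{contvariance}, expressing $D_n^1(\varepsilon)$ as
$$D_n^1(\varepsilon)=-\frac{1}{\varepsilon}\sum_{j=0}^{n-1}\int_\Omega\int_{\beta(\sigma^j\omega)}^{\beta(\sigma^j\omega)+\varepsilon\delta(\sigma^j\omega)}\Psi_j(\gamma,\omega)\,d\gamma\,d\mathbb{P}(\omega),$$
where $\phi_j^\omega:=\mathcal{L}_\omega^j(f_{\omega,0}h_\omega)$ and $\Psi_j(\gamma,\omega):=\int_0^1\mathcal{L}_{\sigma^{j+1}\omega,\varepsilon}^{n-j-1}((X_\gamma N_\gamma\phi_j^\omega)')\,f_{\sigma^n\omega,0}\,dm$. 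The elementary estimate $|\int_\beta^{\beta+\varepsilon\delta}\Psi_j\,d\gamma|\le|\varepsilon\delta|\cdot\sup_\gamma|\Psi_j|$ reduces matters to bounding $\sup_\gamma\int_\Omega|\Psi_j(\gamma,\omega)|\,d\mathbb{P}$ separately for each $j$.

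The $n^{1-1/\gamma}$ contribution in \eqref{eq:summability2-1} follows from a crude bound $|\Psi_j(\gamma,\omega)|\le C_{\alpha,\underline\alpha,F}$, derived exactly as in the last part of the proof of Theorem~\ref{contvariance}: using \eqref{eq:decomp_c1} to write $f_{\omega,0}h_\omega=g_1-g_2$ with $g_i\in\mathcal{C}_*\cap C^1(0,1]$, the invariance of $\mathcal{C}_*\cap C^1(0,1]$ under $\mathcal{L}_\omega$ and $N_\gamma$, the cone derivative bound \eqref{eq:cone_fun_diff}, the bounds on $X_\gamma,X_\gamma'$ from \cite[(2.3)-(2.4)]{BT}, and the $L^1$-contractivity of $\mathcal{L}_{\sigma^{j+1}\omega,\varepsilon}^{n-j-1}$. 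Summing $n$ terms, each of size $\le C|\varepsilon|$, and invoking $n<|\varepsilon|^{-\gamma}$ yields $n|\varepsilon|\le n^{1-1/\gamma}$.

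For the $n^{1-(1-\alpha)/(2\alpha)}$ contribution, I would mimic the argument of Lemma~\ref{lem:aux2}, applied to each $\Psi_j$ separately. Introduce an auxiliary parameter $r>0$ and, for each $g_i$, approximate
$(X_\gamma N_\gamma\mathcal{L}_\omega^j(g_i))'\approx-r^{-1}(\mathcal{L}_{\gamma+r}-\mathcal{L}_\gamma)\mathcal{L}_\omega^j(g_i)$
with $L^1$-error of order $r^{1-2\alpha}$ via an analog of Theorem~\ref{sharpl} applied to general cone elements in place of $h_\omega$. The main approximant is a mean-zero difference of two cone densities, which can be rewritten in $\varphi h$-form with $h:=\mathcal{L}_\gamma\mathcal{L}_\omega^j(g_i)\in\mathcal{C}_*$ and $\varphi\in C^1$ controlled via the smoothness of $\gamma\mapsto\mathcal{L}_\gamma\mathcal{L}_\omega^j(g_i)$, so that \eqref{dec} yields decay $r^{-1}(n-j-1)^{-1/\alpha+1}$ after applying $\mathcal{L}_{\sigma^{j+1}\omega,\varepsilon}^{n-j-1}$. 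Balancing the two error sources by optimizing $r$ and summing the result over $j=0,\dots,n-1$ produces the stated bound.

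The principal obstacle is justifying the $O(r^{1-2\alpha})$ approximation rate for general cone elements $\phi\in\mathcal{C}_*\cap C^1(0,1]$ rather than just for the invariant density $h_\omega$ considered in Theorem~\ref{sharpl}. Since the proof of \cite[Theorem 14]{DGTS} (which underlies Theorem~\ref{sharpl}) exploits the cone structure and invariance properties rather than specific features of $h_\omega$, the argument should extend to arbitrary cone elements with constants depending only on $\alpha,\underline\alpha$ and the cone parameters. A secondary difficulty is verifying that the $\varphi h$-decomposition of the main approximant admits the required $C^1$ control on $\varphi$; this relies on the $C^2$-regularity of $F_\omega$ from \eqref{observ2} to ensure that $\mathcal{L}_\omega^j(g_i)$ is sufficiently smooth, together with the positive lower bound on cone densities coming from the monotonicity property of $\mathcal{C}_*$.
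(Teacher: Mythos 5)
Your overall strategy — peel off the $j$-th factor of the cocycle into an integral of $\partial_\gamma\mathcal L_\gamma$ over $\gamma\in[\beta,\beta+\varepsilon\delta]$, bound $\Psi_j$ crudely when $n-j$ is large and by a finite-difference approximation when $j$ is large, then balance with a free parameter $r$ — is essentially correct and is a close variant of the paper's proof, which instead Taylor-expands $D_n^1(\varepsilon)=D_{n,1}^1(\varepsilon)+D_{n,2}^1(\varepsilon)$ to first order in $\varepsilon$ before treating the first-order term $D_{n,1}^1=\sum_j I_j$ by exactly this $r$-balancing device. There are, however, two substantive problems in your write-up.

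First, the arithmetic in your ``$n^{1-1/\gamma}$ contribution'' paragraph does not close. From
$D_n^1(\varepsilon)=-\varepsilon^{-1}\sum_j\int_\Omega\int_\beta^{\beta+\varepsilon\delta}\Psi_j\,d\gamma\,d\mathbb P$
the crude bound $\sup_\gamma|\Psi_j|\le C$ gives
$|D_n^1(\varepsilon)|\le \varepsilon^{-1}\sum_j |\varepsilon\delta|\, C\le Cn$,
not $Cn|\varepsilon|$: the $1/\varepsilon$ in front cancels the factor $|\varepsilon|$ from the width of the $\gamma$-interval. To get an extra power of $\varepsilon$ one must go to second order in the Taylor expansion, which is exactly why the paper isolates $D_{n,2}^1(\varepsilon)$ as the integral remainder $\varepsilon\int_0^1(1-t)\cdots\partial_\gamma^2\mathcal L_{\beta+t\varepsilon\delta}\cdots\,dt$; the visible prefactor $\varepsilon$ is what produces $|D_{n,2}^1|\le Cn|\varepsilon|\le Cn^{1-1/\gamma}$. (In your direct route, once you fix the rate issue below, the $n^{1-1/\gamma}$ term is in fact not needed — but the justification you offer for it is incorrect.)

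Second, the finite-difference approximation rate you invoke, $O(r^{1-2\alpha})$ ``via an analog of Theorem~\ref{sharpl},'' is a misdiagnosis and is also too weak. Theorem~\ref{sharpl}'s $|\varepsilon|^{1-2\alpha}$ rate concerns the convergence of $\varepsilon^{-1}(h_{\omega,\varepsilon}-h_\omega)$ to the infinite series $\hat h_\omega$; the $1-2\alpha$ arises from truncating a history sum of length $\sim|\varepsilon|^{-\alpha}$ and has nothing to do with a single application of $\partial_\gamma\mathcal L_\gamma$. The relevant estimate is the elementary Taylor identity
$r^{-1}(\mathcal L_{\gamma+r}-\mathcal L_\gamma)\phi=\partial_\gamma\mathcal L_\gamma\phi+r\int_0^1(1-t)\partial_\gamma^2\mathcal L_{\gamma+tr}\phi\,dt$,
combined with Lemma~\ref{auxiliarly2}, which gives an $O(r)\,\|\phi\|_{L^1}$ error for $\phi\in\mathcal C_*\cap\mathcal C_2$. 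With $O(r)$ one balances $r\asymp r^{-1}n^{-1/\alpha+1}$ and obtains $r=n^{-(1-\alpha)/(2\alpha)}$ and the stated exponent $1-\frac{1-\alpha}{2\alpha}$ (summable for $\alpha<1/5$). With your $O(r^{1-2\alpha})$ one would instead get $r=n^{-1/(2\alpha)}$ and $n^{1-\frac{1-2\alpha}{2\alpha}}$, which is summable only for $\alpha<1/6$ and does not prove the lemma as stated. Finally, your ``secondary difficulty'' about producing a $\varphi h$-decomposition of the main approximant with $C^1$ control on $\varphi$ is not actually needed: after replacing $\partial_\gamma\mathcal L_\gamma$ by the finite difference, one bounds $\|\mathcal L^{n-j-1}_{\sigma^{j+1}\omega,\varepsilon}(\mathcal L_{\gamma+r}-\mathcal L_\gamma)\mathcal L_\omega^j(f_{\omega,0}h_\omega)\|_{L^1}\le 2\|\mathcal L_\omega^j(f_{\omega,0}h_\omega)\|_{L^1}$ by plain $L^1$-contractivity of transfer operators, and the decay comes solely from $\|\mathcal L_\omega^j(f_{\omega,0}h_\omega)\|_{L^1}\le Cj^{-1/\alpha+1}$ via \eqref{dec} and Lemma~\ref{auxiliary}.
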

\begin{proof}
	Observe that \begin{equation}\label{D12}
		\begin{split}
			&D_n^1(\varepsilon)= \\
			&=\int_\Omega \int_0^1 \sum_{j=0}^{n-1}\mathcal L_{\sigma^{j+1}\omega, \varepsilon}^{n-j-1}\left (\varepsilon^{-1}(\mathcal L_{\sigma^j \omega, \varepsilon}-\mathcal L_{\sigma^j \omega})\mathcal L_{\omega}^j(f_{\omega, 0}h_\omega)\right)f_{\sigma^n \omega, 0}\, dm\, d\mathbb P(\omega) \\
			&=\int_\Omega\int_0^1 \sum_{j=0}^{n-1}\delta (\sigma^j \omega)\mathcal L_{\sigma^{j+1}\omega, \varepsilon}^{n-j-1}\left (\partial_\gamma \mathcal L_{\beta(\sigma^j \omega)}(\mathcal L_\omega^j(f_{\omega, 0}h_\omega))\right )f_{\sigma^n \omega,0}\, dm\, d\mathbb P(\omega) \\
			&\phantom{=}+\varepsilon\int_\Omega \sum_{j=0}^{n-1}(\delta (\sigma^j \omega))^2\int_0^1 \int_0^1  (1-t) f_{\sigma^n \omega, 0}\mathcal L_{\sigma^{j+1}\omega, \varepsilon}^{n-j-1}(\partial^2_\gamma \mathcal L_{\beta(\sigma^j \omega)+t\varepsilon \delta (\sigma^j \omega)}(\mathcal L_\omega^j(f_{\omega, 0}h_\omega)))\, dm\, dt\, d\mathbb P(\omega) \\
			&=:D_{n,1}^1(\varepsilon)+D_{n, 2}^1(\varepsilon).
		\end{split}
	\end{equation}
	It follows from Lemmas~\ref{auxiliary} and~\ref{auxiliarly2} that
	\begin{equation}\label{fri1}
		|D_{n,2}^1(\varepsilon)| \le C_{\alpha, \underline{\alpha}, F} n|\varepsilon| \le C_{\alpha, \underline{\alpha}, F}n^{1-1/\gamma}.
	\end{equation}

	We now turn to $D_{n,1}^1(\varepsilon)$. By definition,
	\[
	\begin{split}
		D_{n,1}^1(\varepsilon)
		=\sum_{j=0}^{n-1}I_j, 
	\end{split}
	\]
	where 
	\[
	I_j:=  \int_\Omega \delta (\sigma^j \omega) \int_0^1 \mathcal L_{\sigma^{j+1}\omega, \varepsilon}^{n-j-1}\left (\partial_\gamma \mathcal L_{\beta(\sigma^j \omega)}(\mathcal L_\omega^j(f_{\omega, 0}h_\omega))\right )f_{\sigma^n \omega,0}\, dm\, d\mathbb P(\omega).
	\]
	For $j$ such that $n-j-1\ge \lfloor n/2\rfloor-1$, we have (using Lemma~\ref{auxiliary} and Theorem~\ref{DECC}) that 
	\begin{equation}\label{ij1}
		|I_j| \le C_{\alpha, F} (n-j-1)^{-1/\alpha+1}\le C_{\alpha, F} n^{-1/\alpha+1}.
	\end{equation}
	We now consider $j$ so that $j\ge \lfloor n/2\rfloor$. For $r>0$ to be fixed later, we have by writing 
	\[
	r^{-1}(\mathcal L_{\beta(\sigma^j \omega)+r}-\mathcal L_{\beta (\sigma^j \omega)})(\varphi)=\partial_\gamma \mathcal L_{\beta(\sigma^j \omega)}(\varphi)+r\int_0^1(1-t)\partial_\gamma^2 \mathcal L_{\beta(\sigma^j \omega)+tr}(\varphi)\, dt,
	\]
	and using Lemmas~\ref{auxiliary} and~\ref{auxiliarly2} 
	that
	\[
	\left \| \partial_\gamma \mathcal L_{\beta(\sigma^j \omega)}(\mathcal L_\omega^j(f_{\omega, 0}h_\omega))-r^{-1}(\mathcal L_{\beta(\sigma^j \omega)+r}-\mathcal L_{\beta (\sigma^j \omega)})(\mathcal L_\omega^j (f_{\omega, 0}h_\omega))\right \|_{L^1(m)} \le C_{\alpha, \underline{\alpha}, F}r.
	\]
	Writing $I_j$ as 
	\[
	\begin{split}
		&I_j=\\
		&=\int_\Omega \delta(\sigma^j \omega)\int_0^1 \mathcal L_{\sigma^{j+1}\omega, \ve }^{n-j-1}\left ( (\partial_\gamma \mathcal L_{\beta( \sigma^j \omega)}-r^{-1}(\mathcal L_{\beta(\sigma^j \omega)+r}-\mathcal L_{\beta (\sigma^j \omega)}))(\mathcal L_\omega^j (f_{\omega, 0}h_\omega))\right )f_{\sigma^n \omega, 0}\, dm\, d\mathbb P(\omega)\\
		&\phantom{=}+r^{-1}\int_\Omega \delta(\sigma^j \omega)\int_0^1 \mathcal L_{\sigma^{j+1}\omega, \ve}^{n-j-1}\left (((\mathcal L_{\beta(\sigma^j \omega)+r}-\mathcal L_{\beta (\sigma^j \omega)})(\mathcal L_\omega^j (f_{\omega, 0}h_\omega))\right )f_{\sigma^n \omega, 0}\, dm\, d\mathbb P(\omega),
	\end{split}
	\]
	we have that 
	\begin{equation}\label{upto}
		|I_j| \le C_{\alpha, \underline{\alpha}, F}r+C_{\alpha, F}r^{-1}j^{-1/\alpha+1}\le C_{\alpha, \underline{\alpha}, F}(r+r^{-1}n^{-1/\alpha+1}),
	\end{equation}
	where we used that 
	\[
	\|\mathcal L_{\sigma^{j+1}\omega, \ve}^{n-j-1}\left (((\mathcal L_{\beta(\sigma^j \omega)+r}-\mathcal L_{\beta (\sigma^j \omega)})(\mathcal L_\omega^j (f_{\omega, 0}h_\omega))\right )\|_{L^1(m)}\le 2\|\mathcal L_\omega^j (f_{\omega, 0}h_\omega)\|_{L^1(m)}\le 2C_{\alpha, F}j^{-1/\alpha+1}.
	\]
	Setting $r=n^{-\frac{1-\alpha}{2\alpha}}$, it follows that
	\begin{equation}\label{ij2}
		|I_j| \le C_{\alpha, \underline{\alpha}, F} n^{-\frac{1-\alpha}{2\alpha}}, \quad j\ge \lfloor n/2\rfloor.
	\end{equation}
	From~\eqref{ij1} and~\eqref{ij2} we conclude that
	\begin{equation}\label{fri3}
		|D_{n,1}^1(\ve)|\le \sum_{j=0}^{n-1}|I_j| \le C_{\alpha, \underline{\alpha}, F}n^{1-\frac{1-\alpha}{2\alpha}}.
	\end{equation}
	Estimate \eqref{eq:summability2-1} now follows readily from~\eqref{fri1}  and~\eqref{fri3}.
\end{proof}

%Finally, suppose that \eqref{eq:special_F} holds. Then it follows from \eqref{eq:fast_ml_2} that, 
%	for $\bP$-a.e. $\omega \in \Omega$,\footnote{In~\eqref{eq:fast_ml_2} you have $\max \{\beta, \alpha\}$ which in our case is $\alpha$} 
%$$ 
%	\|\mathcal L_\omega^j (f_{\omega, 0}h_\omega)\|_{L^1(m)} \le   C_{\alpha, \underline{\alpha}, F} n^{ 1 - (1 + \eta)/\alpha}, \quad  j\ge \lfloor n/2\rfloor.
$$
%	Thus, choosing $r %= n^{ \frac12 (  1 - \frac{1 + \eta }{\alpha} )  }$, we obtain 
$$

Observe that the upper bound in Lemma \ref{summability1} 
is summable
if $\alpha < 1/4$, and the upper bound in Lemma \ref{summability2}
is summable if $\alpha < 1/5$. To complete the proof of 
Theorem \ref{diffvar}, it remains to prove the following:

\begin{lemma}\label{limitexists2}
	We have
	\[
	\lim_{\varepsilon \to 0}D_n^1(\varepsilon)=\sum_{j=0}^{n-1}\int_\Omega \delta (\sigma^j \omega)\int_0^1\mathcal L_{\sigma^{j+1}\omega}^{n-j-1} \left (\partial_\gamma \mathcal L_{\beta(\sigma^j \omega)}(\mathcal L_\omega^j(f_{\omega, 0}h_\omega))\right )f_{\sigma^n \omega, 0}\, dm\, d\mathbb P(\omega).
	\]
\end{lemma}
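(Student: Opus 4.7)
I would begin from the decomposition \eqref{D12}, which writes
$D_n^1(\varepsilon)=D_{n,1}^1(\varepsilon)+D_{n,2}^1(\varepsilon)$, and observe that the claimed limit is nothing but $D_{n,1}^1(0)$, since when $\varepsilon=0$ the operator $\mathcal L_{\sigma^{j+1}\omega,\varepsilon}^{n-j-1}$ reduces to $\mathcal L_{\sigma^{j+1}\omega}^{n-j-1}$. Thus the lemma reduces to two assertions: $D_{n,2}^1(\varepsilon)\to 0$ and $D_{n,1}^1(\varepsilon)\to D_{n,1}^1(0)$ as $\varepsilon\to 0$. The first is immediate from the explicit expression of $D_{n,2}^1(\varepsilon)$ in \eqref{D12}: the prefactor $\varepsilon$ is multiplied by a quantity bounded uniformly in $\varepsilon$ by means of the contraction property of transfer operators on $L^1(m)$ together with Lemmas~\ref{auxiliary} and~\ref{auxiliarly2} (invoked in the same way as in the proof of Lemma~\ref{summability2}), which yields $|D_{n,2}^1(\varepsilon)|\le C_{\alpha,\underline\alpha,F}\,n\,|\varepsilon|$.

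For the second assertion, since the $j$-sum is finite, it suffices to show for each fixed $j\in\{0,\ldots,n-1\}$ that
\[
\int_\Omega \delta(\sigma^j\omega)\int_0^1 \mathcal L_{\sigma^{j+1}\omega,\varepsilon}^{n-j-1}(g_j(\omega))\,f_{\sigma^n\omega,0}\,dm\,d\mathbb P(\omega)\longrightarrow \int_\Omega \delta(\sigma^j\omega)\int_0^1 \mathcal L_{\sigma^{j+1}\omega}^{n-j-1}(g_j(\omega))\,f_{\sigma^n\omega,0}\,dm\,d\mathbb P(\omega),
\]
where $g_j(\omega):=\partial_\gamma \mathcal L_{\beta(\sigma^j\omega)}(\mathcal L_\omega^j(f_{\omega,0}h_\omega))$. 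To this end, I would apply the telescoping identity
\[
\mathcal L_{\sigma^{j+1}\omega,\varepsilon}^{n-j-1}-\mathcal L_{\sigma^{j+1}\omega}^{n-j-1}=\sum_{k=0}^{n-j-2}\mathcal L_{\sigma^{j+k+2}\omega,\varepsilon}^{n-j-k-2}\bigl(\mathcal L_{\sigma^{j+k+1}\omega,\varepsilon}-\mathcal L_{\sigma^{j+k+1}\omega}\bigr)\mathcal L_{\sigma^{j+1}\omega}^{k},
\]
evaluated on $g_j(\omega)$. Using that each $\mathcal L_{\cdot,\varepsilon}$ is an $L^1(m)$-contraction, and the identity $\mathcal L_{\omega,\varepsilon}(\phi)-\mathcal L_\omega(\phi)=-\int_{\beta(\omega)}^{\beta(\omega)+\varepsilon\delta(\omega)}(X_\gamma N_\gamma(\phi))'\,d\gamma$ (as in the proof of Theorem~\ref{contvariance}), I can bound the $L^1(m)$-norm of each summand by $|\varepsilon|$ times $\sup_\gamma\|(X_\gamma N_\gamma(\mathcal L_{\sigma^{j+1}\omega}^k g_j(\omega)))'\|_{L^1(m)}$. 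Invoking Lemma~\ref{auxiliarly2} (or the decomposition via $\cC_*\cap C^1(0,1]$ together with the bounds on $X_\gamma,X_\gamma'$ and \eqref{eq:cone_fun_diff}), this last quantity is uniformly bounded by a constant $C_{\alpha,\underline\alpha,F}$ independent of $\omega$ and $\varepsilon$. Consequently
\[
\|\mathcal L_{\sigma^{j+1}\omega,\varepsilon}^{n-j-1}(g_j(\omega))-\mathcal L_{\sigma^{j+1}\omega}^{n-j-1}(g_j(\omega))\|_{L^1(m)}\le C_{\alpha,\underline\alpha,F}\,n\,|\varepsilon|
\]
uniformly in $\omega$.

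From here, pairing with $f_{\sigma^n\omega,0}$ (uniformly bounded in $C^0$ by $2\,\mathrm{esssup}_\omega\|F_\omega\|_{C^0}$), multiplying by $\delta(\sigma^j\omega)\in[0,1)$, and integrating over $\Omega$, the dominated convergence theorem gives convergence of the $j$-th term at rate $O(|\varepsilon|)$. Summing over the finite range $j=0,\ldots,n-1$ yields $|D_{n,1}^1(\varepsilon)-D_{n,1}^1(0)|\le C_{\alpha,\underline\alpha,F,n}|\varepsilon|$, which completes the proof. I do not anticipate a serious obstacle here, because the operator identities are exactly those already used in the proofs of Theorem~\ref{contvariance} and Lemma~\ref{summability2}; the only care required is to keep the constants uniform in $\omega$, which is ensured by the cone-preservation of $\mathcal L_\omega^j$ and the $\omega$-independent bounds on $X_\gamma,X_\gamma'$ from \cite[(2.3)--(2.4)]{BT}.
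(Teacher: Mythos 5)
Your approach is essentially the same as the paper's: decompose $D_n^1=D_{n,1}^1+D_{n,2}^1$ via \eqref{D12}, kill $D_{n,2}^1(\varepsilon)$ with the $O(n|\varepsilon|)$ bound \eqref{fri1}, and then further split $D_{n,1}^1(\varepsilon)-D_{n,1}^1(0)$ by telescoping $\mathcal L_{\sigma^{j+1}\omega,\varepsilon}^{n-j-1}-\mathcal L_{\sigma^{j+1}\omega}^{n-j-1}$ (the paper calls this contribution $\mathcal D_{n,2}^1(\varepsilon)$), bounding each summand by $|\varepsilon|$ times a uniform $L^1$ constant obtained from the cone-decomposition of $\partial_\gamma\mathcal L_\gamma(\mathcal L_\omega^j(f_{\omega,0}h_\omega))$. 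One small slip: the $L^1$ bound $\Vert(X_\gamma N_\gamma(\mathcal L_{\sigma^{j+1}\omega}^k g_j))'\Vert_{L^1}\le C$ comes from Lemma~\ref{auxiliarly3} (first $\gamma$-derivative, applied to a difference of $\cC_*\cap C^1(0,1]$ functions supplied by Lemma~\ref{auxiliary} and \cite[Lemma 12]{DGTS}), not from Lemma~\ref{auxiliarly2}, which concerns $\partial_\gamma^2$; your parenthetical hedge already points in the right direction, so the argument goes through once the reference is corrected.
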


\begin{proof}
	We decompose as in \eqref{D12}:
	$$
	D_n^1(\varepsilon) = D_{n,1}^1(\varepsilon)+D_{n, 2}^1(\varepsilon),
	$$
	where $\lim_{\ve \to 0} D_{n, 2}^1(\varepsilon) = 0$ (see~\eqref{fri1}). Next,
	\begin{align*}
		D_{n,1}^1(\varepsilon) &=\int_\Omega\int_0^1 \sum_{j=0}^{n-1}\delta (\sigma^j \omega)\mathcal L_{\sigma^{j+1}\omega}^{n-j-1}\left (\partial_\gamma \mathcal L_{\beta(\sigma^j \omega)}(\mathcal L_\omega^j(f_{\omega, 0}h_\omega))\right )f_{\sigma^n \omega,0}\, dm\, d\mathbb P(\omega)\\
		&\phantom{=}+\int_\Omega\int_0^1 \sum_{j=0}^{n-1}\delta (\sigma^j \omega)(\mathcal L_{\sigma^{j+1}\omega, \varepsilon}^{n-j-1}-\mathcal L_{\sigma^{j+1}\omega}^{n-j-1})\left (\partial_\gamma \mathcal L_{\beta(\sigma^j \omega)}(\mathcal L_\omega^j(f_{\omega, 0}h_\omega))\right )f_{\sigma^n \omega,0}\, dm\, d\mathbb P(\omega)\\
		&=:\mathcal D_{n, 1}^1+\mathcal D_{n, 2}^1(\ve).
	\end{align*}
	Furthermore,
	\[
	\begin{split}
		& \mathcal D_{n, 2}^1(\ve)=\\
		&= \sum_{j=0}^{n-1}\sum_{k=0}^{n-j-2}\int_\Omega \delta(\sigma^j \omega) \int_0^1 \mathcal L_{\sigma^{j+k+2}\omega, \varepsilon}^{n-j-k-2}\int_{\beta(\sigma^{j+1}\omega)}^{\beta(\sigma^{j+1}\omega+\varepsilon \delta(\sigma^{(j+1)}\omega)}\partial_\gamma \mathcal L_\gamma (\mathcal L_{\sigma^{j+1}\omega}^k (\partial_\gamma \mathcal L_{\beta(\sigma^j \omega)}(\mathcal L_{\omega}^j(f_{\omega, 0}h_\omega))) \\
		&\phantom{=}\times f_{\sigma^n \omega, 0}\, d\gamma \, dm\, d\mathbb P(\omega).
	\end{split}
	\]
	Using Lemma~\ref{LEM12} together with Lemmas~\ref{auxiliary} and~\ref{auxiliarly3}, we have
	$$
	|\mathcal D_{n, 2}^1(\ve)| \le C_{\alpha, \underline{\alpha}, F}n^2|\varepsilon|.
	$$
	Hence, $\lim_{\ve \to 0} \mathcal D_{n, 2}^1(\ve) = 0$. This immediately implies the desired conclusion.
	
\end{proof}

\subsection{An improvement of Theorem~\ref{diffvar} for a special class of observables}
Here we indicate how we can relax the assumption $\alpha < 1/5$ in the statement of Theorem~\ref{diffvar} provided that we impose additional assumptions on $F$. We first note that the restriction $\alpha < 1/5$ is used only in Lemma~\ref{summability2}; the rest of the proof remains valid under the weaker condition $\alpha < 1/4$.

We continue to assume that $F\colon \Omega \times [0, 1]\to \mathbb R$ is measurable with $F(\omega, \cdot)\in C^2[0, 1]$ for $\omega \in \Omega$ and that~\eqref{observ2} is valid. We need the following
version of Lemma ~\ref{summability2}.

\begin{lemma}\label{improvedsum}
	Suppose that there exist $0 \le \eta \le \alpha$ and $C>0$ such that, for 
	$\bP$-a.e. $\omega \in \Omega$ and $x \in [0,1]$,
	\begin{align}\label{eq:special_F}
		|F_\omega(x)| \le C x^\eta \quad \text{and} \quad 
		\int_0^1 F_\omega h_\omega \, d m = 0.
	\end{align}
	%with $0 \le \eta < \alpha$ and $\textnormal{ess sup}_{\omega \in \Omega} C_\omega < \infty$.
	Then,
	\begin{align}\label{eq:summability2-2}
		|D_n^1(\varepsilon)| \le C_{\alpha, \underline{\alpha}, F}(n^{ 1 - 1 / \gamma }+ 
		n^{ 1 - \frac{1 + \eta - \alpha }{ 2 \alpha } } ), \quad n<|\varepsilon|^{-\gamma}.
	\end{align}
\end{lemma}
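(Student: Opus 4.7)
The plan is to revisit the proof of Lemma~\ref{summability2} and replace the standard memory-loss bound $\|\mathcal L_\omega^j(f_{\omega,0}h_\omega)\|_{L^1(m)}\le C_{\alpha,F}j^{-1/\alpha+1}$ (which comes from \eqref{dec}) by the sharper bound available under \eqref{eq:special_F}. Since $\int_0^1 F_\omega h_\omega\,dm=0$, we have $f_{\omega,0}=F_\omega$, so $|f_{\omega,0}(x)|\le Cx^\eta$ and $\int_0^1 f_{\omega,0}h_\omega\,dm=0$. Therefore \eqref{eq:fast_ml_2} is applicable with $\beta=\eta$, and together with \eqref{observ2} this yields, for $\bP$-a.e.\ $\omega\in\Omega$,
\begin{equation*}
\|\mathcal L_\omega^j(f_{\omega,0}h_\omega)\|_{L^1(m)}\le C_{\alpha,\underline\alpha,F}\,j^{\,1-(1+\eta)/\alpha}\quad (j\in\mathbb N),
\end{equation*}
since $\eta\le\alpha$ by assumption.

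Next, I would use the same decomposition $D_n^1(\varepsilon)=D_{n,1}^1(\varepsilon)+D_{n,2}^1(\varepsilon)$ as in \eqref{D12}. The second-order remainder $D_{n,2}^1(\varepsilon)$ is bounded exactly as in \eqref{fri1} by $C_{\alpha,\underline\alpha,F}\,n|\varepsilon|\le C_{\alpha,\underline\alpha,F}\,n^{1-1/\gamma}$, which already matches the first term in \eqref{eq:summability2-2}. It therefore suffices to improve the bound on $D_{n,1}^1(\varepsilon)=\sum_{j=0}^{n-1}I_j$. For $j$ with $n-j-1\ge\lfloor n/2\rfloor-1$, I would reuse the bound $|I_j|\le C_{\alpha,F}(n-j-1)^{-1/\alpha+1}$ obtained in \eqref{ij1}; summing over this range produces a contribution of size $O(n^{2-1/\alpha})$, which is dominated by $n^{1-(1+\eta-\alpha)/(2\alpha)}$ whenever $\alpha+\eta<1$, and in particular under our standing assumption $\alpha<1/2$.

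The main gain occurs for $j\ge\lfloor n/2\rfloor$. Here I repeat the comparison between $\partial_\gamma\mathcal L_{\beta(\sigma^j\omega)}$ and the finite difference $r^{-1}(\mathcal L_{\beta(\sigma^j\omega)+r}-\mathcal L_{\beta(\sigma^j\omega)})$ used in the derivation of \eqref{upto}, but now estimate the second piece using the improved bound above: the term of size $C_{\alpha,F}r^{-1}j^{-1/\alpha+1}$ in \eqref{upto} is replaced by $C_{\alpha,\underline\alpha,F}\,r^{-1}n^{\,1-(1+\eta)/\alpha}$, giving
\begin{equation*}
|I_j|\le C_{\alpha,\underline\alpha,F}\bigl(r+r^{-1}n^{\,1-(1+\eta)/\alpha}\bigr),\qquad j\ge\lfloor n/2\rfloor.
\end{equation*}
Balancing the two contributions yields the optimal choice $r=n^{-(1+\eta-\alpha)/(2\alpha)}$, and hence $|I_j|\le C_{\alpha,\underline\alpha,F}\,n^{-(1+\eta-\alpha)/(2\alpha)}$. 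Summing over the $O(n)$ indices $j\ge\lfloor n/2\rfloor$ produces the bound $C_{\alpha,\underline\alpha,F}\,n^{\,1-(1+\eta-\alpha)/(2\alpha)}$, which combined with the contribution from $D_{n,2}^1(\varepsilon)$ gives precisely \eqref{eq:summability2-2}.

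The only subtle point — and the main place where one must be careful — is ensuring that \eqref{eq:fast_ml_2} is genuinely applicable to $f_{\omega,0}h_\omega$ uniformly in $\omega$, with constants depending only on $\alpha$, $\underline\alpha$ and $F$. This hinges on \eqref{observ2} (so that $\|F_\omega\|_{C^1}$ is essentially bounded) and on the uniform membership $h_\omega\in\mathcal C_*\cap\mathcal C_2$ with parameters depending only on $\alpha$. Beyond this verification, the rest of the argument is a direct transcription of the proof of Lemma~\ref{summability2} with the single quantitative upgrade in the $j\ge\lfloor n/2\rfloor$ regime.
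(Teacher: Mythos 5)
Your proposal is correct and is essentially identical to the paper's proof: the paper likewise reuses the argument of Lemma~\ref{summability2} up to~\eqref{upto}, replaces the $L^1$ memory-loss bound on $\mathcal L_\omega^j(f_{\omega,0}h_\omega)$ by the sharper $n^{1-(1+\eta)/\alpha}$ estimate from~\eqref{eq:fast_ml_2} (applicable since $f_{\omega,0}=F_\omega$ under~\eqref{eq:special_F}), and then rebalances with $r=n^{-(1+\eta-\alpha)/(2\alpha)}$. The only differences are cosmetic — you spell out the $D_{n,2}^1$ and easy-$j$ contributions, which the paper leaves implicit in its ``proceeds as'' remark.
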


\begin{proof}
	The proof proceeds as the proof of Lemma~\ref{summability2} up to~\eqref{upto}. It follows from \eqref{eq:fast_ml_2} that, for $\bP$-a.e. $\omega \in \Omega$,\
	$$ 
	\|\mathcal L_\omega^j (f_{\omega, 0}h_\omega)\|_{L^1(m)} \le   C_{\alpha, \underline{\alpha}, F} n^{ 1 - (1 + \eta)/\alpha}, \quad  j\ge \lfloor n/2\rfloor.
	$$
	Thus, choosing $r = n^{ \frac12 (  1 - \frac{1 + \eta }{\alpha} )  }$, we obtain 
	$$
	|I_j| \le C_{\alpha, \underline{\alpha}, F}(r+r^{-1}n^{ 1 - (1 + \eta)/\alpha }) \le 
	C_{\alpha, \underline{\alpha}, F} n^{ - \frac{1 + \eta - \alpha }{ 2 \alpha } }
	, \quad  j\ge \lfloor n/2\rfloor,
	$$	
	which yields \eqref{eq:summability2-2}.
	
\end{proof}

\begin{theorem}\label{thm:special}
	Suppose that~\eqref{eq:special_F} holds with $0\le \eta \le \alpha$ and $C>0$. Provided that $\alpha<\frac{1+\eta}{5}$, we have that $\varepsilon \mapsto \Sigma_\varepsilon^2$ is differentiable in $\varepsilon=0$.
\end{theorem}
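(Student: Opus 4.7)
The proof would follow the scaffolding of Theorem~\ref{diffvar} verbatim, with Lemma~\ref{improvedsum} substituted for Lemma~\ref{summability2}. The plan is to sketch the reductions and highlight the single place where the improved parameter range enters.

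First I would observe that the hypothesis $\alpha<(1+\eta)/5$, combined with $\eta\le\alpha$, forces $\alpha<1/4$: indeed $\alpha<(1+\eta)/5\le(1+\alpha)/5$ yields $4\alpha<1$. This matters because the auxiliary $\gamma\in(0,1/2)$ must satisfy both $\gamma(1/\alpha-2)>1$ and $1-2\alpha>\gamma$, and such a $\gamma$ exists precisely when $\alpha<1/4$. Consequently, the truncation estimates \eqref{w00} and \eqref{w11}, as well as the dominating bound on $\bar D_n^2(\ve)$ provided by Lemma~\ref{summability1}, all remain valid as stated, since their proofs depend only on $\alpha<1/4$ and never use the decay~\eqref{eq:special_F}.

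Next I would decompose, as in \eqref{w22}, $D_n(\ve)=D_n^1(\ve)+D_n^2(\ve)$, and verify the two conditions listed at the end of Section~4.2: (i) for each fixed $n$, the limits $\lim_{\ve\to 0}\bar D_n^2(\ve)$ and $\lim_{\ve\to 0}D_n^1(\ve)$ exist; and (ii) there is a summable sequence $u_n$ with $|\bar D_n^2(\ve)|,|D_n^1(\ve)|\le u_n$ for all $n<|\ve|^{-\gamma}$ and all sufficiently small $\ve$. The pointwise limits are given by Lemmas~\ref{limitexists1} and~\ref{limitexists2}, whose proofs are unaffected by the change of hypothesis. The dominating bound on $\bar D_n^2(\ve)$ is supplied, as before, by Lemma~\ref{summability1}, which is summable under $\alpha<1/4$. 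The only place where the improvement enters is the dominating bound on $D_n^1(\ve)$: I would invoke Lemma~\ref{improvedsum} in place of Lemma~\ref{summability2} to obtain
\[
|D_n^1(\ve)|\le C_{\alpha,\underline\alpha,F}\bigl(n^{1-1/\gamma}+n^{1-(1+\eta-\alpha)/(2\alpha)}\bigr),\qquad n<|\ve|^{-\gamma}.
\]
The first summand is summable because $\gamma<1/2$, and the second is summable precisely when $(1+\eta-\alpha)/(2\alpha)>2$, i.e.\ when $\alpha<(1+\eta)/5$, which matches the hypothesis.

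The main obstacle I foresee is bookkeeping rather than a new idea: one must verify that every estimate in the proof of Theorem~\ref{diffvar} that was stated under $\alpha<1/5$ in fact only required $\alpha<1/4$, and that the compatibility conditions on $\gamma$ continue to hold in this broader range. Once these routine checks are made, combining the summable dominating sequence with the pointwise limits from Lemmas~\ref{limitexists1} and~\ref{limitexists2} yields, via the dominated-convergence style argument already in the proof of Theorem~\ref{diffvar}, the differentiability of $\ve\mapsto\Sigma_\ve^2$ at $\ve=0$.
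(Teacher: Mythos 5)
Your proof is correct and follows exactly the paper's own argument: run the machinery of Theorem~\ref{diffvar} verbatim, substituting Lemma~\ref{improvedsum} for Lemma~\ref{summability2}, and note that the new bound on $D_n^1(\ve)$ is summable precisely when $\alpha<(1+\eta)/5$. Your additional check that $\eta\le\alpha$ forces $\alpha<1/4$ (so the choice of $\gamma$ and the bound on $\bar D_n^2$ go through unchanged) is the same observation the paper makes in the sentence preceding Lemma~\ref{improvedsum}.
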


\begin{proof}
	We proceed exactly as in the proof of Theorem~\ref{diffvar} with the only difference being that we use Lemma~\ref{improvedsum} instead of Lemma~\ref{summability2}. The conclusion follows since the upper bound~\eqref{eq:summability2-2} is summable if \(\alpha < \frac{1 + \eta}{5}\).
\end{proof}

\section*{Acknowledgements}
This paper has been funded by European Union – NextGenerationEU-Statistical properties of random dynamical systems and other contributions to mathematical analysis and probability theory (D. Dragi\v cevi\' c).  D.D  is grateful to Yeor Hafouta for the ideas in~\cite{DH2} used in the present paper.
J.L is supported by JSPS under the project LEADER. He would like to thank the University of Rijeka for hospitality over the course of this work.

\bigskip

\noindent\textbf{Competing interests:} the authors declare none.

\section{Appendix A}\label{AppA}

For the reader's convenience, we gather in this section preliminary results from \cite{DGTS} that 
are instrumental in the proofs of Theorems \ref{contvariance} and \ref{diffvar} concerning the regularity of 
the variance in the CLT.

Recall the definitions of the cones $\cC_* = \cC_*(a)$ and $\cC_2 = \cC_2(b_1, b_2)$ from Section \ref{sec:clt}. By 
\cite[Lemma 2 and Proposition 5]{DGTS}, there exist $a = a(\alpha) > 1$ and 
$b_1,b_2 > 0$,  such that for $0 < \beta \le \alpha$, we have
\begin{align}\label{cone_inv}
&\cL_{\beta}( \cC_* ) \subset \cC_*, \quad 
\cL_{\beta}( \cC_2 ) \subset \cC_2, \quad N_\beta( \cC_2 ) \subset \cC_2. 
\end{align}
The following follows from~\cite[Theorem 7]{DGTS}.
 \begin{theorem}\label{DECC}
There exists $C_\alpha>0$ with the property that for $\mathbb P$-a.e. $\omega \in \Omega$,  $n\in \mathbb N$, and $\phi, \psi \in \mathcal C_*$ such that $\int_0^1\phi\, dm=\int_0^1\psi\, dm$, we have
\begin{equation}\label{dec}
\int_0^1 | \mathcal L_\omega^n (\phi- \psi)|\, dm\le C_\alpha (\|\phi \|_{L^1(m)}+\|\psi \|_{L^1(m)})n^{-1/\alpha+1}.
\end{equation}
 \end{theorem}

Furthermore, by \cite[Lemma 12]{DGTS}, we have the following result.

\begin{lemma}\label{LEM12}
For $0<\gamma_0 \le \alpha<1$, there exist $0<\delta \le \alpha$  and $C_1, C_2>0$ such that 
\begin{equation}\label{lemm}
|(X_{\gamma}  N_{\gamma}(\phi))'(x)| \le C_1ax^{-\delta} \quad \text{and} \quad 
|(X_{\gamma}  N_{\gamma}(\phi))''(x)| \le C_2a x^{-\delta -1},
\end{equation}
for $x\in (0, 1]$, $\gamma \in [\gamma_0, \alpha]$ and $\phi \in \mathcal C_*(a)\cap \mathcal C_2(b_1, b_2)$ with $m(\phi)=1$. Moreover, there exist $\psi_i \in \mathcal C_*(a)\cap C^1(0, 1]$, $i\in \{1, 2\}$ such that 
\begin{equation}\label{psi12}
(X_\gamma N_\gamma (\phi))'=\psi_1-\psi_2,
\end{equation}
with $\int_0^1 \psi_1 \, dm = \int_0^1 \psi_2\, dm$
and $\| \psi_i\|_{L^1(m)}\le D$, $i \in \{1, 2\}$ for some $D= D_{\gamma_0, \alpha}>0$.
\end{lemma}

The following result on statistical stability is a consequence of \cite[Theorem 13]{DGTS}.

	\begin{proposition}\label{statstab}
	Let $h_{\omega, \ve}$ be defined as in Section \ref{sec:varcont} with $h_{\omega, 0} = h_\omega$. 
	There exists $C_\alpha>0$ such that for $\mathbb P$-a-e. $\omega \in \Omega$,
	\[
	\|h_{\omega, \varepsilon}-h_\omega \|_{L^1(m)} \le C_\alpha |\varepsilon|, \quad \varepsilon \in (-\varepsilon_0, \varepsilon_0).
	\]
\end{proposition}

\section{Appendix B}
\begin{lemma}\label{auxiliary}
	Let $\varphi \in C^2$ and $h\in \mathcal C_*\cap \mathcal C_2$ with $\int_0^1 h\, dm=1$. Then, there exist $\psi_i\in \mathcal C_*\cap \mathcal C_2$, $i=1, 2$ such that 
	\begin{equation}\label{difference}
		\varphi h=\psi_1-\psi_2.
	\end{equation}
	Moreover, $\|\psi_i\|_{L^1(m)}\le D\|\varphi\|_{C^2}$ for $i=1, 2$,  where $D$ depends only on $a$, $\alpha, b_1$ and $b_2$ (parameters of cones). 
\end{lemma}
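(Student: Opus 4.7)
My plan is to use the standard Liverani-type trick: I write $\varphi h = \psi_1 - \psi_2$ with
\[
\psi_1 := \varphi h + \lambda H, \qquad \psi_2 := \lambda H,
\]
where $H(x) := x^{-\alpha}$ is a fixed reference function and $\lambda = D_0 \|\varphi\|_{C^2}$ for a constant $D_0 = D_0(a, \alpha, b_1, b_2)$ to be chosen large enough at the end. That $H \in \mathcal C_* \cap \mathcal C_2$ is a direct verification: $H$ is positive and decreasing, $X^{\alpha+1} H = x$ is increasing, $\int_0^x H = x^{1-\alpha} \int_0^1 H$ (so the $\mathcal C_*$ integral condition holds with constant $1 \le a$), and the ratios $|H'|/H = \alpha/x$ and $|H''|/H = \alpha(\alpha+1)/x^2$ meet the $\mathcal C_2$ bounds because the cone parameters fixed in \cite[Section 2.2]{DGTS} satisfy $b_1 \ge \alpha$ and $b_2 \ge \alpha(\alpha+1)$. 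Consequently $\psi_2 \in \mathcal C_* \cap \mathcal C_2$ immediately, with $\|\psi_2\|_{L^1(m)} = \lambda/(1-\alpha)$.

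The heart of the proof is verifying $\psi_1 \in \mathcal C_* \cap \mathcal C_2$. The key inputs are pointwise bounds on $h$ obtained from cone membership and the normalization $\int_0^1 h\,dm = 1$: since $h$ is decreasing with unit mass and satisfies the $\mathcal C_*$ integral inequality, one obtains $h(x) \le a x^{-\alpha}$ (via $x h(x) \le \int_0^x h \le a x^{1-\alpha}$), and the $\mathcal C_2$ bounds then yield $|h'(x)| \le a b_1\, x^{-\alpha-1}$ and $|h''(x)| \le a b_2\, x^{-\alpha-2}$. Using these bounds, each of the five defining properties of $\mathcal C_* \cap \mathcal C_2$ for $\psi_1$ reduces, after grouping terms by powers of $x$, to an inequality of the form $c_0 \lambda \ge c_1(a, \alpha, b_1, b_2)\|\varphi\|_{C^2}$. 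The relevant structural constants $c_0$ are $\alpha$ (for ``$\psi_1$ decreasing''), $a - 1$ (for the integral cone inequality $\int_0^x \psi_1 \le a x^{1-\alpha}\int_0^1 \psi_1$, where $\lambda/(1-\alpha) \ge 2a\|\varphi\|_{C^0}/(a-1)$ suffices), $b_1 - \alpha$ (for $|\psi_1'| \le b_1 \psi_1 / x$), and $b_2 - \alpha(\alpha+1)$ (for $|\psi_1''| \le b_2 \psi_1 / x^2$). The $X^{\alpha+1}\psi_1$-monotonicity is handled by the identity
\[
(X^{\alpha+1}\psi_1)' = \varphi\,(X^{\alpha+1} h)' + x^{\alpha+1}\varphi'\, h + \lambda,
\]
where $0 \le (X^{\alpha+1} h)' \le (\alpha+1)a$ and $|x^{\alpha+1}\varphi' h| \le a \|\varphi\|_{C^1}$, so the first two terms are bounded below by $-C(a,\alpha)\|\varphi\|_{C^1}$. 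Taking $D_0$ to exceed the maximum of the finitely many lower bounds yields $\psi_1 \in \mathcal C_* \cap \mathcal C_2$, and the $L^1$ estimate $\|\psi_1\|_{L^1(m)} \le \|\varphi\|_{C^0} + \lambda/(1-\alpha) \le D\|\varphi\|_{C^2}$ follows.

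The main obstacle is not any individual inequality but a bookkeeping one: I need to select a single $\lambda$ that simultaneously satisfies all five lower bounds, and these blow up if any of $a-1$, $b_1-\alpha$, $b_2-\alpha(\alpha+1)$ vanishes. Thus the argument requires the strict inequalities $a > 1$, $b_1 > \alpha$, $b_2 > \alpha(\alpha+1)$ to hold for the cone parameters. These strict inequalities are standard for LSV-type cones and are built into the construction used in \cite[Section 2.2]{DGTS}, so no genuine analytic difficulty arises beyond careful accounting.
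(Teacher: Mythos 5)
Your decomposition $\psi_1=\varphi h+\lambda x^{-\alpha}$, $\psi_2=\lambda x^{-\alpha}$ is a Liverani-type trick, but it is genuinely different from the one in the paper, which (following \cite{NPT}) takes $\psi_1=(\varphi+\lambda x+A)h+B$ and $\psi_2=(\lambda x+A)h+B$ with $\lambda=-2\|\varphi\|_{C^1}$, $A=3\|\varphi\|_{C^1}$, and $B=d\|\varphi\|_{C^2}$ a constant. The choice matters for the $\mathcal C_2$ conditions. Your reference function $H(x)=x^{-\alpha}$ saturates the cone inequalities (equality $|H'|/H=\alpha/x$, $|H''|/H=\alpha(\alpha+1)/x^2$), so $\lambda H$ contributes no slack in the $\mathcal C_2$ bounds for $\psi_1$; absorbing the "bad'' terms from $\varphi'h$, $\varphi h'$, $\varphi h''$ therefore forces the strict inequalities $b_1>\alpha$ and $b_2>\alpha(\alpha+1)$, which you correctly flag but do not verify from \cite[Section 2.2]{DGTS}. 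The paper's construction sidesteps this: the additive constant $B$ has $B'=B''=0$ while $\frac{b_1}{x}B$ and $\frac{b_2}{x^2}B$ are strictly positive and unbounded as $x\downarrow 0$, so $B$ supplies all the slack needed to dominate the error terms regardless of whether any strict gap exists between $b_1$ (resp.\ $b_2$) and the minimal admissible values; the positivity and $\mathcal C_*$ integral condition are then handled by the linear part $\lambda x+A$ multiplying $h$. Your approach is cleaner to state and does work if the cones in \cite{DGTS} indeed carry the required slack (which is likely but is an extra hypothesis the paper never invokes); the paper's version is more robust because it works under the cone hypotheses already stated, namely $a>1$ plus the defining $\mathcal C_2$ inequalities, with no implicit lower bounds on $b_1,b_2$. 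One small slip: your bound $(X^{\alpha+1}h)'\le(\alpha+1)a$ should read $(X^{\alpha+1}h)'=(\alpha+1)x^{\alpha}h+x^{\alpha+1}h'\le(\alpha+1)a+ab_1$, using $|h'|\le b_1h/x$; this does not affect the conclusion.
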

\begin{proof}
	We recall the construction from~\cite{NPT}. Namely,
	take $\psi_1=(\varphi+\lambda x+A)h+B$ and $\psi_2=(\lambda x+A)h+B$. Observe that~\eqref{difference} holds. Moreover, it is proved in~\cite{NPT} that $\psi_1\in \mathcal C_*$ provided that:
	\begin{itemize}
		\item $\lambda=-2\|\varphi\|_{C^1}$ (so that $|\lambda|>\|\varphi'\|_{\infty}$ and $\lambda<0$);
		\item $A=3\|\varphi\|_{C^1}$ so that $\varphi+\lambda x+A\ge 0$;
		\item $B\ge \frac{a}{\alpha+1}\|\varphi\|_{C^1}$ so that 
		\[
		(\alpha+1)x^\alpha B\ge (\alpha+2)\lambda x^{\alpha+1}h(x)+x^{\alpha+1}h(x)\varphi'(x),
		\]
		where we also recall that $\lambda<0$;
		\item $B\ge \frac{4a}{a-1}\|\varphi\|_{C^1}$, so that 
		\[
		B\ge \frac{a}{a-1}[\sup(\varphi +\lambda x+A)]-\inf (\varphi+\lambda x+A)]
		\]
	\end{itemize}
	
	Note that 
	\[
	\psi_1'(x)=(\varphi'(x)+\lambda)h(x)+(\varphi(x)+\lambda x+A)h'(x).
	\]
	Consequently, 
	\[
	\begin{split}
		|\psi_1'(x)| &\le (\|\varphi'\|_\infty+|\lambda|)h(x)+ (\varphi(x)+\lambda x+A)|h'(x)| \\
		&\le a(\|\varphi'\|_\infty+|\lambda|)x^{-\alpha}+\frac{b_1}{x}(\varphi(x)+\lambda x+A)h(x) \\
		&\le a(\|\varphi'\|_\infty+|\lambda|)x^{-1}+\frac{b_1}{x}(\varphi(x)+\lambda x+A)h(x)\\
		&\le B \frac{b_1}{x}+\frac{b_1}{x}(\varphi(x)+\lambda x+A)h(x)\\
		&=\frac{b_1}{x}\psi_1(x),
	\end{split}
	\]
	provided that $B\ge \frac{3a}{b_1}\|\varphi\|_{C^1}$.
	
	Next, note that 
	\[
	\psi_1''(x)=\varphi''(x)h(x)+2(\varphi'(x)+\lambda)h'(x)+(\varphi(x)+\lambda x+A)h''(x).
	\]
	Thus, 
	\[
	\begin{split}
		|\psi_1''(x)| &\le \|\varphi''\|_\infty h(x)+2\frac{b_1}{x}(\|\varphi'\|_\infty+|\lambda|)h(x)+\frac{b_2}{x^2}(\varphi(x)+\lambda x+A)h(x) \\
		&\le a\|\varphi''\|_\infty x^{-2}+2ab_1(\|\varphi'\|_\infty+|\lambda|)x^{-2}+\frac{b_2}{x^2}(\varphi(x)+\lambda x+A)h(x)\\
		&\le B\frac{b_2}{x^2}+\frac{b_2}{x^2}(\varphi(x)+\lambda x+A)h(x)\\
		&=\frac{b_2}{x^2}\psi_1(x),
	\end{split}
	\]
	provided that $B\ge \frac{a}{b_2}\|\varphi''\|_\infty+\frac{6ab_1}{b_2}\|\varphi\|_{C^1}$. 
	Taking into account, various conditions for $B$,  we observe that we can choose $B$ of the form $B=d\|\varphi\|_{C^2}$, where constant $d>0$ depends only on $\alpha, a, b_1$ and $b_2$ to fulfill these. With this choice we have that 
	$\psi_1\in \mathcal C_2$.
	Finally, we observe that 
	\[
	\|\psi_1\|_{L^1(m)}\le \|\varphi\|_{\infty}+|\lambda|+A+B,
	\]
	which readily implies the desired conclusion.
\end{proof}

\begin{lemma}\label{auxiliarly2}
	Let $0<\underline{\alpha}\le \alpha <1$. Then, there exist $\delta=\delta(\underline{\alpha}, \alpha)\in (0, \alpha]$ such that
	\[
	|\partial_\gamma^2 \mathcal L_\gamma(\phi)(x)| \le C_{\alpha, \underline{\alpha}}x^{-\delta} \|\phi\|_{L^1(m)},
	\]
	for any $x\in (0, 1]$, $\gamma  \in [\underline{\alpha}, \alpha]$ and
	$\phi \in \mathcal C_*\cap \mathcal C_2$. In particular, 
	\[
	\|\partial_\gamma^2 \mathcal L_\gamma(\phi)\|_{L^1(m)}\le C_{\alpha, \underline{\alpha}}\|\phi\|_{L^1(m)}.
	\]
\end{lemma}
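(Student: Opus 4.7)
The plan is to differentiate the identity $\partial_\gamma \mathcal L_\gamma(\phi) = -(X_\gamma N_\gamma(\phi))'$ (used earlier in the paper, cf.\ \cite[p.~865]{BT}) once more in $\gamma$, expand all resulting terms by the product rule, and control each factor using (a) the explicit formulas $X_\gamma = 2^\gamma g_\gamma^{1+\gamma}\log(2g_\gamma)$ and $N_\gamma\phi = g_\gamma'\,\phi\circ g_\gamma$, and (b) the pointwise cone estimates $\phi(x)\le a\,x^{-\alpha}\|\phi\|_{L^1(m)}$, $|\phi'(x)|\le (b_1/x)\phi(x)$ and $|\phi''(x)|\le (b_2/x^2)\phi(x)$ available for $\phi\in\mathcal C_*\cap\mathcal C_2$.

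First, I would write
\[
\partial_\gamma^2 \mathcal L_\gamma(\phi) = -\bigl(\partial_\gamma(X_\gamma N_\gamma \phi)\bigr)'= -\bigl((\partial_\gamma X_\gamma)\,N_\gamma\phi + X_\gamma\,\partial_\gamma N_\gamma \phi\bigr)',
\]
and further decompose
\[
\partial_\gamma N_\gamma \phi = (\partial_\gamma g_\gamma')\,\phi\circ g_\gamma + g_\gamma'\,(\phi'\circ g_\gamma)\,\partial_\gamma g_\gamma.
\]
Carrying out the outer derivative $(\cdot)'$ in $x$ produces a finite list of terms, each a product of the form
\[
(\text{a derivative of }X_\gamma \text{ in }x,\gamma) \cdot (\text{a derivative of }g_\gamma'\text{ in }x,\gamma)\cdot (\phi^{(k)}\circ g_\gamma)\cdot (\partial_\gamma g_\gamma \text{ or }1),
\]
with $k\in\{0,1,2\}$. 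The key step is to record the \emph{singular order in $x$} of each of these factors on $\gamma\in[\underline\alpha,\alpha]$. For the factors depending only on $\gamma$-maps I would rely on the explicit estimates for $X_\gamma,X_\gamma'$ in \cite[(2.3)--(2.4)]{BT} and the standard bounds on $g_\gamma(x),g_\gamma'(x),g_\gamma''(x)$, together with the analogous bounds for their $\gamma$-derivatives (which pick up an extra $\log$ factor). For the $\phi$ factors I would use the cone bounds recalled above combined with $g_\gamma(x)\asymp x$ near $0$ to translate $\phi^{(k)}\circ g_\gamma$ into $x^{-\alpha-k}\|\phi\|_{L^1}$.

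Putting the worst term together should give a bound
\[
|\partial_\gamma^2 \mathcal L_\gamma(\phi)(x)|\le C_{\alpha,\underline\alpha}\,x^{-\delta}\,\|\phi\|_{L^1(m)}
\]
with $\delta = \alpha + \varepsilon$ coming from the $x^{-\alpha}$ bound on $\phi$ together with $x$-derivatives of $X_\gamma$; a careful accounting should show one can absorb an $x^\eta$ into logarithms and the fact that the ``hyperbolic'' factor $g_\gamma^{1+\gamma}$ in $X_\gamma$ contributes a positive power of $x$ which compensates much of the differentiation. Taking $\delta=\alpha$ (allowing any $\delta\in(0,\alpha]$ suffices) then yields the pointwise bound, and a single integration gives $\|\partial_\gamma^2 \mathcal L_\gamma(\phi)\|_{L^1(m)}\le C_{\alpha,\underline\alpha}\|\phi\|_{L^1(m)}$ since $\int_0^1 x^{-\alpha}\,dx<\infty$.

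The main obstacle I anticipate is the bookkeeping in step two: one must show that the singularities generated by $\partial_\gamma g_\gamma$ and $\partial_\gamma^2$-type derivatives (which introduce factors like $\log(g_\gamma)^2$ and additional powers of $g_\gamma^{-1}$ through the chain rule) do not push the total singular order beyond $x^{-\alpha}$. The compensation comes from the factor $X_\gamma$, which vanishes like $x^{1+\gamma}\log x$, and from the analogous vanishing of $\partial_\gamma X_\gamma$. Once the order-of-magnitude accounting is in place, the rest is a routine application of the cone inequalities and the fact that the bound on $\underline\alpha$ ensures uniform control of all $\gamma$-dependent constants.
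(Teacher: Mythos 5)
Your plan --- re-differentiating $\partial_\gamma \mathcal L_\gamma\phi = -(X_\gamma N_\gamma\phi)'$ in $\gamma$, expanding by the product and chain rules, and estimating each factor via the asymptotics of $g_\gamma$, $X_\gamma$, $\partial_\gamma g_\gamma$ near $0$ together with the $\mathcal C_*\cap\mathcal C_2$ cone inequalities --- is precisely the computation carried out in \cite[Lemma 16]{DGTS}; the paper's proof is nothing more than a pointer to that argument, supplemented by the observation that the $\mathcal C_3$ hypothesis appearing in the statement of \cite[Lemma 16]{DGTS} is not actually used for the relevant estimate. Your bookkeeping already exhibits why: since $\partial_\gamma^2\mathcal L_\gamma\phi = -\bigl(\partial_\gamma(X_\gamma N_\gamma\phi)\bigr)'$ involves two $\gamma$-derivatives taken \emph{before} the single $x$-derivative, the chain rule produces at most $\phi''\circ g_\gamma$, so $\mathcal C_2$ suffices. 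One slip to correct: the worst term is not $x^{-(\alpha+\varepsilon)}$ to be absorbed afterwards; the most singular contribution is of the form $X_\gamma\,\partial_\gamma g_\gamma\,(g_\gamma')^2\,\phi''(g_\gamma)\lesssim x^{2\gamma-\alpha}(\log x)^2\le x^{2\underline{\alpha}-\alpha}(\log x)^2$, whose exponent already exceeds $-\alpha$ for all $\gamma\in[\underline{\alpha},\alpha]$ because $\underline{\alpha}>0$, so a $\delta\in(0,\alpha]$ is obtained directly without any cancellation trick.
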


\begin{proof}
	The desired conclusion is obtained in the proof of~\cite[Lemma 16]{DGTS}, that is, in the arguments that yield the first estimate in~\cite[(27)]{DGTS}. We note that these arguments do not use the assumption that $\phi$ belongs to the cone $\mathcal C_3$ made in the statement of~\cite[Lemma 16]{DGTS}.
\end{proof}

\begin{lemma}\label{auxiliarly3}
	Let $0<\underline{\alpha}\le \alpha <1$. Then, there exist $\delta=\delta(\underline{\alpha}, \alpha)\in (0, \alpha]$ such that 
	\[
	|\partial_\gamma \mathcal L_\gamma (\phi) (x)|\le C_{\alpha, \underline{\alpha}}x^{-\delta}\|\phi\|_{L^1(m)},
	\]
	for $x\in (0, 1]$, $\phi \in C^1(0, 1]\cap \mathcal C_*$ and $\gamma \in [\underline{\alpha}, \alpha]$. In particular, 
	\[
	\|\partial_\gamma \mathcal L_\gamma (\phi)\|_{L^1(m)}\le C_{\alpha, \underline{\alpha}}\|\phi\|_{L^1(m)}.
	\]
\end{lemma}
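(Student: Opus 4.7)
The plan is to exploit the identity $\partial_\gamma \mathcal L_\gamma(\phi) = -(X_\gamma N_\gamma(\phi))'$ and estimate the right-hand side pointwise in $x$, using cone-based inputs for $\phi$ together with the quantitative bounds on $X_\gamma$ and $X_\gamma'$ from~\cite[(2.3) and (2.4)]{BT}. Two consequences of $\phi \in \mathcal C_* \cap C^1(0,1]$ will be central: the density-type estimate $\phi(x) \le C_\alpha x^{-\alpha}\|\phi\|_{L^1(m)}$, obtained by combining the monotonicity of $\phi$ with the integral condition in the definition of $\mathcal C_*$ applied on an interval such as $[x/2, x]$; and the cone-derivative bound $|\phi'(x)| \le (\alpha+1) x^{-1}\phi(x) \le C_\alpha x^{-1-\alpha}\|\phi\|_{L^1(m)}$ coming from~\eqref{eq:cone_fun_diff}.

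Expanding $(X_\gamma N_\gamma \phi)'(x)$ via $N_\gamma \phi(x) = g_\gamma'(x)\phi(g_\gamma(x))$ produces three summands featuring the factors
\[
X_\gamma'(x)\, g_\gamma'(x)\, \phi(g_\gamma(x)),\qquad X_\gamma(x)\, g_\gamma''(x)\, \phi(g_\gamma(x)),\qquad X_\gamma(x)\, g_\gamma'(x)^2\, \phi'(g_\gamma(x)).
\]
I would combine the cone estimates on $\phi$ and $\phi'$ above with the cited bounds on $X_\gamma$, $X_\gamma'$ and the standard uniform-in-$\gamma$ estimates for $g_\gamma'$, $g_\gamma''$ and $g_\gamma(x) \asymp x$ near $0$, valid for $\gamma \in [\underline\alpha, \alpha]$. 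The resulting pointwise bound on each of the three summands will take the form $C_{\alpha, \underline\alpha}\, x^{\gamma - \alpha}|\log x|\,\|\phi\|_{L^1(m)}$. Using $\gamma \ge \underline\alpha$ and absorbing the logarithmic factor into a slightly larger power of $x^{-1}$ yields the desired pointwise bound with $\delta$ chosen in $(\alpha - \underline\alpha,\alpha]$ (or, if $\underline\alpha = \alpha$, with any $\delta \in (0,\alpha]$). The $L^1$ statement then follows immediately from $\int_0^1 x^{-\delta}\,dx < \infty$, which holds since $\delta \le \alpha < 1$.

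The main obstacle will be the exponent bookkeeping in the third summand, where $\phi'(g_\gamma(x))$ contributes the worst singularity $x^{-1-\alpha}$; this must be exactly offset by the $x^{1+\gamma}$ factor inherited from $X_\gamma$, leaving the leftover power $x^{\gamma - \alpha}$ whose integrability is ensured only by $\gamma \ge \underline\alpha$, which explains the dependence of the constant on $\underline\alpha$. Structurally the argument parallels the proof of~\cite[Lemma 16]{DGTS} that is recycled in Lemma~\ref{auxiliarly2}, but it should be lighter since only one $\gamma$-derivative of $\mathcal L_\gamma$ is taken, and I do not anticipate difficulties beyond those already addressed in that earlier computation.
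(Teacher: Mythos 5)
Your proposal is correct and reaches the stated bound by a route that is morally the same as the paper's but differs in one step. The paper follows the proof of \cite[Lemma 12]{DGTS}, and the only delicate term is $(N_\gamma\phi)'$: DGTS handled it assuming $\phi\in\mathcal C_2$, while here one only has $\phi\in\mathcal C_*\cap C^1(0,1]$. The paper's fix is to invoke the cone invariance $N_\gamma(\mathcal C_*\cap C^1(0,1])\subset \mathcal C_*\cap C^1(0,1]$ (\cite[Proposition 5]{DGTS}) and then apply the cone-derivative estimate \eqref{eq:cone_fun_diff} directly to $N_\gamma\phi$, giving $|(N_\gamma\phi)'(x)|\le(\alpha+1)x^{-1}N_\gamma\phi(x)\le C_\alpha x^{-1-\alpha}$. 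You instead expand $(N_\gamma\phi)'=g_\gamma''\,\phi\circ g_\gamma+(g_\gamma')^2\,\phi'\circ g_\gamma$ by the chain rule, apply the density bound $\phi(y)\le C_\alpha y^{-\alpha}\|\phi\|_{L^1}$ (from monotonicity plus the integral condition in the definition of $\mathcal C_*$) and the cone-derivative bound \eqref{eq:cone_fun_diff} to $\phi$ itself, and absorb the pieces using $g_\gamma(x)\asymp x$, $0\le g_\gamma'\le 1$, and $|g_\gamma''(x)|\lesssim x^{\gamma-1}$. In effect you are inlining the content of the cone-invariance lemma rather than invoking it as a black box. Both routes lead to the same pointwise bound $C_{\alpha,\underline\alpha}\,x^{\gamma-\alpha}|\log x|\,\|\phi\|_{L^1(m)}$ and the same choice of $\delta\in(\alpha-\underline\alpha,\alpha]$ after absorbing the logarithm; the paper's version is shorter because it reuses an already-established cone fact and avoids the explicit $g_\gamma''$ bookkeeping, while yours is more self-contained. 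One small slip: the structural precedent is \cite[Lemma 12]{DGTS} (a single $\gamma$-derivative), not Lemma 16 (which is for $\partial_\gamma^2$ and is the template for Lemma~\ref{auxiliarly2}).
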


\begin{proof}
	The proof proceeds as the proof of~\cite[Lemma 12]{DGTS} for verification of the first estimate in~\cite[(15)]{DGTS}. The only difference concerns the term $(N_\gamma (\phi))'$, which in~\cite{DGTS} was handled by relying on the assumption that $\phi \in \mathcal C_2$.
	
	In the present context, assuming without loss of generality that $\int_0^1\phi \, dm=1$, we have $N_\gamma (\phi)\in \mathcal C_*$ (since $\phi \in \mathcal C_*$). Moreover, $N_\gamma (\phi)\in C^1(0, 1]$ as $\phi \in C^1(0, 1]$. Consequently, \cite[Remark 1]{DGTS} implies that 
	\[
	|(N_\gamma (\phi))'(x)|\le \frac{\alpha+1}{x}N_\gamma (\phi)(x)\le C_\alpha x^{-1-\alpha},
	\]
	which is analogous to~\cite[(18)]{DGTS}. 
\end{proof}

\section{Appendix C}\label{AppB}
Here we provide the proof of Theorem~\ref{sharpl}. We stress that we closely follow the proof of~\cite[Theorem 14]{DGTS}.

\begin{proof}[Proof of Theorem~\ref{sharpl}]
	Using Theorem~\ref{DECC} and Lemma~\ref{LEM12}, it can easily be shown that the series in~\eqref{series} converges in $L^1(m)$ for $\mathbb P$-a.e. $\omega \in \Omega$ and that~\eqref{series2} holds.
	
	For $\mathbb P$-a.e. $\omega \in \Omega$ and each $\varepsilon$, we have 
	\[
	\begin{split}
		h_{\omega, \varepsilon}-h_\omega &=\mathcal L_{\sigma^{-l}\omega, \varepsilon}^l h_{\sigma^{-l}\omega, \varepsilon}-\mathcal L_{\sigma^{-l}\omega}^l h_{\sigma^{-l}\omega} \\
		&=\mathcal L_{\sigma^{-l}\omega, \varepsilon}^l 1-\mathcal L_{\sigma^{-l}\omega}^l 1 +\mathcal L_{\sigma^{-l}\omega, \varepsilon}^l(h_{\sigma^{-l}\omega, \varepsilon}-1)+\mathcal L_{\sigma^{-l}\omega}^l(1-h_{\sigma^{-l}\omega}),
	\end{split}
	\]
	for $l\in \mathbb N$. Hence, Theorem~\ref{DECC} implies that 
	\[
	\| h_{\omega, \varepsilon}-h_\omega -(\mathcal L_{\sigma^{-l}\omega, \varepsilon}^l1-\mathcal L_{\sigma^{-l}\omega}^l 1)\|_{L^1(m)} \le C_\alpha l^{1-1/\alpha}.
	\] From the above it follows that there exists $l_0=l_0(\varepsilon)\in \mathbb N$ such that for $l\ge l_0$,
	\begin{equation}\label{ap1}
		\left \| \frac{h_{\omega, \varepsilon}-h_\omega-\varepsilon\hat h_\omega }{\varepsilon}- \frac{\mathcal L_{\sigma^{-l}\omega, \varepsilon}^l1-\mathcal L_{\sigma^{-l}\omega}^l 1-\varepsilon \hat h_\omega}{\varepsilon}\right \|_{L^1(m)} \le C_\alpha |\varepsilon|.
	\end{equation}
	On the other hand,
	\[
	\mathcal L_{\sigma^{-l}\omega, \varepsilon}^l 1- \mathcal L_{\sigma^{-l}\omega}^l1=\sum_{j=0}^{l-1} \mathcal L_{\sigma^{-j}\omega, \varepsilon}^j (\mathcal L_{\sigma^{-(j+1)}\omega, \varepsilon}-\mathcal L_{\sigma^{-(j+1)}\omega})\mathcal L_{\sigma^{-l}\omega}^{l-1-j}(1).
	\]
	Writing 
	\[
	\begin{split}
		&(\mathcal L_{\sigma^{-(j+1)}\omega, \varepsilon}-\mathcal L_{\sigma^{-(j+1)}\omega})(\varphi)(x)\\
		&=\varepsilon \delta(\sigma^{-(j+1)}\omega)\partial_\gamma \mathcal L_{\beta(\sigma^{-(j+1)}\omega)} (\varphi)(x)\\
		&\phantom{=}+\varepsilon^2 (\delta(\sigma^{-(j+1)}\omega))^2\int_0^1 (1-t)\partial_\gamma^2 \mathcal L_{\beta(\sigma^{-(j+1)}\omega)+t\epsilon \delta(\sigma^{-(j+1)}\omega)}(\varphi)(x)\, dt,
	\end{split}
	\]
	we see that 
	\[
	\begin{split}
		&\frac{\mathcal L_{\sigma^{-l}\omega, \varepsilon}^l1-\mathcal L_{\sigma^{-l}\omega}^l 1-\varepsilon \hat h_\omega}{\varepsilon}\\
		&=\sum_{j=0}^{l-1}\delta (\sigma^{-(j+1)}\omega)\mathcal L_{\sigma^{-j}\omega, \varepsilon}^{j}\partial_\gamma \mathcal L_{\beta(\sigma^{-(j+1)}\omega)}(\mathcal L_{\sigma^{-l}\omega}^{l-1-j}(1))-\hat h_\omega \\
		&\phantom{=}+\varepsilon \sum_{j=0}^{l-1}(\delta(\sigma^{-(j+1)}\omega))^2 \int_0^1 (1-t)\mathcal L_{\sigma^{-j}\omega, \varepsilon}^j \partial_\gamma^2 \mathcal L_{\beta(\sigma^{-(j+1)}\omega)+t\epsilon \delta(\sigma^{-(j+1)}\omega)}(\mathcal L_{\sigma^{-l}\omega}^{l-1-j}(1))\, dt.
	\end{split}
	\]
	By Theorem~\ref{DECC} and~\cite[Lemma 16]{DGTS}, we have that 
	\begin{equation}\label{ap2}
		\begin{split}
			&\left \|\varepsilon \sum_{j=0}^{l-1}(\delta(\sigma^{-(j+1)}\omega))^2 \int_0^1 (1-t)\mathcal L_{\sigma^{-j}\omega, \varepsilon}^j \partial_\gamma^2 \mathcal L_{\beta(\sigma^{-(j+1)}\omega)+t\epsilon \delta(\sigma^{-(j+1)}\omega)}(\mathcal L_{\sigma^{-l}\omega}^{l-1-j}(1))\, dt\right \|_{L^1(m)}  \\
			&\le C_{\alpha, \underline{\alpha}}|\varepsilon|.
		\end{split}
	\end{equation}
	On the other hand, similarly to~\cite[Remark 21]{DGTS} for $n=\lfloor |\varepsilon|^{-\alpha}\rfloor$ and $l>n$,
	\begin{equation}\label{ap3}
		\begin{split}
			&\left \|\sum_{j=0}^{l-1}\delta (\sigma^{-(j+1)}\omega)\mathcal L_{\sigma^{-j}\omega, \varepsilon}^{j}\partial_\gamma \mathcal L_{\beta(\sigma^{-(j+1)}\omega)}(\mathcal L_{\sigma^{-l}\omega}^{l-1-j}(1))-\hat h_\omega\right \|_{L^1(m)} \\
			&\le \left \| \sum_{j=0}^n \delta  (\sigma^{-(j+1)}\omega) (\mathcal L_{\sigma^{-j}\omega, \varepsilon}^j-\mathcal L_{\sigma^{-j}\omega}^j)[(X_{\beta(\sigma^{-(j+1)}\omega)} N_{\beta(\sigma^{-(j+1)}\omega)}(\mathcal L_{\sigma^{-l}\omega}^{l-1-j}(1)))'] \right \|_{L^1(m)}\\
			&\phantom{\le }+
			\bigg \|\sum_{j=0}^n \delta (\sigma^{-(j+1)}\omega)\mathcal L_{\sigma^{-j}\omega}^j ((X_{\beta(\sigma^{-(j+1)}\omega)} N_{\beta(\sigma^{-(j+1)}\omega)}(\mathcal L_{\sigma^{-l}\omega}^{l-1-j}(1)))'- \\
			&\phantom{=}-(X_{\beta(\sigma^{-(j+1)}\omega)} N_{\beta(\sigma^{-(j+1)}\omega)} (h(\sigma^{-(j+1)}\omega)))')\bigg \|_{L^1(m)}+ C_\alpha |\varepsilon|^{1-2\alpha}.
		\end{split}
	\end{equation}
	Moreover, 
	\begin{equation}\label{ap4}
		\begin{split}
			&\left \| \sum_{j=0}^n \delta  (\sigma^{-(j+1)}\omega) (\mathcal L_{\sigma^{-j}\omega, \varepsilon}^j-\mathcal L_{\sigma^{-j}\omega}^j)[(X_{\beta(\sigma^{-(j+1)}\omega)} N_{\beta(\sigma^{-(j+1)}\omega)}(\mathcal L_{\sigma^{-l}\omega}^{l-1-j}(1)))']\right \|_{L^1(m)} \\
			&\le \sum_{j=1}^n \|(\mathcal L_{\sigma^{-j}\omega, \varepsilon}^j-\mathcal L_{\sigma^{-j}\omega}^j)[(X_{\beta(\sigma^{-(j+1)}\omega)} N_{\beta(\sigma^{-(j+1)}\omega)}(\mathcal L_{\sigma^{-l}\omega}^{l-1-j}(1)))']\|_{L^1(m)} \\
			&\le C_{\alpha, \underline{\alpha}}n^2|\varepsilon|\le C_{\alpha, \underline{\alpha}}|\varepsilon|^{1-2\alpha},
		\end{split}
	\end{equation}
	where we used that in the proof of~\cite[Theorem 14]{DGTS} it is showed that 
	\[
	\|(\mathcal L_{\sigma^{-j}\omega, \varepsilon}^j-\mathcal L_{\sigma^{-j}\omega}^j)[(X_{\beta(\sigma^{-(j+1)}\omega)} N_{\beta(\sigma^{-(j+1)}\omega)}(\mathcal L_{\sigma^{-l}\omega}^{l-1-j}(1)))']\|_{L^1(m)}\le C_{\alpha, \underline{\alpha}} n|\varepsilon|,
	\]
	for each $1\le j\le n$. Next,
	\begin{equation}\label{ap5}
		\begin{split}
			&\bigg \|\sum_{j=0}^n \delta (\sigma^{-(j+1)}\omega)\mathcal L_{\sigma^{-j}\omega}^j ((X_{\beta(\sigma^{-(j+1)}\omega)} N_{\beta(\sigma^{-(j+1)}\omega)}(\mathcal L_{\sigma^{-l}\omega}^{l-1-j}(1)))'- \\
			&\phantom{=}-(X_{\beta(\sigma^{-(j+1)}\omega)} N_{\beta(\sigma^{-(j+1)}\omega)} (h(\sigma^{-(j+1)}\omega)))')\bigg \|_{L^1(m)}\\
			&\le \sum_{j=0}^n \|(X_{\beta(\sigma^{-(j+1)}\omega)} N_{\beta(\sigma^{-(j+1)}\omega)}(\mathcal L_{\sigma^{-l}\omega}^{l-1-j}(1)))'- (X_{\beta(\sigma^{-(j+1)}\omega)} N_{\beta(\sigma^{-(j+1)}\omega)} (h(\sigma^{-(j+1)}\omega)))'\|_{L^1(m)}\\
			&\le n|\varepsilon|=O(|\varepsilon|^{1-\alpha}), 
		\end{split}
	\end{equation}
	where we used (see~\cite[Remark 21]{DGTS}) that (by making $l$ sufficiently large)
	\[
	\|(X_{\beta(\sigma^{-(j+1)}\omega)} N_{\beta(\sigma^{-(j+1)}\omega)}(\mathcal L_{\sigma^{-l}\omega}^{l-1-j}(1)))'- (X_{\beta(\sigma^{-(j+1)}\omega)} N_{\beta(\sigma^{-(j+1)}\omega)} (h(\sigma^{-(j+1)}\omega)))'\|_{L^1(m)} \le |\varepsilon|,
	\]
	for $0\le j \le n$. The conclusion of the theorem follows by putting \eqref{ap1}-\eqref{ap5} together.

\end{proof}

\end{document}